\documentclass[11pt,a4]{article} 
\usepackage[hmargin=3cm,vmargin=2cm]{geometry}

\usepackage{mathtools}						
\usepackage{enumerate}						
\usepackage{amsthm,amsmath,amsfonts,amsthm,amssymb}
\usepackage[all]{xy}
\usepackage{hyperref}													
\makeatletter														
\newcommand{\labitem}[2]{%
\def\@itemlabel{(\text{#1})}												
\item 															
\def\@currentlabel{#1}\label{#2}} 											
\makeatother 														

\usepackage{booktabs} 
\usepackage{array} 
\usepackage{paralist} 
\usepackage{verbatim} 
\usepackage{subfig} 

\usepackage[mathscr]{euscript}						 	%
\usepackage{dsfont}
\usepackage{upgreek}

\def\mbb{\mathbb}

\def\mscr{\mathscr}
\def\mcal{\mathcal}
\def\mfrak{\mathfrak}

\title{On the transfer congruence between~$p$-adic Hecke~$L$-functions}
\author{Dohyeong Kim}

\def\Q{\mathds{Q}}
\def\R{\mathds{R}}
\def\Z{\mathds{Z}}

\def\C{\mathds{C}}

\def\Gal{\mathrm{Gal}}

\def\GL{\mathrm{GL}}
\def\SO{\mathrm{SO}}
\def\det{\mathrm{det}}

\def\adele{\mathds A}

\def\bs{\boldsymbol}
\def\vol{\mathrm{Vol}}
\def\val{\mathrm{val}}
\def\rec{\mathrm{rec}}
\def\ver{\mathrm{ver}}


\def\Eul{\mathrm{Eul}}


\numberwithin{equation}{subsection}

\theoremstyle{plain}
\newtheorem{theorem}[equation]{Theorem}
\newtheorem{lemma}[equation]{Lemma}
\newtheorem{proposition}[equation]{Proposition}
\newtheorem{corollary}[equation]{Corollary}

\theoremstyle{remark}
\newtheorem{remark}[equation]{Remark}

\theoremstyle{definition}
\newtheorem{definition}[equation]{Definition}


\begin{document}

\maketitle

\begin{abstract}
We prove the transfer congruence between $p$-adic Hecke $L$-functions for CM fields over cyclotomic extensions, which is a non-abelian generalization of the Kummer's congruence. The ingredients of the proof include the comparison between Hilbert modular varieties, the $q$-expansion principle, and some modification of Hsieh's Whittaker model for Katz' Eisenstein series. As a first application, we prove explicit congruence between special values of Hasse-Weil $L$-function of a CM elliptic curve twisted by Artin representations. As a second application, we prove the existence of a non-commutative $p$-adic $L$-function in the algebraic $K_1$-group of the completed localized Iwasawa algebra.
\end{abstract}

\tableofcontents

\section{Introduction}
One of the major themes of number theory is the relationship between the critical values of~$L$-functions of motives and the arithmetic of these motives. Iwasawa theory asserts, among many other things, that there is an equality between the two, provided we vary both in suitable~$p$-adic families. A precise formulation of such an equality is called the main conjecture of Iwasawa theory.
\par
In traditional Iwasawa theory one works with a prime~$p$ which is ordinary for the given motive, and an abelian~$p$-adic Lie extension~$H_\infty$ of a number field~$H$. Let~$\mcal G$ be the Galois group~$\Gal(H_\infty / H)$. Given a motivic~$L$-function, the suitably normalized critical values of its twist by characters of~$\mcal G$ are expected to admit a~$p$-adic analytic interpolation, and a~$p$-adic~$L$-function is a~$p$-adic analytic function defined on a certain domain that achieves this desired interpolation. The Iwasawa main conjecture in this situation is then the assertion that such~$p$-adic~$L$-function is equal to a suitably defined characteristic element of the Selmer group of the motive over~$H_\infty$.
\par
Non-commutative Iwasawa theory proposed by \cite{CFKSV, Fukaya-Kato} generalizes traditional Iwasawa theory in order to enable one to work with a possibly non-commutative~$p$-adic Galois group~$\mcal G$. In this case, a characteristic element of the Selmer group of the motive over~$H_\infty$ lies in the algebraic~$K_1$ group of a suitably localized version of the Iwasawa algebra of~$\mcal G$. Hence we expect the~$p$-adic~$L$-function to be inside the same~$K_1$-group, and, at the same time, to interpolate suitably normalized~$L$-values. We call such~$p$-adic~$L$-function commutative (resp. non-commutative) if~$\mcal G$ is commutative (resp. non-commutative). Although we have a plenty of examples of commutative~$p$-adic~$L$-functions, no non-commutative examples have yet been proven to exist for motives other than the Tate motive. For the Tate motive and a totally real~$H_\infty$, Kakde and Ritter-Weiss independently established the existence of the non-commutative~$p$-adic~$L$-function and proved the associated main conjecture, assuming the vanishing of certain~$\mu$-invariants. Their result is unconditional when the base field~$H$ is abelian over~$\Q$, since the theorem of Ferrero-Wanshington proves the vanishing of~$\mu$-invariants for such fields.
\par
One of the key aspects of the work of Kakde and Ritter-Weiss is to formulate and prove non-commutative generalizations of the Kummer congruence, following an idea of Kato \cite{Kato}. Specifically, they list a set of congruences between commutative~$p$-adic~$L$-functions implied by the existence of a non-commutative~$p$-adic~$L$-function, and show that in certain cases those congruence in turn imply the existence of a non-commutative~$p$-adic~$L$-function. These predicted congruences between commutative~$p$-adic~$L$-functions are what we shall call non-commutative congruences.
\par
The principal result of the present paper is Theorem~\ref{thm:main}, where we prove the transfer congruence between commutative~$p$-adic Hecke~$L$-functions, which is one of the non-commutative congruences. By~$p$-adic Hecke~$L$-function, we mean the~$p$-adic~$L$-function interpolating Hecke~$L$-values of~$p$-ordinary CM fields which was first constructed by Katz, and later studied by Hida-Tilouine and Hsieh. Our method heavily depends on the work of Hsieh, in which the relevant Eisenstein series are constructed using Whittaker models. This is a key difference between Hsieh's work and the earlier ones. The theory of Whittaker models reduces the computation of Fourier expansion to purely local problems, and our proof crucially depends on this aspect of Hsieh's work. Furthermore, this is not only a reconstruction of the same Eisenstein series, but the resulting~$p$-adic~$L$-functions in fact differ by certain factor which is not always a~$p$-adic unit. It seems to us that Hsieh's method leads to the right measure for the main conjecture. This difference is crucial for us as this factor has a large~$p$-adic valuation for the cases that are studied in the current paper, while it is a~$p$-adic unit in the earlier work \cite{Hsieh nonvanishing} of Hsieh. We explain more about this in Subsection~\ref{ss:0308}.
\par
As a consequence of the transfer congruence, we will prove some results on the existence of non-commutative~$p$-adic~$L$-functions for some~$p$-adic Galois groups~$\mcal G$, assuming certain~$\mu$-invariants vanish. The argument is almost identical to \cite{Kakde 11}, but our result is conditional since we do not know the validity of the assumption on the vanishing of~$\mu$-invariants. Such a group~$\mcal G$ shall be of the form
\begin{align}\label{eq:FT}
\mcal G = \lim_{\leftarrow r} \Gal(\mscr M(\mu_{p^r}, m^{1/p^r})/ \mscr M)
\end{align}
where~$\mscr M$ is an imaginary quadratic field in which~$p$ splits,~$\mu_{p^r}$ is the group of~$p^r$-th roots of unity, and~$m$ is an integer. To avoid the trivial cases in which~$\mcal G$ becomes abelian, we will always assume that~$m$ is a~$p$-th power free integer, and~$m$ is neither~$0,1,$ or~$-1$. This type of extension is often called a false Tate curve extension. Unfortunately, even with the hypothesis on the vanishing of~$\mu$-invariants, we need to consider the completion of the localized Iwasawa algebra and take its~$K_1$-group, in order to produce a non-commutative~$p$-adic~$L$-function in it. If we assume the commutative main conjectures in addition, then our method will produce the non-commutative~$p$-adic~$L$-function which satisfy the main conjecture. For the details, see Section~\ref{s5}.
\par
We briefly outline the contents of the paper. In Section~\ref{s2}, we review the theory of Hilbert modular Shimura varieties and Hilbert modular forms. It is important to choose a good diagonal embedding between Hilbert modular Shimura varieties, so that we have controls over the CM points and the cusps. In Section~\ref{s3}, we review the work of Hsieh. Although it is clear from Hsieh's computation, it was not explicitly pointed out in his papers that in certain cases it is possible to improve the~$p$-integrality of~$p$-adic~$L$-function constructed by Katz and Hida-Tilouine. We explain how to modify Hsieh's construction to obtain a~$p$-integrally optimal measure. Also, we reformulate the work of Hsieh in the language of~$\Lambda$-adic forms. Although we do not use any deep result about~$\Lambda$-adic forms, the approach using~$\Lambda$-adic forms is more convenient when we prove the transfer congruence. Section~\ref{s4} contains the proof of the transfer congruences under the hypotheses \eqref{P} and \eqref{C}, and proof of these hypotheses for the~$p$-ordinary CM fields of the form~$\Q(\sqrt{-D},\mu_{p^r})$ with~$r\ge 1$,~$D<0$. In Section~\ref{s5}, we explain the consequences on the existence of non-commutative~$p$-adic~$L$-functions and the associated non-commutative Iwasawa main conjecture, as well as the congruence between special values of $L$-functions in concrete terms.


\section{Hilbert modular forms and~$q$-expansion principle}\label{s2}
In this section, we review Hilbert modular forms,~$\Lambda$-adic forms, and the~$q$-expansion principle. Also, we specify CM points on the Hilbert Shimura variety following \cite{Hsieh mu}, and study the behavior of those CM points under the diagonal embedding.

\subsection{Hilbert modular forms}
Let~$\Q$ be the field of rational numbers with a fixed algebraic closure~$\overline \Q$. By a number field, we mean a subfield of~$\overline \Q$ which is a finite extension of~$\Q$. We write the ring of adeles associated to a number field~$H$ by~$\adele_{H}$, and use the convention that if~$\bullet$ is a set of places of~$H$, a place of~$H$, or a product of these two, then~$\adele_H^\bullet$ denotes the ring of adeles whose component at the place dividing~$\bullet$ equals~$1$. For example,~$\adele_H^\infty$ denotes the ring of finite adeles. Also, if~$g$ is an element of~$\GL_2(\adele_H)$ then we use similar convention for~$g^\bullet$, and~$g_\bullet$ denotes the product of all~$v$-component of~$g$ for~$v$ dividing~$\bullet$.
\par
Let~$\mscr F$ be a totally real field, and we consider the algebraic group~$\GL_2/\mscr F$. Let~$I$ be the set of embedding of~$\mscr F$ into the field of real numbers~$\R$. Let~$\mfrak H$ be the complex upper half plane, and let
\begin{align}\label{eq:2}
X_+ =\mfrak H^I= \{(\tau_\sigma)_{\sigma\in I}| \tau_\sigma \in \mfrak H \}.
\end{align}
Let~$\GL_2(\R)_+$ be the subgroup of~$\GL_2(\R)$ whose elements have positive determinant. Then~$\GL_2(\R)_+$ acts on~$\mfrak H$ by the linear fractional transformation
\begin{align}
\begin{bmatrix}
a 	& 	b
\\
c 	& 	d
\end{bmatrix}
\cdot \tau = \frac{a\tau+b}{c\tau+d},\hspace{5mm}
\begin{bmatrix}
a 	& 	b
\\
c 	& 	d
\end{bmatrix}
\in \GL_2(\R)_+,\,\,\tau\in\mfrak H.
\end{align}
Using the identification~$\mscr F \otimes _\Q \R = \R^I$, we identify~$\GL_2(\mscr F \otimes _\Q \R)=\GL_2(\R)^I$, and define~$\GL_2(\mscr F \otimes _\Q \R)_+=\GL_2(\R)_+^I$. In particular,~$\GL_2(\R)_+^I$ acts on~$X_+$ by componentwise linear fractional transformation. Let~$\Z[I]$ be the free abelian group generated by~$I$. A typical element~$m\in \Z[I]$ is written as~$\sum_{\sigma \in I} m_\sigma \sigma$. We often write~$I = \sum_{\sigma \in I} \sigma$. For~$g \in \GL_2(\R)$ and~$\tau \in \mfrak H$, we put
\begin{align}
J(g, \tau) = c\tau+d.
\hspace{5mm}
g=\begin{bmatrix}
a 	& 	b
\\
c 	& 	d
\end{bmatrix}
\in \GL_2(\R),\,\,\tau \in\mfrak H,
\end{align}
and for~$g_\infty \in \GL_2(\R)^I$ and~$\tau \in X_+$, then we put
\begin{align}
J(g_\infty ,\tau)^{m} = \prod_{\sigma \in I} J(g_\sigma, \tau_\sigma)^{m_\sigma}.
\end{align}
\par
Now we define the space of Hilbert modular forms. Let~$k$ be an integer and~$K$ be an open compact subgroup of~$\GL_2(\adele_\mscr F^\infty)$. Let~$\C$ denote the field of complex numbers. Let~$\GL_2(\mscr F)_+$ be the subgroup of~$\GL_2(\mscr F)$ which consists of the elements with totally positive determinant. Then~$\GL_2(\mscr F)_+$ acts on~$X_+ \times \GL_2(\adele^\infty_\mscr F)$ from the left,~$\GL_2(\adele^\infty_\mscr F)$ acts on it from the right, and the two actions commute. Precisely speaking, for~$\alpha \in \GL_2(\mscr F)_+$,~$u \in \GL_2(\adele_\mscr F^\infty)$, and~$(z,g^\infty) \in X_+ \times \GL_2(\adele^\infty_\mscr F)$, we define the action to be~$\alpha(z, g^\infty ) u = (\alpha z, \alpha^\infty g^\infty u)$. 
\begin{definition}
Define~$M_k(K,\C)$ to be the space of functions~$f\colon X_+ \times \GL_2(\adele^\infty_\mscr F) \to \C$ such that
\begin{enumerate}
\item for each~$g^\infty \in \GL_2(\adele_\mscr F^\infty)$,~$f(\,\cdot\,, g^\infty) \colon X_+ \to \C$ is holomorphic,
\item for each~$\alpha \in \GL_2(\mscr F)_+$ and~$u \in K$,~$f$ satisfies
\begin{align}
f(\alpha(z, g^\infty ) u ) = J(\alpha, z)^{kI} f(z,g^\infty).
\end{align}
\end{enumerate}
When~$\mscr F=\Q$, we add a growth condition at cusps in the definition of~$M_k(K,\C)$. The growth condition is automatically satisfied if~$\mscr F \not = \Q$ by the Koecher principle, which is our main interest in the current article.
\end{definition}
\subsection{$q$-expansion of Hilbert modular forms}\label{ss:22}
We first introduce some notation. For a number field~$H$, let~$\mscr O_H$ be the ring of integers of~$H$, and~$\mfrak d_H$ the absolute different of~$H$. For our fixed totally real field~$\mscr F$, we simply write~$\mscr O = \mscr O_\mscr F$ and~$\mfrak d = \mfrak d_\mscr F$. For a nonzero fractional ideal~$\mfrak a$ of~$\mscr F$, we write~$\mfrak a^* = \mfrak a^{-1} \mfrak d^{-1}$. We consider the algebraic group~$\GL_2/\mscr F$, and for any~$\mscr F$-algebra~$R$ we denote by~$\GL_2(R)$ the group of invertible~$2\times 2$ matrices over~$R$. Let~$V=\mscr F \oplus \mscr F$ be the two dimensional vector space over~$\mscr F$, which we regard as the space of row vectors of length two. For an~$\mscr F$-algebra~$R$, we let~$V_R=V\otimes_\mscr F R$, on which~$\GL_2(R)$ acts by multiplication from the right. We also define an action of~$\GL_2(R)$ on~$V_R$ from the left by
\begin{align}\label{eq:07}
g*x = x \cdot g^{-1}\det (g)
\end{align}
for~$g \in \GL_2(R)$ and~$x \in V_R$. The reason for defining the above left action in this way originates from Subsection~2.4. of \cite{Hsieh mu}, where the moduli problem for the Shimura variety is described using this left action.
\par
Let~$e_1=(1,0)$ and~$e_2=(0,1)$ be vectors in~$V$ so that~$V = \mscr F e_1 \oplus \mscr F e_2$. We denote by~$L$ the canonical lattice~$\mscr O e_1 \oplus \mscr O^* e_2$ of~$V$. For a positive integer~$N$, we define
\begin{align}
U(N) :=\{g^\infty \in \GL_2(\adele_\mscr F^\infty ) | g^\infty L \subset L,\,\, g \equiv 1 \hspace {3mm} \text{(mod~$N\cdot L$)}\}.
\end{align}
Let~$K$ be an open compact subgroup of~$\GL_2(\adele_\mscr F^\infty)$ which is of the form
\begin{align}
K = \prod_{v<\infty} K_v
\end{align}
where~$v$ runs over the set of finite places of~$\mscr F$ and~$K_v$ is an open compact subgroup of~$\GL_2(\mscr F_v)$. Define
\begin{align}
K_p = \prod_{v| p } K_v
\end{align}
to be the product of~$K_v$ with~$v|p$.
We will always assume that~$K \supset U(N)$ for some~$N$, and that~$K_p$ is equal to~$K_p^0$ which we define as
\begin{align}
K_p^0 := \{g_p \in \GL_2(\mscr F_p)| g_p * (L \otimes _\mscr O \mscr O_p )=L \otimes _\mscr O \mscr O_p  \}.
\end{align}
For such an open compact subgroup~$K \subset \GL_2(\adele_\mscr F^\infty)$ we define
\begin{align}\label{eq:10}
K_1^n = \{g^\infty \in K | g_p \equiv 
\begin{bmatrix}
1	&	*
\\
0	&	1
\end{bmatrix}
\hspace{5mm}\text{(mod~$p^n$)} \}.
\end{align}

Let~$\mfrak c$ be a non-zero fractional ideal of~$\mscr F$ prime to~$pN$. For each place~$v$ of~$\mscr F$ prime to~$pN$, choose~$\bs c_v \in \mscr F_v$ such that~$\bs c_v \mscr O _v= \mfrak c _v$. For~$v\mid pN$, let~$\bs c_v = 1$. Write~$\bs c = \prod_v \bs c_v \in \adele_\mscr F^\infty$. For a fractional ideal~$\mfrak a$ of~$\mscr F$, denote by~$\mfrak a_+$ the subset of~$\mfrak a$ consisting of totally positive elements. For any ring~$R$ and a finitely generated semigroup~$L\subset \mscr F$, we write~$R[[q^{L}]]$ for the set of formal power series
\begin{align}\label{eq:225}
\sum_{\beta \in L_{\ge 0 }} a_\beta q^\beta
\end{align}
where~$L_{\ge 0 }=\{0\}\cup (L \cap \mscr F_+)$. We view~$R[[q^{L}]]$ as a ring of formal power series, where the rule of associative and distributive multiplication is characterized  by~$q^{\beta_1} \cdot q^{\beta_2}= q^{\beta_1+\beta_2}$ for~$\beta_1,\beta_2\in \mscr F_+$. However, there is an exception to this, where we interpret~$q^\beta$ as a function. Given a holomorphic Hilbert modular form~$f \in M_k(K,\C)$, we consider the restriction of~$f$ to the component~$X_+ \times
\begin{bsmallmatrix}
1 	&	0
\\
0		&	\bs c^{-1}
\end{bsmallmatrix}
$, written as~$f|_{(\mscr O, \mfrak c^{-1})}$. Then~$f|_{(\mscr O, \mfrak c^{-1})}$ has Fourier expansion
\begin{align}\label{eq11}
f|_{(\mscr O, \mfrak c^{-1})}(q) =
\sum_{\beta\in (N\mfrak c)_{\ge 0}^{-1}} a_\beta(f, \mfrak c) q^\beta
\end{align}
where we interpret~$q^\beta$ as a function on~$X_+$ defined by
\begin{align}
q^\beta\colon \tau \mapsto \mathrm{exp}(\sum_{\sigma \in I} \sigma (\beta) \tau_\sigma 2\pi \sqrt{-1}).
\end{align}
The coefficients~$a_\beta(f, \mfrak c)$ is independent of the choices for~$\bs c_v$, and \eqref{eq11} is called~$q$-expansion of~$f$ at the cusp~$(\mscr O, \mfrak c^{-1})$. 

\par
By the~$q$-expansion principle, for a subring~$R$ of~$\C$, we may define
\begin{align}\label{eq:228}
M_k(\mfrak c, K, R) := \{f \in M_k(K ,\C)| f|_{(\mscr O, \mfrak c^{-1})} \in R[[q^{(N\mfrak c)^{-1}}]]\}.
\end{align}

\par
Although this interpretation of~$q^\beta$'s as variables in the Fourier expansion is important for our later computation, we keep viewing~$R[[q^L]]$ as a ring of formal power series because such an interpretation is not available when we consider~$R[[q^L]]$ for a ring~$R$ that is not contained in~$\C$.

\subsection{$\Lambda$-adic Hilbert modular forms}\label{ss:pmf}
We fix an algebraic closure~$\overline \Q_p$ of~$\Q_p$ and let~$\C_p$ be the completion of~$\overline \Q_p$. Let~$\mcal I$ be the ring of Witt vectors of an algebraic closure of the finite field of~$p$ element, and we fix an embedding of~$\mcal I$ into~$\C_p$. We will always regard~$\mcal I$ as a subring of~$\C_p$. Also, we fix embeddings~$i_\infty \colon \overline \Q \hookrightarrow \C$ and~$i_p \colon \overline \Q \hookrightarrow \C$. 
\par
We say that a ring~$R$ is a~$p$-adic ring if the natural map~$R \to \lim R/p^nR$ is an isomorphism, where the inverse limit is taken over the set of all positive integers~$n$. Of course, a~$p$-adic ring~$R$ is a~$\Z_p$-algebra. For a fractional ideal~$\mfrak c$ of~$\mscr F$ prime to~$pN$, we would like to define the space of~$R$-adic Hilbert modular forms which we will denote by~$V(\mfrak c, K ; R)$. Consider a formal power series
\begin{align}
f = \sum_{\beta \in (N\mfrak c)^{-1}_{\ge 0}}  a_\beta (f) q^\beta \in R[[q^{(N\mfrak c)^{-1}}]].
\end{align}
For any~$\Z_p$-algebra homomorphism
\begin{align}
\alpha \colon R \to \C_p
\end{align}
define
\begin{align}
\alpha (f) = \sum_{\beta \in (N\mfrak c)^{-1}_{\ge 0}} \alpha ( a_\beta(f))q^\beta \in \C_p[[q^{(N\mfrak c)^{-1}}]].
\end{align}
We say a family~$Y$ consisting of~$\Z_p$-algebra homomorphisms~$\alpha\colon R \to \C_p$ is Zariski dense if an element~$\mcal M \in R$ is determined by~$\alpha(R)$ for~$\alpha \in Y$.
\begin{definition}\label{def:pmf}
We say~$f$ is an~$R$-adic Hilbert modular form, or simply an~$R$-adic modular form, if there is a Zariski dense set~$Y$ for~$R$ such that for every~$\alpha \in Y$ we have
\begin{align}
\alpha (f) = \sum_{\beta \in (N\mfrak c)^{-1}_{\ge 0}} i_p(a_\beta(g_\alpha))q^\beta
\end{align}
for some element~$g_\alpha$ of~$M_k(\mfrak c, K_1^n, \overline \Q)$, where~$K_1^n$ is an open compact subgroup of~$\GL_2(\adele^\infty_{\mscr F})$ defined in \eqref{eq:10}. Here~$k$ and~$n$ may vary with~$\alpha$. The space of~$R$-adic modular form is denoted by~$V_0(\mfrak c, K; R)$.
\end{definition}
\begin{remark}\label{rmk:235}
When we refer to a~$p$-adic modular form, we mean an~$R$-adic modular form with an unspecified~$p$-adic ring~$R$. Since we will not use the symbol~$p$ to denote a ring, it does not cause any confusion. The definition for a~$p$-adic modular form in the style of Definition~\ref{def:pmf} is due to Hida. This definition is sufficient for our purpose for a while, but we will need a stronger notion of a~$p$-adic modular form in Section~\ref{ss:44}, which is due to Katz. See Subsection~\ref{ss:42} for the comparison of two notions.
\end{remark}
\par
The main example of such an~$R$ for us is certain Iwasawa algebra which we now explain. Let~$\mscr M$ be a quadratic totally imaginary field extension of~$\mscr F$. We will always assume that every place of~$\mscr F$ lying over~$p$ splits in~$\mscr M$. Such number field~$\mscr M$ is called~$p$-ordinary. We choose an integral ideal~$\mfrak C$ of~$\mscr M$ which is prime to~$p$, and consider the ray class field~$\mscr M(p^n\mfrak C)$ of~$\mscr M$ modulo~$p^n\mfrak C$ for all~$n\ge 0$. We define
\begin{align}\label{eq:2.3.7}
Z = Z(\mfrak C) = \lim_{\infty\leftarrow n} \Gal(\mscr M(p^n \mfrak C) / \mscr M),
\end{align}
and consider the Iwasawa algebra
\begin{align}\label{eq:20}
\Lambda = \Lambda(Z) = \lim_{\leftarrow} \mcal I  [Z / Y]
\end{align}
where the inverse limit is taken over the set of all open normal subgroups~$Y$ of~$Z$. We will be considering~$\Lambda$-adic Eisenstein series lying in~$V_0(\mfrak c, K; \Lambda)$, where~$\Lambda$ is given by \eqref{eq:20}.


\subsection{CM points}\label{ss:cmp}
In this subsection, we will introduce the notion of CM points in~$\GL_2(\adele_\mscr F)$ and the level structure on them, where~$\mscr F$ is a totally real field and~$\adele_\mscr F$ is the ring of adeles of~$\mscr F$. Instead of delving into the most general definition of a CM point, we will specify the CM points which will be used later. Roughly speaking, choosing a CM point amounts to choosing a suitable totally imaginary quadratic extension~$\mscr M$ of~$\mscr F$ and an element~$\delta \in \mscr M$ satisfying certain conditions which is listed below, and choosing a level structure amounts to making some explicit choice of a base for each place of~$\mscr F$. When we specify the CM points and their level structure, we will follow the convention of \cite{Hsieh mu} closely. However, we note, in contrast to the previous works on~$p$-adic Hecke~$L$-functions, including \cite{Hsieh mu}, that the proof of our main result will rely on some particular choices of~$\delta$.
\par
We begin specifying the CM points. Fix an odd prime~$p$. Let~$\mscr M$ be a totally imaginary quadratic extension of~$\mscr F$. We say that~$\mscr M$ is~$p$-ordinary if every place of~$\mscr F$ dividing~$p$ splits in~$\mscr M$. Let~$c$ be the complex conjugation, which also defines a canonical nontrivial involution~$\Gal(\mscr M/\mscr F)$. A CM type of~$\mscr M$ is a set of embeddings~$\sigma\colon \mscr M \to \C$ such that the restricting~$\sigma$ to~$\mscr F$ induces a bijection~$\Sigma \cong I$, where we recall that~$I$ was defined to be the set of embedding of~$\mscr F$ into~$\R$. We define the~$p$-adic CM type~$\Sigma_p$ associated to~$\Sigma$ to be the set of places induced by~$i_p \circ i_\infty^{-1}\circ \sigma$ for~$\sigma \in \Sigma$. From the definition of~$p$-ordinary CM fields, it follows that the restriction induces a bijection between~$\Sigma_p$ and the set of places of~$\mscr F$ lying above~$p$. Let~$\mscr R$ be the ring of integers of~$\mscr M$, and let~$\mfrak d$ be the absolute different of~$\mscr F$. We choose an element~$\delta_\mscr M \in \mscr M$ which is often written as~$\delta$ when a reference to~$\mscr M$ is unimportant, satisfying the following three conditions:-
\begin{enumerate}
\labitem{pol-1}{pol-1} For each~$\sigma \in \Sigma$, we have~$\mathrm{Im}(\sigma(\delta))>0$.
\labitem{pol-2}{pol-2} For each~$\sigma \in \Sigma$, we have~$\delta^c=-\delta$.
\labitem{pol-3}{pol-3} Let~$\mathrm{Tr}_{\mscr M/\mscr F}\colon \mscr M \to \mscr F$ be the trace map. Then the alternating pairing~$$(x,y)\mapsto 
\mathrm{Tr}_{\mscr M /\mscr F}
\left(
\frac
{xy^c}{2\delta}
\right)$$
defines an isomorphism~$\mscr R\wedge_\mscr O \mscr R \tilde\to (\mfrak c \mfrak d)^{-1}$ for an integral ideal~$\mfrak c\subset \mscr O$ that is prime to~$p$.
\end{enumerate}
The above three conditions are called polarization conditions, as suggested by the labels. For such~$\delta$, we let~$D=-\delta^2$ which is a totally positive element in~$\mscr F$. Of course,~$D$ depends on the choice of~$\delta$, but we often suppress it from the notation.
\par
We remark that in the literature the choice of~$\delta$ is often referred to as the choice of polarization and the resulting~$\mfrak c$ is called the polarization ideal. We note that having a polarization ideal which is prime to~$p$ is crucial for establishing the~$q$-expansion principle for~$p$-adic modular forms on which the proof of our main theorem is based on. On the other hand, it will be clear from the computations that we will carry out why we need to impose the first two conditions on~$\delta$. See the paragraph after \eqref{P} in Subsection~\ref{ss:rad} for the choice of~$\delta$ we make relative to a smaller CM field~$\mscr M'$ of~$\mscr M$.
\par
From now, we explain what we call the level structure. Let~$V$ be the two dimensional vector space~$\mscr F\oplus \mscr F$ over~$\mscr F$, viewed as the set of row vectors of length two with entries in~$\mscr F$. In particular, an element~$x$ in~$V$ is written as a row vector~$(a,b)$ with~$a,b\in \mscr F$. Given~$\delta$, an element~$y\in \mscr M$ can be written uniquely in the form~$a\delta + b$, and we introduce an isomorphism~$q_\delta\colon \mscr M \tilde\longrightarrow V$ defined by
\begin{align}
q_\delta\colon a\delta + b\mapsto (a,b).
\end{align}
Of course, it would suffice to choose any~$\delta$ in~$\mscr M - \mscr F$ in order to conclude that~$q_\delta$ is an isomorphism, and the above  sophisticated conditions on~$\delta$ are not used here. In any case, for our later purpose, we record the dependence of the isomorphism~$q_\delta$ on the choice of~$\delta$ in the following lemma.
\begin{lemma}\label{lem:lem1}
Let~$u$ be an element in~$\mscr F$ and~$\tilde \delta$ be an element in~$\mscr M-\mscr F$. We have 
\begin{align}
q_{u\tilde \delta}(a\tilde\delta+b)
\cdot
\begin{bmatrix}
u	&	0\\
0		&	1
\end{bmatrix}
 =q_{\tilde\delta}(a\tilde\delta +b).
\end{align}
\end{lemma}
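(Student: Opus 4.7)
The plan is to prove the lemma by a direct calculation, unpacking the definition of $q_\delta$ on both sides. The whole content of the lemma is that rescaling $\tilde\delta$ by $u\in\mscr F$ changes only the first coordinate of the row vector representation, in a way that is inverted by right multiplication by $\mathrm{diag}(u,1)$.

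First I would rewrite $a\tilde\delta+b$ as an $\mscr F$-linear combination of $u\tilde\delta$ and $1$, namely
\begin{align*}
a\tilde\delta+b \;=\; (a/u)\cdot(u\tilde\delta) + b,
\end{align*}
which is legitimate since $u\in\mscr F^\times$ (note: one needs $u\ne 0$, which is implicit because $u\tilde\delta\in\mscr M-\mscr F$ is required for $q_{u\tilde\delta}$ to be defined; I would flag this small point but not belabor it). Applying the definition of $q_{u\tilde\delta}$ gives $q_{u\tilde\delta}(a\tilde\delta+b)=(a/u,\,b)$.

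Next I would carry out the right multiplication
\begin{align*}
(a/u,\,b)\cdot\begin{bmatrix}u & 0 \\ 0 & 1\end{bmatrix} \;=\; (a,\,b),
\end{align*}
and observe that the right-hand side is $q_{\tilde\delta}(a\tilde\delta+b)$ by definition. Equating these yields the claimed identity.

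There is no real obstacle here; the lemma is a bookkeeping statement that records how the chosen trivialization $q_\delta$ transforms under rescaling of the auxiliary element $\delta$, so that later, when we need to change the polarization element from one CM field to another, the effect on the associated row-vector coordinates can be absorbed into a right action by a diagonal matrix in $\GL_2(\mscr F)$. The only thing worth emphasizing in writing up is the bilinearity used implicitly in the first step, and that $u\ne 0$ is forced by the hypothesis $u\tilde\delta\in\mscr M-\mscr F$.
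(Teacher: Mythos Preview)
Your proof is correct and follows essentially the same direct computation as the paper: rewrite $a\tilde\delta+b=(a/u)(u\tilde\delta)+b$, apply the definition of $q_{u\tilde\delta}$ to get $(a/u,b)$, then right-multiply by $\mathrm{diag}(u,1)$ to recover $(a,b)=q_{\tilde\delta}(a\tilde\delta+b)$. Your explicit remark that $u\ne 0$ is forced by the hypothesis is a nice touch the paper leaves implicit.
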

\begin{proof}
The proof is a straightforward calculation. We have
\begin{align}
q_{u\tilde\delta}(a\tilde\delta+b)
&= q_{u\tilde\delta}(au^{-1}u\tilde\delta+b)
\\
&= au^{-1}(1,0) + b(0,1).
\end{align}
Multiplying~$\begin{bmatrix}
u	&	0\\
0		&	1
\end{bmatrix}$ on the both sides from the right yields
\begin{align}
q_{u\tilde\delta}(a\tilde\delta+b) \cdot 
\begin{bmatrix}
u	&	0\\
0		&	1
\end{bmatrix}
=a(1,0) + b(0,1),
\end{align}
whose right hand side equals~$q_{\tilde\delta}(a\tilde\delta +b)$ by the definition of~$q_{\tilde\delta}$. The proof of the lemma is complete.
\end{proof}
Let~$M(2,\mscr F)$ be the algebra of~$2 \times 2$ matrices and denote by~$\varrho$ the embedding
\begin{align}\label{eq:26}
\varrho \colon \mscr M \hookrightarrow M(2,\mscr F),\,\,\,\varrho\colon a\delta+b\mapsto \begin{bmatrix}b&-Da\\a&b\end{bmatrix}.
\end{align}
When we need to emphsize the dependence of~$\varrho$ on the choice of~$\delta$, we will write~$\varrho_\delta$ instead of~$\varrho$. Clearly, the first two conditions imposed on~$\delta$ are indispendable in order to have~$\varrho$, while the third condition is irrelevant here.
\par
Let~$\mfrak C$ be an integral ideal of~$\mscr R$ which is prime to~$p$. For an ideal~$\mfrak A$ of~$\mscr M$, denote by~$\mfrak A^c$ the complex conjugate of~$\mfrak A$. We decompose
\begin{align}\label{eq:b1}
\mfrak C= \mfrak F \mfrak F_c \mfrak I
\end{align}
for ideals~$\mfrak F$,~$\mfrak F_c$, and~$\mfrak I$ of~$\mscr M$ so that~$\mfrak F$ is a product of prime ideals split over~$\mscr F$,~$\mfrak F^c \subset \mfrak F_c$, and~$\mfrak I$ is a product of prime ideals ramified or inert over~$\mscr F$. Such a decomposition is unique. Also, we write
\begin{align}
\mfrak C^+ := \mfrak F \mfrak F_c
\end{align}
and
\begin{align}
\mfrak D : = p \mfrak C D_{\mscr M / \mscr F}
\end{align}
where~$D_{\mscr M / \mscr F}$ is the relative discriminant for~$\mscr M/ \mscr F$. For a finite place~$v$ of~$\mscr F$, we define
\begin{align}
\mscr R_v = \mscr R \otimes _\mscr O \mscr O_v.
\end{align}
We are going to choose certain~$\mscr O_v$-basis for~$\mscr R_v$, which will be used to define the level structure. We divide it into several cases. 
\par
First, let~$v$ be a place of~$\mscr F$ which divides~$p\mfrak F \mfrak F_c$. Then,~$v$ decomposes in~$\mscr M$ into two places in~$\mscr M$. We define~$w$ to be the one of the two such that~$w\in \Sigma_p$ or~$w|\mfrak F$, recalling that~$\Sigma_p$ was defined in the beginning of this subsection to be the set of places above~$p$ corresponding to~$\Sigma$. We define~$\overline w$ to be the unique place of~$\mscr M$ which is different from~$w$ and lies over~$v$. In particular,~$\mscr R_v$ is canonically isomorphic to~$\mscr R_w \oplus \mscr R_{\overline w}$, which we view as an identification without further indication. Let~$e_w$ (resp.~$e_{\overline w})$ be the idempotent in~$\mscr R_w \oplus \mscr R_{\overline w}$ corresponding to~$w$ (resp.~$\overline w$). Then~$\{e_w,e_{\overline w}\}$ is an~$\mscr O_v$-basis for~$\mscr R_v$.
\par
Let~$v$ be a place of~$\mscr F$ which does not divide~$p\mfrak C$ and split in~$\mscr M$. As in the previous case,~$v$ splits into two places~$w$ and~$\overline w$ in~$\mscr M$. The difference is that we do not have a preferred choice for~$w$. We identify~$\mscr R_v$ with~$\mscr R_w \oplus \mscr R_{\overline w}$, and define~$e_w$ and~$e_{\overline w}$ as idempotents, as in the previous case.
\par
Next we consider a place~$v$ of~$\mscr F$ which is either inert or ramified in~$\mscr M$. In any case, there is a unique place~$w$ of~$\mscr M$ which lies above~$v$, and~$\mscr R_v$ is canonically identified with~$\mscr R_w$. We explain how we choose a basis~$\{1,\bs \theta_v \}$ for~$\mscr R_v$ as an~$\mscr O_v$-module. When~$v$ is ramified in~$\mscr M$, we take~$\{1, \bs \theta_v \}$ to be an~$\mscr O_v$-basis of~$\mscr R_w$ with~$\bs \theta_v$ being a uniformizer in~$\mscr R_w$. We always require~$\bs \theta_v$ to satisfy~$\bs \theta_v^c=-\bs \theta_v$ if~$v\nmid 2$. If we put~$t_v = \bs \theta_v+ \bs \theta_v^c$, then by our choice~$t_v=0$ for~$v\nmid 2$ but possibly non-zero if~$v|2$. If~$v$ is inert in~$\mscr R$ and does not divide~$2$, then we take~$\{1,\bs \theta_v\}$ to be an~$\mscr O_v$-basis of~$\mscr R_v$ such that~$\bs \theta_v^c=-\bs \theta_v$. If~$v$ is inert in~$\mscr R$ and~$v$ divides~$2$, then we relax the condition~$\bs \theta_v^c=-\bs \theta_v$.
\par
We need to make one more choice before we choose the level structure. Write~$\widehat{\Z}$ for the profinite completion of~$\Z$ and write~$\widehat{\mscr O}=\mscr O \otimes_\Z\widehat{\Z}$. We fix a finite idele~$d=d_\mscr F=(d_{\mscr F_v})_v \in \adele_\mscr F^{\infty,\times}$ such that
\begin{align}\label{eq:2.4.12}
d \widehat{ \mscr O} \cap \mscr F  = \mfrak d,
\end{align}
where~$\mfrak d$ is the absolute different of~$\mscr F$. We further assume that
\begin{align}\label{eq:2.4.13}
d_{\mscr F_v} = -2\delta_w
\end{align}
if~$v$ divides~$\mfrak F \Sigma_p$. Note that \eqref{pol-3} ensures that \eqref{eq:2.4.13} does not conflict with the condition \eqref{eq:2.4.12} we imposed on~$d$.
\par
We will now choose an~$\mscr O_v$-basis~$\{e_{1,v},e_{2,v}\}$ for~$\mscr R_v$ for each finite place~$v$ of~$\mscr F$ which we call the level structure. Recall that for an ideal~$\mfrak a \subset \mscr O$, we write~$\mfrak a^* = \mfrak a^{-1}\mfrak d^{-1}$. For a finite place~$v$ of~$\mscr F$ such that~$v\nmid p\frak C\mfrak C^c$, we want to choose~$\{e_{1,v},e_{2,v}\}$ so that~$\mscr R_v = \mscr O_v e_{1,v} \oplus \mscr O_v^* e_{2,v}$. In order to achieve this, we define
\begin{align}\label{eq:e1}
e_{1,v}=
\begin{cases}
e_{\overline w}\text{ if~$v$ splits in~$\mscr M$}
\\
\bs \theta_v\text{ if~$v$ is inert or ramified in~$\mscr M$}
\end{cases}
\end{align}
and
\begin{align}\label{eq:e2}
e_{2,v}=
\begin{cases}
d_{{\mscr F_v}}\cdot e_{w}\text{ if~$v$ splits in~$\mscr M$}
\\
d_{{\mscr F_v}}\cdot 1\text{ if~$v$ is inert or ramified in~$\mscr M$}.
\end{cases}
\end{align}
\par
Having fixed the level structure, we give the definition of the CM points in the rest of this subsection. Recall that we defined~$V$ to be the space of row vectors of length two over~$\mscr F$, and we define~$e_1=(1,0)$ and~$e_2=(0,1)$. Also recall that the choice of~$\delta$ gives rise to an isomorphism~$q_\delta$ from~$\mscr M$ to~$V$.
For any place~$v$ of~$\mscr F$, let~$q_{\delta,v}$ be the isomorphism~$\mscr M_v \tilde \rightarrow V \otimes _\mscr O \mscr O_v$ extending~$q_\delta$. Recall that we defined in \eqref{eq:07} the left action of~$\GL_2(R)$ on the row vectors, written as~$*$. If~$v$ is a finite place, then we denote by~$\varsigma_v$ the element in~$\GL_2({\mscr F_v})$ which is characterized by
\begin{align}\label{eq:varsigma}
\varsigma_v * e_i  = q_{\delta,v}({e_{i,v}}),\,\,i=1,2.
\end{align}
Unfolding the definitions, for~$v | \mfrak D$,~$\varsigma_v$ is simply
\begin{align}
\varsigma_v =
\begin{cases}
\begin{bmatrix}
d_{\mscr F_v}	& \frac{-t_v}{2} \\
0			& d_{\mscr F_v}^{-1}
\end{bmatrix}
\text{ if }v|D_{\mscr M/\mscr F} \mfrak C^-
\vspace{3mm}
\\
\begin{bmatrix}
-\delta_w	& -\frac 1 2 \\
1			& \frac{-1}{2\delta_w}
\end{bmatrix}
\text{ if }v|p\mfrak C^+\text{ and } w|\Sigma_p\mfrak F.
\end{cases}
\end{align}
\begin{remark}\label{rmk:2.4.18}
Suppose that~$\mscr M$ contains an imaginary quadratic field~$\mscr M_0$. Let~$D_0$ be the positive squarefree integer such that~$\mscr M_0 = \Q(\sqrt{-D_0})$. If~$2$ is split in~$\mscr M_0$, then all~$t_v$ can be chosen to be zero. If~$D_0$ is congruent to~$1$ modulo~$4$, then we choose
\begin{align}
\bs \theta_v =\frac{1+\sqrt{D_0}}{2}
\end{align}
for each place~$v$ of~$\mscr F$ dividing~$2$. In particular, we have~$t_v=1$ for those~$v$. If~$2$ divides~$D_0$, then we choose~$\bs \theta_v = \sqrt{-D_0}$, in which case~$t_v=0$.
\end{remark}
\begin{remark}\label{rmk:2.4.19}
Suppose that we are given a degree~$p$ extension of~$\mscr M/\mscr M$ CM fields, and let~$v'$ be a place of~$\mscr F'$. For each place~$v$ of~$\mscr F$ lying over~$v'$, we can choose~$\bs\theta_v$ to be the image of~$\bs\theta_{v'}$.
\end{remark}
For an infinite place~$v$ of~$\mscr F$, let~$\sigma \in \Sigma$ be the unique element in~$\Sigma$ extending~$v$ and define~$\varsigma_v$ to be 
\begin{align}
\varsigma_v = \begin{bmatrix}
\mathrm{Im}(\sigma(\delta)) & 0
\\
0&1
\end{bmatrix}.
\end{align}
We define
\begin{align}
\varsigma = \prod_v \varsigma_v  \in \GL_2(\adele_\mscr F),\,\,\,\text{and}\,\,\,
\varsigma_f= \prod_{v<\infty}\varsigma_v \in \GL_2(\adele_\mscr F^\infty).
\end{align}
\par
Recall that~$\mfrak H$ denotes the upper half plance and that we defined~$X_+$ to be~$\mfrak H^I$ in \eqref{eq:2}. We can naturally identify~$\mscr M \otimes _\Q \R$ with~$\C^\Sigma$, and further identify~$X_+$ as the subset of~$\C^\Sigma$ which consists of~$\Sigma$-tuple of complex numbers~$(\tau_\sigma)_{\sigma \in \Sigma}$ such that for every~$\sigma$ the imaginary part of~$\tau_\sigma$ is positive. In particulare, if we denote by~$\delta_\Sigma$ the~$\Sigma$-tuple~$(\sigma(\delta))_{\sigma\in\Sigma}$, then~$\delta_\Sigma$ is viewed as an element of~$X_+$.
We finally define the CM point associated to a finite idele~$a \in \adele_{\mscr F}^{(pN),\times}$ to be
\begin{align}\label{eq:cm}
x(a)= \left(\delta_\Sigma,\varrho(a)\varsigma_f\right) \in X_+ \times \GL_2(\adele_\mscr F^\infty).
\end{align}

\subsection{Restriction along the diagonal}\label{ss:rad}
The purpose of this subsection is to define a map which we call the diagonal map, associated to a degree~$p$ extension~$\mscr F/\mscr F'$ of totally real fields, and to study the effect of this diagonal map on the level structure which we defined in the previous subsection. As before,~$I$ denotes the set of embeedings of~$\mscr F$ into~$\R$, and similarly define~$I'$ to be the set of embeddings of~$\mscr F'$ into~$\R$. We similarly denote by~$X_+'$ the~$I'$-tuple of complex upper half plane, which is again identified as a subset of~$\C^{I'}$ as in the previous subsection. Then embedding~$\mscr F'\to \mscr F$ induces an embedding~$X_+' \to X_+$. This is called as the diagonal map in \cite{Kakde 11}, but let us call it a naive diagonal map in order to avoid the conflict with our terminology. In \cite{Kakde 11}, using the naive diagonal map, Kakde proved various non-commutative Kummer congruences for~$p$-adic Dedekind~$L$-function associated to totally real fields. However, we need to modify the naive diagonal map for our purposes because of the two reasons. Firstly, we are working with adeles instead of just upper half planes, so if we were to use the naive diagonal embedding, we need to make a non-canonical identification between adelic automorphic forms and classical automorphic forms on upper half planes. It would be technically undesirable if our computation relies on adelic techniques and we have to keep track of the non-canonical identifications that we have made. The second reason is due to the role of level structure in our work, which was irrelevant in \cite{Kakde 11}; what we need is not only some sort of diagonal map, but also the compatibility between the diagonal map and the level structure we choose. 
\par
We now explain the diagonal map in our sense. Let~$\mscr F/\mscr F'$ be a degree~$p$ extension of totally real fields and let~$\mscr M'/\mscr F'$ be a quadratic CM extension such that every prime of~$\mscr F'$ lying over~$p$ splits in~$\mscr M'$. Let~$\mscr M$ be the compositum of~$\mscr M'$ and~$\mscr F$. For a CM type~$\Sigma'$ of~$\mscr M'$, we denote by~$\Sigma$ the CM type on~$\mscr M$ induce by~$\Sigma$. It is defined to be
\begin{align}
\Sigma:=\{\sigma| \sigma\colon \mscr M \hookrightarrow \C,\,\, \sigma|_{\mscr M'}\in\Sigma'\}.
\end{align}
We introduce a condition:-
\begin{enumerate}
\labitem {P}{P} The relative different~$\mfrak d_{\mscr F/\mscr F'}$ can be generated by an element~$d_{\mscr F/\mscr F'}\in \mscr F$, which is totally positive.
\end{enumerate}
Suppose \eqref{P} holds, and let~$d_{\mscr F/\mscr F'}$ be a generator of~$\mfrak d_{\mscr F/\mscr F'}$ which is totally positive. For a given level structure associated to~$\delta' \in \mscr M'$, we define~$\delta$ to be~$d_{\mscr F/\mscr F'}\delta'~$. Then~$\delta$ induces a level structure for~$\mscr M$, and we want to compare the two level structures in terms of the diagonal embedding~$\widetilde\Delta$, which we now define. Let~$I'$ be the set of embeddings of~$\mscr F'$ into~$\R$ and let~$X_+'$ be the product of~$I'$ copies of complex upper half planes. Then we define the diagonal map, denoted by~$\widetilde\Delta_{\mscr F'}^{\mscr F}$, as
\begin{align}\label{eq:res}
\widetilde\Delta_{\mscr F'}^{\mscr F} \colon X_+' \times \GL_2(\adele_{\mscr F'}^\infty ) &\rightarrow X_+ \times \GL_2(\adele_\mscr F ^\infty) \\
(z',h'^\infty)
&\mapsto
\begin{bmatrix}
d_{\mscr F/\mscr F'}	&	0 \\
0				&	1
\end{bmatrix}
\cdot(z', h'^\infty)\cdot
\begin{bmatrix}
1	&	0 \\
0	&	d_{\mscr F/\mscr F'}^\infty
\end{bmatrix}^{-1}.
\end{align}
Precisely speaking, our map~$\widetilde\Delta_{\mscr F'}^{\mscr F}$ depends on~$d_{\mscr F/\mscr F'}$, but we suppress~$d_{\mscr F/\mscr F'}$ in order to ease the notation. When it is clear what~$\mscr F$ and~$\mscr F'$ are, we often write~$\widetilde\Delta$ instead of~$\widetilde\Delta_{\mscr F'}^{\mscr F}$.
\par
Denote by~$\mscr O'$ (resp.~$\mscr R'$) the ring of integers of~$\mscr F'$ (resp.~$\mscr M'$). Let~$\mfrak d'$ be the absolute different ideal of~$\mscr F'$. Suppose that we are given~$\delta'\in\mscr M'$ such that
\begin{align}
(x',y')\mapsto \mathrm{Tr}_{\mscr M'/\mscr F'}\left(\frac{x'{y'}^c}{2\delta'}\right)
\end{align}
induces an isomorphism~$\mscr R' \wedge _{\mscr O'} \mscr R' \tilde \longrightarrow (\mfrak c' \mfrak d')^{-1}$ for an integral ideal~$\mfrak c'\subset \mscr O'$ prime to~$p$. Define~$\delta = \delta'd_{\mscr F/\mscr F'}$ and~$\mfrak c = (\mfrak c' \mscr O)\cdot \mfrak d_{\mscr F/\mscr F'}$. Then
\begin{align}
(x,y)\mapsto \mathrm{Tr}_{\mscr M/\mscr F}\left(\frac{xy^c}{2\delta}\right)
\end{align}
induces an isomorphism~$\mscr R \wedge _{\mscr O} \mscr R \tilde \longrightarrow (\mfrak c \mfrak d)^{-1}$. We fix a finite idele~$d_{\mscr F'} \in \adele_{\mscr F'}^\infty$ with~$d_{\mscr F'}\widehat{ \mscr O}'\cap {\mscr F}'=\mfrak d'$, and define~$d_\mscr F=d_{\mscr F'}d_{\mscr F/\mscr F'}$. Then we have~$d_{\mscr F}\widehat{ \mscr O}\cap {\mscr F}=\mfrak d$. Using the choice of~$\delta'$, we fix~$\mscr O'_{v'}$-basis~$(e_{1.v'},e_{2,v'})$ for~$\mscr R'_{v'}$, for each finite place~$v'$ of~$\mscr F'$, satisfying the conditions \eqref{eq:e1} and \eqref{eq:e2}. Viewing~$\mscr R' \subset \mscr R$ via the embedding induced by~$\mscr M'\subset \mscr M$, we put 
\begin{align}\label{eq:d1}
e_{1,v}=e_{1,v'} \text{ and }e_{2,v}=d_{\mscr F/\mscr F'} e_{2,v'}.
\end{align}
Then~$(e_{1,v},e_{2,v})$ satisfies the conditions described in the previous subsection for~$\mscr M/\mscr F$. After all, we have two embeddings
\begin{align}
\varrho &\colon\adele_\mscr M ^{\infty pN,\times} \longrightarrow \GL_2(\adele_\mscr F^\infty )\\
\varrho'&\colon  \adele_{\mscr M'} ^{\infty pN,\times} \longrightarrow \GL_2(\adele_{\mscr F'}^\infty ).
\end{align}
\begin{proposition}Assume \eqref{P} and we make the choices as above. We have
\begin{align}
x(1) = \widetilde\Delta_{\mscr F'}^{\mscr F}\left( x'(1) \right),
\end{align}
where~$x(1)$ (resp.~$x'(1)$) is the CM point on~$X_+\times\GL_2(\adele_\mscr F^\infty )$ (resp. on~$X_+'\times\GL_2(\adele_{\mscr F'}^\infty)$) given by \eqref{eq:cm} with respect to our choice of~$\delta$ and~$\varsigma$(resp.~$\delta'$ and~$\varsigma'$).
\end{proposition}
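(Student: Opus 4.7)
The plan is to unwind both sides of the claimed equality from the definitions and then compare them componentwise. First, since $\varrho(1) = \varrho'(1) = \begin{bmatrix}1&0\\0&1\end{bmatrix}$ by~\eqref{eq:26}, the CM points reduce to $x(1) = (\delta_\Sigma, \varsigma_f)$ and $x'(1) = (\delta'_{\Sigma'}, \varsigma'_f)$. Writing $\alpha = \begin{bmatrix}d_{\mscr F/\mscr F'}&0\\0&1\end{bmatrix}$ and $\beta = \begin{bmatrix}1&0\\0&d_{\mscr F/\mscr F'}\end{bmatrix}$, I would split the claim into two substatements: first, that $\alpha$, which lies in $\GL_2(\mscr F)_+$ thanks to the total positivity in~\eqref{P}, carries the naive diagonal image of $\delta'_{\Sigma'}$ in $X_+$ to $\delta_\Sigma$; and second, that $\alpha \cdot \varsigma'_f \cdot \beta^{-1} = \varsigma_f$ in $\GL_2(\adele_\mscr F^\infty)$, where $\varsigma'_f$ is viewed through the natural inclusion $\GL_2(\adele_{\mscr F'}^\infty) \hookrightarrow \GL_2(\adele_\mscr F^\infty)$.

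The archimedean statement is immediate: the naive embedding carries $\delta'_{\Sigma'}$ to the $\Sigma$-tuple $(\sigma(\delta'))_\sigma$, and componentwise linear fractional transformation by $\alpha$ yields $(\sigma(d_{\mscr F/\mscr F'})\sigma(\delta'))_\sigma = (\sigma(\delta))_\sigma = \delta_\Sigma$. Total positivity of $d_{\mscr F/\mscr F'}$ is precisely what places $\alpha$ in $\GL_2(\mscr F)_+$ and what propagates the polarization condition~\eqref{pol-1} from $\delta'$ to $\delta$.

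For the finite part I would work place by place. At a finite place $v$ of $\mscr F$ lying over $v'$ of $\mscr F'$, the task is to verify $\alpha_v \varsigma'_{v'} \beta_v^{-1} = \varsigma_v$ in $\GL_2(\mscr F_v)$, where $\varsigma'_{v'}$ is viewed there via $\mscr F'_{v'}\hookrightarrow\mscr F_v$. A matrix is determined by its $*$-action on $\{e_1, e_2\}$, and since $\det(\alpha_v)\det(\beta_v^{-1}) = 1$, the formula $g*x = x\cdot g^{-1}\det(g)$ unfolds to
\begin{align*}
(\alpha_v\varsigma'_{v'}\beta_v^{-1})*e_i = \bigl(\varsigma'_{v'}*(e_i\cdot\beta_v)\bigr)\cdot \alpha_v^{-1}.
\end{align*}
Using $e_1\beta_v = e_1$, $e_2\beta_v = d_{\mscr F/\mscr F',v}\cdot e_2$, and the defining identity $\varsigma'_{v'}*e_i = q_{\delta',v'}(e_{i,v'})$, the right-hand side becomes $q_{\delta',v'}(e_{1,v'})\cdot\alpha_v^{-1}$ for $i=1$ and $d_{\mscr F/\mscr F',v}\cdot q_{\delta',v'}(e_{2,v'})\cdot\alpha_v^{-1}$ for $i=2$. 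On the $\varsigma_v$ side, the compatibility~\eqref{eq:d1} together with the $\mscr F_v$-linearity of $q_{\delta,v}$ gives $\varsigma_v * e_1 = q_{\delta,v}(e_{1,v'})$ and $\varsigma_v * e_2 = d_{\mscr F/\mscr F',v}\cdot q_{\delta,v}(e_{2,v'})$. Applying Lemma~\ref{lem:lem1} with $u = d_{\mscr F/\mscr F',v}$ and $\tilde\delta = \delta'$ converts each occurrence of $q_{\delta,v}$ into $q_{\delta',v'}$ at the cost of right multiplication by $\begin{bmatrix}d_{\mscr F/\mscr F',v}^{-1}&0\\0&1\end{bmatrix} = \alpha_v^{-1}$, at which point the two expressions coincide.

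I do not expect a mathematical obstacle here; the main difficulty is clerical, namely keeping straight the distinction between the right action of $\GL_2$ on row vectors and the left $*$-action, the embedding $\mscr F'_{v'}\hookrightarrow\mscr F_v$ at a place $v\mid v'$, and the precise role of~\eqref{P}, which enters only to place $\alpha$ in $\GL_2(\mscr F)_+$ and to transport the polarization conditions from $\delta'$ to $\delta$ so that the level structure on the $\mscr M$-side is well defined.
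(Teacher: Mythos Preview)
Your proposal is correct and follows essentially the same approach as the paper: reduce to the archimedean and finite components, then at each finite place verify the matrix identity via the $*$-action on $e_1,e_2$, invoking the defining property~\eqref{eq:varsigma}, the compatibility~\eqref{eq:d1}, and Lemma~\ref{lem:lem1}. The only cosmetic difference is that the paper starts from $\xi_v*e_i$ and manipulates it forward to $q_{\delta,v}(e_{i,v})$, whereas you compute both sides in terms of $q_{\delta',v'}(e_{i,v'})\cdot\alpha_v^{-1}$ and match them; the paper also makes the inclusion $j\colon V'_{v'}\hookrightarrow V_v$ explicit where you leave it implicit, but you flag this bookkeeping point yourself.
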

\begin{proof}
The proposition can be written as
\begin{align}\label{eq:res1}
\widetilde\Delta_{\mscr F'}^{\mscr F}\left( \delta'^{\Sigma'},\varsigma'^\infty\right) = \left(\delta^\Sigma, \varsigma^\infty \right).
\end{align}
By the definition \eqref{eq:res} of~$\widetilde\Delta_{\mscr F'}^\mscr F$, the left hand side equals
\begin{align}\label{eq:res2}
\widetilde\Delta_{\mscr F'}^{\mscr F}\left( \delta'^{\Sigma'},\varsigma'^\infty\right) = \left(\left(d_{\mscr F/\mscr F'}\delta'\right)^\Sigma,
\begin{bmatrix}
d_{\mscr F/\mscr F'}	&	0	\\
0				&	1
\end{bmatrix}
\cdot \varsigma '^\infty\cdot
\begin{bmatrix}
1		&		0	\\
0		&		d_{\mscr F/\mscr F'}^\infty
\end{bmatrix}^{-1}
\right)
\end{align}
As we defined~$\delta = d_{\mscr F/\mscr F'}\delta'$, the equality in the first component of \eqref{eq:res1} holds. Now we verify the equality in the second component of \eqref{eq:res1}. Denote by~$\xi$ the second component of the right hand side of \eqref{eq:res2}, and by~$\xi_v$ the~$v$-component of~$\xi$ for a finite place~$v$ of~$\mscr F$. Let~$v'$ be the restriction of~$v$ to~$\mscr F'$. Although we often identify~$\GL_2(\adele_{\mscr F'}^\infty)$ as a subset of~$\GL_2(\adele_\mscr F^\infty)$, let us denote the natural embedding by~$j\colon \GL_2(\mscr F'_{v'}) \hookrightarrow \GL_2(\mscr F_{v})$, in order to emphasize the ambient set. Similarly, we slightly abuse notation by writing~$j\colon V'\hookrightarrow V$. In particular,~$j(e_i')=e_i$. By the defining property \eqref{eq:varsigma} of~$\varsigma$, it suffices to prove that for each finite place~$v$ of~$\mscr F$,
\begin{align}
\xi_v * e_i = q_{\delta,v}(e_{i,v}),\,\,i=1,2.
\end{align}
For~$i=1$,
\begin{align}
\xi_v * e_1 &=
\left(
\begin{bmatrix}
d_{\mscr F/\mscr F'}	&	0	\\
0				&	1
\end{bmatrix}
\cdot j(\varsigma '_{v'})\cdot
\begin{bmatrix}
1		&		0	\\
0		&		d_{\mscr F/\mscr F'}
\end{bmatrix}^{-1}
\right)
*e_1
\\
&=
\left(
\begin{bmatrix}
d_{\mscr F/\mscr F'}	&	0	\\
0				&	1
\end{bmatrix}
\cdot j(\varsigma '_{v'})
\right)
*(d_{\mscr F/\mscr F'})^{-1}e_1
\\ \label{eq:a1}
&=
\begin{bmatrix}
d_{\mscr F/\mscr F'}	&	0	\\
0				&	1
\end{bmatrix}
*(d_{\mscr F/\mscr F'})^{-1}j\left(q_{\delta',v'}({e_{1,v'}})\right)
\\ \label{eq:a2}
&=
\begin{bmatrix}
d_{\mscr F/\mscr F'}	&	0	\\
0				&	1
\end{bmatrix}
*(d_{\mscr F/\mscr F'})^{-1}q_{\delta'd_{\mscr F/\mscr F'},v}({e_{1,v}})
\cdot
\begin{bmatrix}
d_{\mscr F/\mscr F'}		&	0\\
0				&	1
\end{bmatrix}
\\
&=
\begin{bmatrix}
d_{\mscr F/\mscr F'}	&	0	\\
0				&	1
\end{bmatrix}
*(d_{\mscr F/\mscr F'})^{-1}q_{\delta,v}({e_{1,v}})
\cdot
\begin{bmatrix}
d_{\mscr F/\mscr F'}		&	0\\
0				&	1
\end{bmatrix}
\\
&=
(d_{\mscr F/\mscr F'})^{-1}q_{\delta,v}({e_{1,v}})
\cdot
\begin{bmatrix}
d_{\mscr F/\mscr F'}		&	0\\
0				&	1
\end{bmatrix}
\cdot
\begin{bmatrix}
1	&	0	\\
0	&	d_{\mscr F/\mscr F'}
\end{bmatrix}
\\
&=
q_{\delta,v}(e_{1,v}).
\end{align}
From \eqref{eq:a1} to \eqref{eq:a2}, we used Lemma~\ref{lem:lem1} and~$j(e_{1,v'})=e_{1,v}$.
For~$i=2$,
\begin{align}
\xi_v * e_2 &=
\left(
\begin{bmatrix}
d_{\mscr F/\mscr F'}	&	0	\\
0				&	1
\end{bmatrix}
\cdot j(\varsigma '_{v'})\cdot
\begin{bmatrix}
1		&		0	\\
0		&		d_{\mscr F/\mscr F'}
\end{bmatrix}^{-1}
\right)
*e_2
\\
&=
\left(
\begin{bmatrix}
d_{\mscr F/\mscr F'}	&	0	\\
0				&	1
\end{bmatrix}
\cdot j(\varsigma '_{v'})
\right)
*e_2
\\ \label{eq:a3}
&=
\begin{bmatrix}
d_{\mscr F/\mscr F'}	&	0	\\
0				&	1
\end{bmatrix}
* j(q_{\delta',v'}(e_{2,v'}))
\\ \label{eq:a4}
&=
\begin{bmatrix}
d_{\mscr F/\mscr F'}	&	0	\\
0				&	1
\end{bmatrix}
* q_{\delta,v}(j(e_{2,v'})) \cdot 
\begin{bmatrix}
d_{\mscr F/\mscr F'}	&	0	\\
0				&	1
\end{bmatrix}
\\
&=
q_{\delta,v}(j(e_{2,v'})) \cdot 
\begin{bmatrix}
d_{\mscr F/\mscr F'}	&	0	\\
0				&	1
\end{bmatrix}
\cdot
\begin{bmatrix}
1	&	0	\\
0	&	d_{\mscr F/\mscr F'}
\end{bmatrix}
\\ \label{eq:a5}
&=
d_{\mscr F/\mscr F'}q_{\delta,v}(j(e_{2,v'})) 
\\ \label{eq:a6}
&=
q_{\delta,v}(e_{2,v})).
\end{align}
From \eqref{eq:a3} to \eqref{eq:a4}, we used Lemma~\ref{lem:lem1}, and from From \eqref{eq:a5} to \eqref{eq:a6} we used \eqref{eq:d1}, the definition of~$e_{2,v}$. The proof of the proposition is complete.
\end{proof}


\section{Toric Eisenstein series and~$p$-adic~$L$-functions}\label{s3}
In this section, we define the Whittaker model of certain adelic Eisenstein series, the associated~$p$-adic Eisenstein measure, and the construction of the~$p$-adic~$L$-function. In the construction of the Whittaker model, we rather closely follow \cite{Hsieh mu,Hsieh nonvanishing} except the two minor deviations we make. Firstly, we allow imprimitive characters. That is, we fix an ideal~$\mfrak C$ of a CM field~$\mscr M$, and we will consider an algebraic Hecke character~$\chi$ of~$\mscr M$ such that the prime-to-$p$ part of the conductor of~$\chi$ merely divides~$\mfrak C$, instead of being equal. In \cite{Hsieh mu, Hsieh nonvanishing},~$\mfrak C$ was assumed to be of exact conductor of~$\chi$, or the primitive characters are used. It forces us to slightly change the choice of local sections for inert or ramified places. It also makes some formulae to appear in this subsection look different from theirs. Secondly, our notation is occasionally different from that used in \cite{Hsieh mu,Hsieh nonvanishing}. When it comes to the construction of the~$p$-adic measure, we will adopt the language of~$p$-adic modular forms with coefficients in certain Iwasawa algebra, while Hsieh used the language of~$p$-adic measures. Of course, the correspondence between Iwasawa algebra and the space of~$p$-adic measures will identify two languages with little difficulty, but the author believes that our later discussions become simpler in terms of Iwasawa algebras.
\par
As in the previous sections, we consider a~$p$-ordinary CM field~$\mscr M$ with maximal real subfield~$\mscr F$ and a CM type~$\Sigma$ and associated~$p$-adic CM type~$\Sigma_p$. Let~$\chi$ be a continuous group homomorphism
\begin{align}
\chi \colon \adele_\mscr M^\times \rightarrow \C^\times
\end{align}
such that the kernel~$\chi$ contains~$\mscr M$ embedded diagonally in~$\adele_{\mscr M}^\times$. Let~$\Z[\Sigma\cup c\Sigma]$ be the free abelian group generated by~$\Sigma \cup c \Sigma$, and let~$\Z[\Sigma]$ be the free abelian group generated by~$\Sigma$.
\begin{definition}
The continuous group homomorphism~$\chi$ as above is called an algebraic Hecke character if there exists an integer~$k$ and~$m \in \Z[\Sigma]$ with the following properties. For every~$z_\infty=(z_\sigma)_{\sigma\in\Sigma} \in \left({\C^\times}\right)^\Sigma$, 
\begin{align}
\chi(z_\infty) = \prod_{\sigma\in\Sigma} z_\sigma^k\left( \frac{z_\sigma}{z_\sigma^c} \right) ^{m_\sigma}
\end{align}
holds, where~$m$ is understood as~$\sum_{\sigma} m_\sigma \sigma$ and~$m_\sigma$ is an integer. We call 
\begin{align}
k\Sigma +(1-c)m \in \Z[\Sigma\cup c\Sigma]
\end{align}
the infinity type of~$\chi$.
\end{definition}
For an algebraic Hecke character~$\chi$ of~$\mscr M$, we define
\begin{align}
\chi_+ = \chi |_{\adele_{\mscr F}^\times},
\end{align}
to be the restriction of~$\chi$ to~$\adele_{\mscr F}^\times$.
\par
We identify~$\GL_2(\mscr F_\infty)$ with~$\prod_{\sigma \in \Sigma}\GL_2(\R)$ by sending~$\left[\begin{smallmatrix}a\otimes a'&b\otimes b'\\c\otimes c' &d\otimes d'\end{smallmatrix}\right]$ to~$\left(\left[\begin{smallmatrix}\sigma(a)a'&\sigma(b)b'\\\sigma(c)c' &\sigma(d)d' \end{smallmatrix} \right] \right)_{\sigma \in \Sigma }$, and define the subgroup~$U_\infty\subset \GL_2(\mscr F_\infty)$ to be the maximal compact subgroup corresponding to~$\prod_{\sigma\in\Sigma}\SO_2(\R)$. We denote by~$B$ the subgroup of~$\GL_2$ consisting of upper triangular matrices. Recall that a function~$\phi \colon \GL_2(\adele_\mscr F)\rightarrow \C$ is called~$U_\infty$-finite if the~$\C$-linear span of~$\{u_\infty \phi | u_\infty \in U_\infty\}$ in the space of the continuous functions from~$\GL_2(\adele_\mscr F)$ to~$\C$ is finite dimensional.
\par
For~$s \in \C$, we define~$I(s,\chi_+)$ to be the space consisting of smooth~$U_\infty$-finite functions~$\phi \colon \GL_2(\adele_\mscr F) \rightarrow \C$ satisfying
\begin{align}\label{eq:3.0.31}
\phi \left( \begin{bmatrix}a&b\\0&d\end{bmatrix}g\right) = \chi_+^{-1}(d) \left|\frac{a}{d}\right|_{\adele_{\mscr F}}^s\phi(g)
\end{align}
for every~$\left[\begin{smallmatrix}a&b\\0&d\end{smallmatrix}\right] \in B(\adele_{\mscr F})$ and every~$g \in \GL_2(\adele_{\mscr F})$. We call the functions in~$I(s,\chi_+)$ as sections. For each section~$\phi \in I(s,\chi_+)$, we define the adelic Eisenstein series associated to~$\phi$ by
\begin{align}\label{eq:3.0.34}
E_\adele(g,\phi) = \sum_{\gamma \in B(\mscr F)\backslash \GL_2(\mscr F)}\phi(\gamma g),
\end{align}
whenever the series is absolutely convergent. It is well known that~$E_\adele(g,\phi)$ is absolutely convergent when the real part of~$s$ is sufficiently large, and it has meromorphic continuation to the entire complex plane.
\subsection{Normalization of epsilon factors}
We make explicit the normalization of epsilon factors, which we use throughout the paper. Let~$\psi_\Q$ be the additive character
\begin{align}
\psi_\Q \colon \adele/\Q \to \C^\times
\end{align}
such that~$\psi_\Q=\prod_v \psi_{\Q_v}$ with each~$\psi_{\Q_v}$ specified by the following rules. For~$v=\infty$, we put
\begin{align}
\psi_{\R} (x_\infty) = \exp(-2\pi i x_\infty).
\end{align}
For a finite place~$v=\ell$, let~$(\Q/\Z)[\ell^\infty]$ be the~$\ell$-primary subgroup of~$\Q/\Z$. Denote by~$\langle x_\ell \rangle$ the image of~$x_\ell$ under the canonical homomorphism
\begin{align}
\Q_\ell \to \Q_\ell /\Z_\ell \tilde\rightarrow (\Q/\Z) [\ell^\infty] \hookrightarrow \Q/\Z.
\end{align}
We may view~$\langle x_\ell \rangle$ as the fractional part of~$x_\ell$. Then, we define
\begin{align}
\psi_{\Q_\ell} (x_\ell) = \exp ( 2\pi i \langle x_\ell \rangle ).
\end{align}
For a finite extension~$K$ of~$\Q_v$, where~$v$ may be infinite or finite, we define
\begin{align}
\psi_K = \psi_{\Q_v} \circ \mathrm{Tr}_{K/ \Q_v}
\end{align}
where~$\mathrm{Tr}_{K/\Q_v}$ denotes the trace map~$K\to\Q_v~$. For a number field~$H$, define
\begin{align}
\psi_H = \psi_\Q \circ \mathrm{Tr}_{H/ \Q}
\end{align}
where~$\mathrm{Tr}_{H/ \Q}$ denotes the trace map~$\adele_H \to \adele_\Q$. In other words, if~$v$ is a place of~$H$, then~$\psi_H$ can be written as
\begin{align}
\psi_H = \prod_{v}\psi_{H_v}
\end{align}
where~$v$ runs over the set of all places of~$H$. When it is clear what~$H$ is, we simply write~$\psi$ and~$\psi_v$ instead of~$\psi$ and~$\psi_{H_v}$, respectively. For a non-archimedean local field~$K$ with valuation ring~$\mscr O_K$, we write~$\mathfrak d _K$ for the absolute different, and choose a generator~$d_K$ for~$\mathfrak d_K$. For a continuous character~$\chi_K \colon K^\times \rightarrow \C^\times$, let~$e_K(\chi_K)$ denote the exponent of the conductor of~$\chi_K$. In other words,~$e_K(\chi_K)=0$ if~$\chi_K$ is trivial on~$\mscr O_K^\times$, and otherwise it is the smallest integer~$n$ for which~$\chi_K$ is trivial on~$1 + \mathfrak m_K^n$, where~$\mathfrak m_K$ is the maximal ideal of~$\mscr O_K$. We define the epsilon factor as
\begin{align}
\epsilon (s, \chi_K, \psi_K) =  |c|_K^s \int_{c^{-1}\mscr O_K} \chi_K^{-1}(x) \psi_K(s) d_Kx
\end{align}
where~$c=d_K \varpi_K^{e_K(\chi_K)}$, and~$d_Kx$ is the Haar measure on~$K$ which is self-dual with respect to~$\psi_K$. According to our normalization of~$\psi_K$ 
\begin{align}
\mathrm{Vol}(\mscr O_K, d_Kx) = |\mscr O_K / \mfrak d_K |^{-\frac 1 2 }
\end{align}
where~$|\mscr O_K / \mfrak d_K |$ is the number of elements in~$\mscr O_K / \mfrak d_K~$. The local root number~$W(\chi_K)$ is defined as 
\begin{align}
W(\chi_K) = \epsilon ( \frac 1 2 , \chi_K, \psi_K).
\end{align}
We refer the reader to \cite{Tate epsilon factor} for general facts about epsilon factors. It is well known that
\begin{align}
|W(\chi_K)|_\C=1
\end{align}
if~$\chi_K$ is unitary.
\begin{remark}
Note that the self-dual Haar measure~$d_Kx$ is different from the Haar measure we use for the period integral of Eisenstein series which is normalized so that the volume of~$\mscr O_K$ is always one.
\end{remark}
\subsection{Fourier coefficients of Eisenstein series}\label{subsection:FC}
In this subsection, we review the computation of Fourier coefficients of an Eisenstein series. We keep the notation introduced in the beginning of the current section. Let~$\mathbf w$ be the matrix~$ \left[\begin{smallmatrix}0&-1\\1&0\end{smallmatrix}\right]$. For a place~$v$ of~$\mscr F$, we define~$I_v(s,\chi_+)$, the space of local sections, to be the space consisting of the Bruhat-Schwartz functions~$\phi_v\colon \GL_2({\mscr F_v})\rightarrow \C$ satisfying
\begin{align}
\phi_v\left(\begin{bmatrix}a&b\\0&d\end{bmatrix}g\right) = \chi_{+,v}^{-1}\left(d\right)\left|\frac{a}{d}\right|_v^s\phi_v(g)
\end{align}
for every~$\left[\begin{smallmatrix}a&b\\0&d\end{smallmatrix}\right] \in B({\mscr F_v})$ and~$g \in \GL(2,{\mscr F_v})$. For a non-archimedean place~$v$, a function~$\phi_v\colon \GL_2(\mscr F_v)$ is called Bruhat-Schwartz if it is locally constant and compactly supported. For an archimedean place~$v$, such~$\phi_v$ is called Bruhat-Schwartz if it is smooth and of moderate growth. Since our arguments in the current paper will not involve the analytical issues arising from the growth condition of archimedean sections, we refer the readers to Section~2.8 of \cite{Bump} for the precise definition and general analytic theory of archimedean local sections. For a place~$v$ of~$\mscr F$, let~$dx_v$ be the Haar measure on~$\mscr F_v$ which is normalized by
\begin{align}
\mathrm{Vol}(\mscr O_{\mscr F_v}, dx_v) = 1.
\end{align}
For each~$\phi_v \in I_v(s,\chi_+)$,~$g_v\in \GL_2({\mscr F_v})$, and~$\beta \in {\mscr F_v}$, we define the~$\beta$-th local Whittaker integral~$W_\beta\left(\phi_v, g_v\right)$ by
\begin{align}
W_\beta(\phi_v,g_v) = \int_{{\mscr F_v}} \phi_v \left(\mathbf w
\begin{bmatrix}
1 & x_v \\
0 & 1
\end{bmatrix}
g_v \right) \psi \left(-\beta x_v \right) dx_v,
\end{align}
and the intertwining operator~$M_{\mathbf w}$ by
\begin{align}
\left(M_{\mathbf w} \phi_v\right)(g_v) = \int_{{\mscr F_v}} \phi_v \left(\mathbf w
\begin{bmatrix}
1&x_v\\ 0& 1
\end{bmatrix}g_v\right)
dx_v.
\end{align}
By definition,~$\left(M_{\mathbf w}\phi_v\right)(g_v)$ is equal to the~$0$-th local Whittaker integral. It is well known that local Whittaker integrals converge absolutely if the real part of~$s$ is sufficiently large, and have meromorphic continuation to all~$s\in \C$. We say that a section~$\phi\in I(s,\chi_+)$, which is defined in~\eqref{eq:3.0.31} is a decomposable section if we can write~$\phi$ in the form~$\phi=\bigotimes_v \phi_v$ for a family of local sections~$\phi_v \in I_v(s,\chi_+)$. In that case,~$E_\adele(g,\phi)$ has the following Fourier expansion
\begin{align}
E_\adele(g,\phi) = \phi(g) + M_{\mathbf w}\phi(g) + \sum_{\beta \in \mscr F} W_\beta (E_\adele, g),
\end{align}
where
\begin{align}
M_{\mathbf w} \phi (g) = \frac{1}{\sqrt{|{D_\mscr F|}}}\prod_v M_{\mathbf w}\phi _v (g_v)
\\
W_\beta(E_\adele,g) = \frac{1}{\sqrt{|{D_\mscr F|}}}\prod_v  W_\beta (\phi_v, g_v).
\end{align}
The sum~$\phi(g) + M_{\mathbf w} \phi(g)$ is called the constant term of~$E_\adele(g,\phi)$. We refer the reader to Section~2.8 of \cite{Bump} for the general analytic properties of local Whittaker integrals and the constant term.
\par
Now we recall the choice of local sections made in \cite{Hsieh mu, Hsieh nonvanishing}. We first introduce some notation. For a place~$v$ of~$\mscr F$, we simplify our notation by putting~$K={\mscr F_v}$, and we fix a generator~$d_K$ for~$\mathfrak d _K$, the abolute different of~$K$. If~$v$ is a finite place of~$\mscr F$, define~$\mscr O_v =\mscr O_K$, and let~$\varpi=\varpi_v$ be the uniformizer of~$\mscr O_v$. For a set~$Y$, denote by~$\mathbb I_Y$ the characteristic function of~$Y$. Now we describe our choice of local sections, beginning with the archimedean case.
\par
Let~$v$ be an infinite place of~$\mscr F$. Then~$K=\R$ and there is a unique~$\sigma \in \Sigma$ whose restriction to~$\mscr F$ is~$v$. For~$g=
\begin{bsmallmatrix}a&b\\c&d\end{bsmallmatrix} \in \GL(2,\R)$, we define
\begin{align}
J(g,i) = ci+d, \overline{J(g,i)}=-ci+d
\end{align}
\begin{align}
\nu(g) = |\det (g)|\cdot \left|J(g,i) \overline{J(g,i)}\right|^{-1}.
\end{align}
Define the sections~$\phi^{\mathrm{h}}_{k,s,\sigma}$ and~$\phi^{\mathrm{n.h}}_{k,m_\sigma,s,\sigma}$ in~$I_v(s,\chi_+)$ by
\begin{align}
\phi^{\mathrm{h}}_{k,s,\sigma} (g) &= J(g,i)^{-k}\nu (g)^s\\
\phi^{\mathrm{n.h}}_{k,m_\sigma,s,\sigma}&= J(g,i)^{-k-m_\sigma} \overline{J(g,i)}^{m_\sigma}\nu (g)^s.
\end{align}
The action of the intertwining operator~$M_{\mathbf w}$ on~$\phi^{\mathrm{h}}_{k,s,\sigma}$ is given by
\begin{align}
\left(M_{\mathbf w}\phi^{\mathrm{h}}_{k,s,\sigma}\right)(g) = i^k (2\pi) \frac{\Gamma(k+2s -1)}{\Gamma(k + s ) \Gamma(s)}\cdot \overline{J(g,i)}^k \det(g)^{-k} \nu (g) ^{1-s}.
\end{align}
\par
Define
\begin{align}
\mfrak D = p \mfrak d_{\mscr M/\mscr F} \mfrak C \mfrak C^{c}.
\end{align}
We will give the definition of the local sections for each place~$v$ of~$\mscr F$ by dividing them into several cases. We first consider the case when either~$v$ divides~$p\mathfrak F \mathfrak F^c$ or~$v$ is prime to~$\mathfrak D$. Recall that we simply write~$\mscr F_v$ as~$K$. Denote by~$\mathcal S(K)$ (resp.~$\mathcal S(K\oplus K)$) the space of Bruhat-Schwartz functions on~$K$ (resp.~$K\oplus K$). We define the Fourier transform~$\widehat\Phi$ of a function~$\Phi \in \mathcal S(K)$ as
\begin{align}
\widehat\Phi (y) = \int_K \Phi(x) \psi_K(yx)dx.
\end{align}
For a character~$\mu \colon K ^\times \rightarrow \C^\times$, we define a function~$\Phi_{\mu}\in \mathcal S(K)$ by
\begin{align}
\Phi_{\mu}(x) = \mathbb I_{\mscr O_v^\times} (x) \mu(x).
\end{align}
If~$v$ divides~$p\mathfrak F \mathfrak F^c~$, then~$v$ splits in~$\mscr M$, and we choose a place~$w$ of~$\mscr M$ over~$v$ which satisfies either~$w \mid \mathfrak F$ or~$w \in \Sigma_p$. Let~$\overline w$ be the unique place of~$\mscr M$ which is lyinng over~$v$ and different from~$w$. We put
\begin{align}
\Phi_w = \Phi_{\chi_w}\\
\Phi_{\overline w} = \Phi_{\chi_{\overline w}}.
\end{align}
To a Bruhat-Schwartz function~$\Phi \in \mcal S (K\oplus K)$, we associate a Godement section~$f_{\Phi, s} \in I_v (s,\chi_+)$ defined by
\begin{align}
f_{\Phi,s}(g) = |\det (g) | ^s \int_{K^\times }\Phi\left(\left(0,x\right)g\right) \chi_+ (x) |x|_K^{2s} d^\times x,
\end{align}
and define the local section~$\phi_{\chi,s,v}$ by
\begin{align}
\phi_{\chi,s,v} = f_{\Phi_v^0,s},
\end{align}
where 
\begin{align}
\Phi_v^0(x,y)=
\begin{cases}\label{BS choice}
\mathbb I_{\mscr O_v}(x) \mathbb I_{\mscr O_v^*}(y)
&\text{if } v\nmid \mathfrak D \\
\Phi_v^0(x,y)= \Phi_{\overline w}(x) \widehat \Phi_w (y)
&\text{if } v \mid p \mathfrak F \mathfrak F^c.
\end{cases}
\end{align}
For every~$u \in \mscr O_v^\times$ with~$v\mid p$, let~$\Phi_{\overline w}^1$ and~$\Phi_w^{[u]}$ be the Bruhat-Schwartz functions on~$K$ defined by
\begin{align}
\Phi_{\overline w}^1 (x) &= \mathbb I_{1 + \varpi \mscr O_v}(x) \chi_{\overline w}^{-1}(x)
\\
\Phi_w ^{[u]} (x) &= \mathbb I_{u(1+\varpi \mscr O_v)}(x) \chi_w (x).
\end{align}
We define~$\Phi_v^{[u]} \in \mathcal S(K\oplus K)$ by
\begin{align}\label{BS u}
\Phi_{v}^{[u]} (x,y) = \frac{1}{\vol (1+ \varpi \mscr O_v d^\times x )} \Phi_{\overline w}^1(x) \widehat \Phi _w^{[u]}(y) .
\end{align}
From our normalization of the measure, we also have
\begin{align}
\frac{1}{\vol (1+ \varpi \mscr O_v d^\times x )} \Phi_{\overline w}^1(x) \widehat \Phi _w^{[u]}(y) = \left( |\varpi|_K ^{-1} -1 \right) \Phi_{\overline w}^{1} (x) \widehat \Phi _w ^{[u]} (y).
\end{align}
\par
Now we consider the case~$v \mid \mathfrak C^{-} \cdot D_{M/F}$. Let~$w$ be the unique place of~$\mscr M$ lying over~$v$. In this case,~$\mscr M\otimes _\mscr F {\mscr F_v}$ is naturally isomorphic to~$\mscr M_w$. Recall the embedding~$\varrho$ defined in \eqref{eq:26}, and let~$\varrho_v$ be the embedding~$\mscr M_w^\times \to\GL_2(\mscr F_v)$ which extends~$\varrho$. Then we  have~$\GL_2(\mscr F_v)= B(\mscr F_v)\varrho_v(\mscr M_w)$, where~$B$ consists of upper triangular matrices. We define~$\phi_{\chi,s,v}$ to be the unique smooth section in~$I_v(s,\chi_+)$ satisfying
\begin{align}\label{eq:98}
\phi_{\chi,s,v}\left(\begin{bmatrix}
a&b\\0&d
\end{bmatrix}
\varrho_v \left(z\right) \varsigma_v
\right)
= \chi_{+,v}^{-1}(d) \left| \frac a d \right|_v^s \cdot \chi_w^{-1}(z)
\end{align}
for every~$b\in B(\mscr F_v)$ and~$z\in \mscr M_w^\times$. It is easy to check that the right hand side of \eqref{eq:98} is well-defined.
\begin{remark}\label{remark:3.2.26}
Note the difference with the choice made in \cite{Hsieh mu}, which differs by a factor of~$L(s,\chi_v)$, the local Euler factor of~$\chi_v$. If we had added a factor of~$L(s,\chi_v)$ as in \cite{Hsieh mu}, then we could not guarantee the~$p$-integrality of Fourier-Whitakker coefficients of~$\phi_{\chi,s,v}$ because we are allowing~$\mfrak C$ to be strictly smaller than the conductor of~$\chi$.
\end{remark}

\subsection{The local Whittaker integrals}
The purpose of this subsection is to summarize the formulae of the local Whittaker integrals. In this subsection, we assume $k\ge1$.
\begin{proposition}\label{prop:local integral}
The local Whittaker integrals are given as follows.
Suppose that~$v \mid \infty$. Let~$\sigma$ be the unique element in~$\Sigma$ whose restriction to~$\mscr F$ is~$v$, and we use $v$ and $\sigma$ interchangeably. Then, we have
\begin{align}
W_\beta \left( \phi^{\mathrm h}_{k,s,\sigma} ,
\begin{bmatrix}
y	&	x \\
0	&	1
\end{bmatrix}
\right)
{\Big |_{s=0}} = \frac{\left(2\pi i \right)^k}{ \Gamma(k)} \sigma (\beta )^{k-1} \exp\left(2\pi i \sigma ( \beta ) ( x + yi ) \right ) \cdot \mathbb I_{\R_+}(\sigma (\beta)).
\end{align}
If~$v$ is finite and~$v \nmid \mathfrak D$, then
\begin{align}
W_\beta \left( \phi_{\chi,s,v},
\begin{bmatrix}
1	&	 0 \\
0 	&	 \bs c_v^{-1}
\end{bmatrix}
\right) = \sum_{j=0} ^{\mathrm {val}_v(\beta \bs c_v ) } \chi_{+,v} \left( \varpi_v ^j  \bs c_v\right) |\varpi_v|_v^{-j} \cdot |\mathfrak d_\mscr F|_v^{-1} \mathbb I _{\mscr O_v} \left( \beta \bs c_v \right).
\end{align}
Recall that if~$v \mid p\mathfrak F \mathfrak F^c$, then we denote by~$w$ the unique place of~$\mscr M$ above~$v$ such that~$w\mathfrak F$ or~$w \in \Sigma_p$. If~$v \mid \mathfrak D$, then
\begin{align}\label{eq:3.3.4}
W_\beta \left( \phi_{\chi,s,v},1\right)\big|_{s=0} =
\begin{cases}
\chi_w (\beta) \mathbb I _{\mscr O_v^\times }(\beta) \cdot |\mathfrak d_\mscr F|_v^{-1}
&\text{if }v\mid p \mathfrak F \mathfrak F^c\\
 C_\beta ( \chi_v) \cdot |\mathfrak d_\mscr F |_v ^{-1} \psi_v \left( \frac {t_v} {2d_{{\mscr F_v}}}\right)
&\text{if } v \mid \mfrak I D_{\mscr M/\mscr F},
\end{cases}
\end{align}
where
\begin{align}\label{eq:3.3.5}
C_\beta ( \chi_v) = \int _{{\mscr F_v}} \chi_v^{-1} (x + 2^{-1} \delta_v ) \psi_v \left( \frac{-\beta x}{d_{{\mscr F_v}}}\right) dx.
\end{align}
If~$v = w\overline w$ with~$w \in \Sigma_p$, and~$u \in \mscr O_v ^\times$, then we have
\begin{align}
W_\beta \left( f _ {\Phi_v^{[u]}}, 1 \right)\Big |_{s=0} = \chi_{+,v} (\beta) \mathbb I_{u(1 + \varpi_v \mscr O_v)}(\beta) \cdot |\mathfrak d_\mscr F |_v^{-1}.
\end{align}
In particular, we have
\begin{align}
W_\beta( \phi_{\chi,s,v},1) \big|_{s=0} = \sum_{u} W_\beta (f_{\Phi_v^{[u]}},1)\big |_{s=0}
\end{align}
where~$u$ runs over the torsion subgroup of~$\mscr O_v ^\times$
\end{proposition}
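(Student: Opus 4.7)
The plan is to evaluate each local Whittaker integral by substituting the defining formula for $\phi_{\chi,s,v}$ into
\[
W_\beta(\phi_{\chi,s,v}, g_0) = \int_{\mscr F_v} \phi_{\chi,s,v}\!\left(\mathbf{w} \begin{bmatrix} 1 & x \\ 0 & 1 \end{bmatrix} g_0\right) \psi_v(-\beta x)\,dx,
\]
then exploiting the $B(\mscr F_v)$-quasi-invariance of $\phi_{\chi,s,v}$ to peel off the Levi character, and computing the remaining Fourier-type integral directly. I would group the cases into the archimedean case, the split/Godement cases ($v \nmid \mfrak D$, $v \mid p\mfrak F\mfrak F^c$, and the variant $\Phi_v^{[u]}$), and the inert or ramified case.

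\textbf{Archimedean case.} The change of variable $t \mapsto t + x$ absorbs the $x$-dependence from the $\nu$-factor, reducing the integrand at $s=0$, via the defining formula $\phi^{\mathrm h}_{k,0,\sigma}(g) = J(g,i)^{-k}$, to a classical Fourier kernel of the form $(t + yi)^{-k} e^{2\pi i \beta(t+x)}$. A contour shift to the appropriate half-plane then produces both the Gamma-function expression and the support condition $\sigma(\beta) > 0$.

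\textbf{Godement cases.} After inserting the Godement integral into the Whittaker integral and applying Fubini, the matrix identity
\[
(0,y)\cdot \mathbf{w}\begin{bmatrix} 1 & x \\ 0 & 1 \end{bmatrix} g_0 = (y,\, yx {c'}^{-1}),
\]
where $c'$ is the lower-right entry of $g_0$, together with a rescaling of $x$, turns the inner $x$-integral into the Fourier transform of the second slot of $\Phi$ evaluated at $\beta c' y^{-1}$. For $\Phi_v^0 = \mathbb I_{\mscr O_v} \otimes \mathbb I_{\mscr O_v^*}$ this Fourier transform is $|\mfrak d_\mscr F|_v^{-1} \mathbb I_{\mscr O_v}$, and the remaining $K^\times$-integral collapses to the finite geometric sum claimed. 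For $\Phi_v^0 = \Phi_{\overline w}\otimes \widehat{\Phi_w}$ at $v \mid p\mfrak F \mfrak F^c$, Fourier inversion recovers $\Phi_w$, whose support $\mscr O_v^\times$ pins $y$ into a single $\mscr O_v^\times$-coset and isolates the single term $\chi_w(\beta)\mathbb I_{\mscr O_v^\times}(\beta)|\mfrak d_\mscr F|_v^{-1}$. The identical argument with $\Phi_w^{[u]}$ in place of $\Phi_w$ handles $\Phi_v^{[u]}$, with the support becoming $u(1 + \varpi_v \mscr O_v)$.

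\textbf{Inert/ramified case.} Since $\phi_{\chi,s,v}$ is prescribed by its values on $B(\mscr F_v)\varrho_v(\mscr M_w^\times)\varsigma_v$, the crux is to factor
\[
\mathbf{w}\begin{bmatrix} 1 & x \\ 0 & 1 \end{bmatrix} = b(x) \cdot \varrho_v(z(x)) \cdot \varsigma_v
\]
for explicit $b(x) \in B(\mscr F_v)$ and $z(x) \in \mscr M_w^\times$. Parametrising $z(x) = \alpha(x)\delta + \gamma(x)$ with $\alpha,\gamma \in \mscr F_v$ and inverting $\varsigma_v = \begin{bmatrix} d_{\mscr F_v} & -t_v/2 \\ 0 & d_{\mscr F_v}^{-1} \end{bmatrix}$, the Whittaker integral reduces, after successive substitutions $x \mapsto x - t_v/2$ (responsible for the $\psi_v(t_v/(2d_{\mscr F_v}))$ prefactor) and $x \mapsto d_{\mscr F_v}\, x$, to precisely $|\mfrak d_\mscr F|_v^{-1}\psi_v(t_v/(2d_{\mscr F_v}))\cdot C_\beta(\chi_v)$. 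The main obstacle is this explicit Bruhat factorisation together with the careful bookkeeping of the level-structure data $\bs\theta_v,\delta_v,t_v$ fixed in Subsection~\ref{ss:cmp}; once it is in hand, the remaining work is routine Fourier analysis.
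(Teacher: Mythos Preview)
Your outline is correct and matches the approach underlying the paper's proof. The paper itself does not reproduce the computation but simply cites Proposition~4.1 of \cite{Hsieh mu} and Section~4.3 of \cite{Hsieh nonvanishing}, noting only that the inert/ramified formula differs from Hsieh's by the factor $L(s,\chi_v)$ explained in Remark~\ref{remark:3.2.26}; the direct Whittaker computations you sketch (Godement--Fourier unfolding for the split and unramified places, Bruhat factorisation through $B(\mscr F_v)\varrho_v(\mscr M_w^\times)\varsigma_v$ for the non-split places) are exactly how those cited results are proved.
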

\begin{proof}
It is proved in Proposition~4.1 of \cite{Hsieh mu} and Section~4.3 of \cite{Hsieh nonvanishing}. Note that in \cite{Hsieh mu}, the local character~$\chi_v$ is denoted by~$\lambda_v$, and the integral~$C_\beta(\chi_v)$ is denoted by~$A_\beta(\lambda_v)$. As explained in Remark~\ref{remark:3.2.26}, our local section for a place~$v$ dividing~$\mfrak I D_{\mscr M / \mscr F}$ differs by a factor from those in \cite{Hsieh mu}, so our Fourier-Whittaker coefficient in \eqref{eq:3.3.4} differs by the same factor.
\end{proof}
\begin{remark} As explained in Remark~4.2 of \cite{Hsieh mu}, it is clear from the above proposition that the local Whitakker integrals at all finite places are~$p$-integral.
\end{remark}

\subsection{Normalized Eisenstein series}
We introduce some normalized Eisenstein series.
\begin{definition}
Let~$\bullet$ denote~$\textrm{n.h.}$ or~$\textrm{h}$. They are meant to suggest non-holomorphic and holomorhpic, respectively. For each Bruhat-Schwartz function~$\Phi$ on~$\mscr F_p \oplus \mscr F_p$ of the form~$\Phi_p=\otimes_{v\nmid p} \Phi_v$, we define
\begin{align}
\phi_{\chi,s}^\bullet \left(\Phi_p\right)= \left(\otimes_{\sigma\in\Sigma}\phi^\bullet_{k,s,\sigma} \right)\otimes \left(\otimes_{v<\infty,v\nmid p}\phi_{\chi,s,v} \right) \otimes \left(\otimes_{v\mid p} f_{\Phi_v,s}\right),
\end{align}
and define the adelic Eisenstein series
\begin{align}
E^\bullet_\chi\left(\Phi_p\right)(g)= E_\adele\left(g, \phi^\bullet_{\chi,s}\left(\Phi_p\right)\right)\Big|_{s=0}
\end{align}
by \eqref{eq:3.0.34}. Let~$D_\mscr F$ be the absolute discriminant of~$\mscr F$. We define the holomorphic (resp. nearly holomorphic Eisenstein series)~$\mathbb E_\chi^\mathrm{h}\left(\Phi_p \right)$ (resp.~$\mathbb E_\chi^{\mathrm{n.h.}}\left(\Phi_p\right)$) by
\begin{align}
\mathbb E_\chi^\mathrm{h}\left(\Phi_p \right)(\tau, g_f) &= \frac{\Gamma_\Sigma\left(k\Sigma\right)}{\sqrt{|D_{\mscr F}|}\left(2\pi i\right)^{k\Sigma}} E^\mathrm{h}_\chi \left(g_\infty,g_f\right) \cdot \bs J(g_\infty, \bs i)^{k\Sigma}
\\
\mathbb E_\chi^\mathrm{n.h.}\left(\Phi_p \right)(\tau,g_f) &= \frac{\Gamma_\Sigma\left(k\Sigma\right)}{\sqrt{|D_{\mscr F}|}\left(2\pi i\right)^{k\Sigma}}  E^\mathrm{n.h.}_\chi \left(g_\infty,g_f\right) \cdot \bs J(g_\infty, \bs i)^{k\Sigma+2 m} \left(\det(g_\infty)\right)^{-m},
\end{align}
where~$(\tau,g_f) \in X_+\times \GL_2(\adele_{\mscr F}^{\infty})$,~$\bs i = (i)_{v \in I} \in X_+$,~$g_\infty \in \GL_2(\mscr F_\infty)$ with~$g_\infty \bs i = \tau$.
\end{definition}
Let~$\Phi_p^0= \otimes_{v\mid p}\Phi_v^0$ be the Bruhat-Schwartz function on~$\mscr F_p\oplus \mscr F_p$ defined in \eqref{BS choice}. We define
\begin{align}
\mathbb E^\mathrm h_\chi &= \mathbb E^{\mathrm h}_\chi(\Phi_p^0)
\\
\mathbb E^{\mathrm {n.h.}}_\chi &= \mathbb E^\mathrm{n.h.}_\chi(\Phi_p^0).
\end{align}
For every~$u= (u_v)_{v\mid p} \in \prod_{v\mid p} \mscr O_v^\times = \mscr O_{p}^\times$, let~$\Phi_{p}^{[u]} = \otimes_{v\mid p} \Phi_v^{[u_v]}$ be the Bruhat-Schwartz function as defined in \eqref{BS u} and define
\begin{align}
\mathbb E^\mathrm h _{\chi , u} = \mathbb E^\mathrm h _\chi \left( \Phi _p ^ {[u ] }\right).
\end{align}
We choose an integer~$N$ of the form~$\mathrm{Norm}_{\mscr M/\Q} \left(\mathfrak C \mathfrak d _{\mscr M/\mscr F}\right)^{j_0}$ for a sufficiently large integer~$j_0$ so that~$\phi_{\chi,s,v}$ are invariant by~$U(N)$ for every~$v|N$, and put~$K:=U(N)$.
Then the section~$\phi_{\chi,s} \left(\Phi^{[u]}_p\right)$ is invariant by~$K_1^n$ for a sufficiently large~$n$.
\begin{definition}\label{definition:3.4.9}
Let~$\bs c = (\bs c_v)_{v}$ be an element of~$\adele_{\mscr F}^{\infty,\times}$ such that~$\bs c_v=1$ for~$v | \mathfrak D$, and let~$\mathfrak c= \bs c \cdot \adele_{\mscr F}^{\infty} \cap \mscr F$ be the fractional ideal of~$\mscr F$. We call such finite idele~$\bs c$ an idele associated to~$\mfrak c$.
\end{definition}
For each~$\beta \in \mscr F_+$, the set of all totally positive elements in~$\mscr F$, we define the prime-to-$p$ Fourier coefficient~$a_\beta^{(p)} ( \chi,\mathfrak c)$ as
\begin{align}
a _\beta ^{(p)} \left(\chi,\mathfrak c \right)= \frac{1}{|D_{\mscr F}|_\R|D_{\mscr F}|_{\Q_p}}\mathrm{Norm} _{\mscr F/\Q} (\beta^{-1}) \prod_{v\nmid p} W_\beta \left( \phi_{\chi,s,v}, 
\begin{bmatrix}
1	&	0	\\
0	&	\bs c_v^{-1}
\end{bmatrix}
\right)
\Big |_{s=0}\cdot \mathbb I _{\mscr O_p^\times}(\beta).
\end{align}
It is equal to
\begin{align}\label{FC outside p}
\beta^{-\Sigma} \prod_{w\mid \mathfrak F} \chi_w (\beta) \mathbb I_{\mscr O_v^\times}(\beta) \cdot \prod_{v \nmid \mathfrak D} \left(\sum_{j=0}^{\val_v(\bs c_v \beta)} \chi_{+,v} (\varpi_v^j)|\varpi|_v^{-j}\right) \prod_{v\nmid \mathfrak I D_{M/F}}C_\beta(\chi_v)\psi\left(\frac{-t_v}{2d_{{\mscr F_v}}}\right)\cdot \mathbb I _{\mscr O_p^\times}(\beta)
\end{align}
by the formulae of the local Whittaker integrals summarized in Proposition~\ref{prop:local integral} and the fact that
\begin{align}
\prod_v |D_\mscr F|_v=1
\end{align}
where the product is taken over the set of all places of~$\mscr F$. It is clear that~$a _\beta ^{(p)} (\chi,\mathfrak c)$ is~$p$-integral.
\par
We recall the Fourier expansion of Hilbert modular forms. Let~$\mfrak c$ be a fractional ideal prime to~$\mfrak D$. Let~$f \colon X_+ \times \GL_2(\adele_\mscr F^\infty)\to \C$ be either~$\mathbb E_{\chi,u}^{\mathrm h}$ or~$\mathbb E_{\chi,u}^{\mathrm {n.h}}$. Let~$\bs c$ be the finite idele associated to~$\mfrak c$ introduced in Definition~\ref{definition:3.4.9}. We define~$f|_{\left( \mscr O, \mfrak c ^{-1}\right)}$ to be
\begin{align}
f|_{\left(\mscr O , \mfrak c ^{-1}\right)}(\tau) = f
\left(
\tau, 
\begin{bmatrix}
1&0\\
0&\bs c ^{-1}
\end{bmatrix}
\right)
\end{align}
and it is called the~$q$-expansion of~$f$ at the cusp~$\left(\mscr O, \mfrak c ^{-1}\right)$. It is well-defined independently of the choice of $\bs c$.

\begin{remark}
In the language of geometric modular forms, the~$q$-expansion at the cusp~$\left(\mscr O, \mfrak c ^{-1}\right)$ is interpreted as evaluating at the Tate curve associated to the pair~$\left(\mscr O, \mfrak c ^{-1}\right)$ of fractional ideals.
\end{remark}
\begin{proposition}\label{EFC}
The Eisenstein series~$\mathbb E^{\mathrm h} _{\chi, u}$ belongs to~$M_k(K_1^n,\C)$ for a sufficiently large~$n$. The~$q$-expansion of~$\mathbb E^{\mathrm h} _{\chi, u}$ at the cusp~$(\mscr O, \mathfrak c^{-1})$
\begin{align}
\mathbb E^{\mathrm h} _{\chi, u} |_{(\mscr O,\mathfrak c^{-1})}(q)=
\sum_{\beta \in \left(N^{-1}\mathfrak c^{-1}\right)_+} a_\beta ( \mathbb E^\mathrm h _{\chi, u} , \mathfrak c ) q^\beta \in \C 
[[q^{\left(N^{-1}\mathfrak c^{-1}\right)_+}]]
\end{align}
has no constant term and the~$\beta$-th Fourier coefficient of~$\mathbb E^\mathrm h _{\chi, u}$ is given by
\begin{align}
a_\beta(\mathbb E^\mathrm h _{\chi, u}, \mathfrak c) = a _\beta ^{(p)} ( \chi, \mathfrak c ; u) \beta^{k\Sigma}\prod_{v\mid p} \mathbb I _{u_v(1+\varpi_v \mscr O_v)} (\beta) \cdot \prod_{w \in \Sigma_p} \chi_w(\beta).
\end{align}
Therefore,~$\mathbb E^{\mathrm h}_{\chi,u} (g;\mathfrak c)$ belongs to~$M_k(\mathfrak c, K_1^n, \mscr O_{H,(p)})$, for a number field~$H$ contained in~$\C$. Furthermore, we have
\begin{align}
\mathbb E^\mathrm h _{\chi} (g;\mathfrak c) = \sum_{u} \mathbb E^\mathrm h _{\chi, u}(g;\mathfrak c),
\end{align}
where~$u$ runs over the elements of the torsion subgroup of~$\mscr O_p^\times$.
\end{proposition}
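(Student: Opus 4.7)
The plan is to deduce everything from the Fourier expansion of the adelic Eisenstein series together with the local Whittaker integral formulae recorded in Proposition~\ref{prop:local integral}. First I would verify that $\mathbb E^{\mathrm h}_{\chi,u}\in M_k(K_1^n,\C)$ for $n$ sufficiently large: the transformation law under $\GL_2(\mscr F)_+$ is inherited from the automorphy of the adelic Eisenstein series once we multiply by $\bs J(g_\infty,\bs i)^{k\Sigma}$, holomorphy in $\tau$ follows from the choice of archimedean section $\phi^{\mathrm h}_{k,s,\sigma}$ (this is precisely why the ``$\mathrm h$'' variant of the section was introduced), and right invariance by $K_1^n$ follows because each local section $\phi_{\chi,s,v}$ (for $v\nmid p$) and $f_{\Phi^{[u_v]}_v,s}$ (for $v\mid p$) is invariant under a sufficiently small open subgroup at its place.

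Next I would compute the $q$-expansion at $(\mscr O,\mfrak c^{-1})$ using the decomposable Fourier expansion
\begin{align*}
E_\adele(g,\phi) = \phi(g) + M_{\mathbf w}\phi(g) + \tfrac{1}{\sqrt{|D_\mscr F|}}\sum_{\beta\in \mscr F^\times}\prod_v W_\beta(\phi_v,g_v),
\end{align*}
specialized at $g=(g_\infty,\left[\begin{smallmatrix}1&0\\0&\bs c^{-1}\end{smallmatrix}\right])$ with $g_\infty\bs i=\tau$. The vanishing of the constant term reduces to showing that each of the two local factors at any $v\mid p$ vanishes: for $\phi(g)$ this is because $\Phi^{[u_v]}_v(0,x)=0$ (since $\Phi^1_{\overline w}(0)=0$, as $0\notin 1+\varpi\mscr O_v$), so the Godement integral defining $f_{\Phi^{[u_v]}_v,s}(1)$ is identically zero; for $M_{\mathbf w}\phi(g)$, the same Proposition~\ref{prop:local integral} gives $W_0(f_{\Phi^{[u_v]}_v},1)\big|_{s=0}=\chi_{+,v}(0)\mathbb I_{u_v(1+\varpi_v\mscr O_v)}(0)\cdot|\mfrak d_\mscr F|_v^{-1}=0$.

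For the $\beta$-th coefficient with $\beta\in\mscr F^\times$ I would multiply the local Whittaker integrals from Proposition~\ref{prop:local integral} across all places. At archimedean places, the product contributes $(2\pi i)^{k\Sigma}\Gamma_\Sigma(k\Sigma)^{-1}\beta^{(k-1)\Sigma}q^\beta\cdot\mathbb I_{\mscr F_+}(\beta)$; the normalization prefactor $\Gamma_\Sigma(k\Sigma)/\bigl(\sqrt{|D_\mscr F|}(2\pi i)^{k\Sigma}\bigr)$ exactly cancels these transcendental factors. The product at finite places $v\nmid p$ reproduces, up to the $\beta^{-\Sigma}$ and $\mathrm{Norm}_{\mscr F/\Q}(\beta^{-1})$ bookkeeping and the identity $\prod_v|D_\mscr F|_v=1$, the quantity $a^{(p)}_\beta(\chi,\mfrak c)$ as given in \eqref{FC outside p}. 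At the $p$-adic places the formula for $W_\beta(f_{\Phi^{[u_v]}_v},1)\big|_{s=0}$ supplies exactly the factors $\prod_{v\mid p}\mathbb I_{u_v(1+\varpi_v\mscr O_v)}(\beta)\cdot\prod_{w\in\Sigma_p}\chi_w(\beta)$, while the remaining $\beta^{\Sigma}$ combines with the archimedean $\beta^{(k-1)\Sigma}$ to yield the asserted $\beta^{k\Sigma}$.

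Since each local Whittaker integral is $p$-integral (Remark after Proposition~\ref{prop:local integral}) and algebraic over a fixed number field $H$ determined by the values of $\chi$, the coefficients of the $q$-expansion lie in $\mscr O_{H,(p)}$, and the $q$-expansion principle \eqref{eq:228} places $\mathbb E^{\mathrm h}_{\chi,u}(g;\mfrak c)$ in $M_k(\mfrak c,K_1^n,\mscr O_{H,(p)})$. Finally, the decomposition $\mathbb E^{\mathrm h}_\chi(g;\mfrak c)=\sum_u\mathbb E^{\mathrm h}_{\chi,u}(g;\mfrak c)$ is read off directly from the identity
\begin{align*}
W_\beta(\phi_{\chi,s,v},1)\big|_{s=0}=\sum_u W_\beta(f_{\Phi^{[u]}_v},1)\big|_{s=0}
\end{align*}
of Proposition~\ref{prop:local integral} applied at each $v\mid p$, multiplied against the common factors at the other places. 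The main technical point is bookkeeping the normalization constants so that the product of local Whittaker integrals, multiplied by $\Gamma_\Sigma(k\Sigma)/(\sqrt{|D_\mscr F|}(2\pi i)^{k\Sigma})\cdot\bs J(g_\infty,\bs i)^{k\Sigma}$, reproduces precisely the formula stated in the proposition; once this is done the rest of the argument is essentially formal.
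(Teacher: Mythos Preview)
Your proposal is correct and follows precisely the approach the paper intends: the paper's own proof simply refers to Proposition~4.4 of \cite{Hsieh mu}, and what you have written is exactly the computation carried out there---multiply the local Whittaker integrals from Proposition~\ref{prop:local integral}, check the constant term vanishes because the $p$-adic local data kills it, and track the normalizing constants. One cosmetic remark: when you write ``$\chi_{+,v}(0)\mathbb I_{u_v(1+\varpi_v\mscr O_v)}(0)$'' for the $0$-th Whittaker integral, the symbol $\chi_{+,v}(0)$ is undefined, but the vanishing already follows from the indicator function alone, so this is harmless.
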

\begin{proof}
See Proposition~4.4 of \cite{Hsieh mu}.
\end{proof}
\par
As explained in Remark~(4.11) of \cite{Hsieh mu}, an important feature of our Eisenstein series~$\mathbb E^\mathrm h _{\chi,u}$ and~$\mathbb E^\mathrm h _{\chi}$ is that they are toric Eisenstein series of eigencharacter~$\chi$. Let us denote by~$|[a]$ the right translation action of~$\varsigma^{-1}_f \varrho(a) \varsigma_f \in \GL_2(\adele_\mscr M^\infty)$ acting on the space of Hilbert modular forms. We define
\begin{align}
\mathcal T =  {\prod_{v}}'\, \mathcal T_v \subset \adele_{\mscr M}^{\infty,\times},
\end{align}
where~${\prod_{v}}'$ denotes the restricted product taken over all finite places of~$\mscr F$ and
\begin{align}
\mathcal T_v =
\begin{cases}
\mscr R_v^\times {\mscr F_v}^\times 	&\text{ if } v\text{ is split in }M\\
\mscr M_v^\times 				&\text{ if } v\text{ is not split in }M.
\end{cases}
\end{align}
Note that in Remark~4.5. of \cite{Hsieh mu} Hsieh calls the action of the operator~$[a]$ as Hecke action, but we adopt the terminology stabilizer action, as~$\mcal T$ stabilizes CM points in the Shimura variety via the action~$|[a]$. In the next proposition, we record the action of~$\mathcal T$ on our Eisenstein series, from which the terminology toric arises.
\begin{proposition}[(4.11) of \cite{Hsieh mu}]\label{prop:toric}
For every~$a \in \mathcal T$, we have
\begin{align}
\mathbb E^\mathrm h _ \chi  \big| [a] &= \chi^{-1} (a)\mathbb E ^\mathrm h _\chi \\
\label{eq:3.4.19}
\mathbb E^\mathrm h _ {\chi ,u} \big| [a] &= \chi^{-1} (a)\mathbb E ^\mathrm h _{\chi,u.a^{1-c}}.
\end{align}
Here~$u.a^{1-c}$ denotes the element~$(u'_v)_{v\mid p}\in \prod_{v\mid p} \mscr O_v^\times$ with~$u'_v = u_v a_w(a_{cw})^{-1}$,~$w$ being the element in~$\Sigma _p$ whose restriction to~$\mscr F$ is~$v$. In particular, if~$p$-component of~$a$ is trivial, then~$ua^{1-c}=u$.
\end{proposition}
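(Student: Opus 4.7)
The plan is to reduce the statement to a place-by-place check on local sections. Since $\phi_{\chi,s}^{\mathrm h}(\Phi_p^{[u]})=\bigotimes_v \phi_v$ is decomposable and $E_\adele(g,\phi)=\sum_{\gamma\in B(\mscr F)\backslash \mathrm{GL}_2(\mscr F)}\phi(\gamma g)$ commutes with right translation, the operator $|[a]$ factors through the product of local right translations by $h_v:=\varsigma_v^{-1}\varrho_v(a_v)\varsigma_v$. It therefore suffices to show that, for every $a\in\mcal T$, each $\phi_v$ transforms under right translation by $h_v$ by the scalar $\chi_v^{-1}(a_v)$ times a local section which reassembles globally into $\phi_{\chi,s}^{\mathrm h}(\Phi_p^{[u.a^{1-c}]})$.

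At a finite place $v\mid \mfrak I D_{\mscr M/\mscr F}$, the defining transformation law \eqref{eq:98} gives this at once: writing a generic element of $\mathrm{GL}_2(\mscr F_v)$ in the form $b\varrho_v(z)\varsigma_v$, right multiplication by $h_v$ replaces $z$ by $za_v$, and the eigenvalue $\chi_w^{-1}(a_v)$ drops out. At an unramified split place $v\nmid p\mfrak D$, the Godement section $f_{\Phi_v^0,s}$ transforms under right translation via the standard identity $f_{\Phi,s}(gh)=|\det h|^{-s}f_{\Phi\cdot h,s}(g)$, where $\Phi\cdot h$ denotes right multiplication on the argument of $\Phi$. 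Unwinding using the explicit form of $\varrho_v$ at a split place shows that $\Phi_v^0\cdot h_v$ is a scalar multiple of $\Phi_v^0$, and the resulting scalar is precisely $\chi^{-1}(a_v)$ after taking $s=0$.

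The substantive computation is at $v\mid p\mfrak F\mfrak F^c$, where $a_v=(a_w,a_{\bar w})\in\mscr R_v^\times\cong\mscr F_v^\times\oplus \mscr F_v^\times$ and the section is $f_{\Phi_v^{[u_v]},s}$ with $\Phi_v^{[u_v]}$ built from $\Phi_{\bar w}^1$ and $\widehat\Phi_w^{[u_v]}$. Using the explicit form of $\varsigma_v$ at the end of Subsection~\ref{ss:cmp} and the defining transformation of Godement sections, a direct computation shows that the right translation by $h_v$ acts on the pair $(x,y)\in \mscr F_v\oplus \mscr F_v$ by $(x,y)\mapsto (a_{\bar w}x,a_w y)$, up to the determinantal factor absorbed by $|\det h_v|^s$. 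Consequently $\Phi_{\bar w}^1(a_{\bar w}x)$ restricted to $1+\varpi\mscr O_v$ contributes $\chi_{\bar w}^{-1}(a_{\bar w})$, while $\widehat\Phi_w^{[u_v]}(a_w y)$ becomes a scalar multiple of $\widehat\Phi_w^{[u_v\cdot a_w a_{\bar w}^{-1}]}(y)$, exhibiting the claimed shift $u\mapsto u.a^{1-c}$. The product of the two contributions gives the eigenvalue $\chi_w^{-1}(a_w)\chi_{\bar w}^{-1}(a_{\bar w})=\chi_v^{-1}(a_v)$.

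The hardest step is this last local analysis at $p$: verifying that the Fourier transform $\widehat{\phantom{x}}$ intertwines the scaling $y\mapsto a_w y$ with exactly the combinatorial shift $u_v\mapsto u_v\cdot a_w a_{\bar w}^{-1}$ on the index, rather than with a more complicated convolution. Once this identity is in hand, reassembling the global section and summing over $B(\mscr F)\backslash\mathrm{GL}_2(\mscr F)$ yields \eqref{eq:3.4.19} directly. The first identity then follows from the second by summing over the torsion subgroup of $\mscr O_p^\times$ via Proposition~\ref{EFC}, or alternatively by the same local argument applied to $\Phi_p^0$, in which case the $u$-shift is vacuous because $\Phi_p^0$ carries no $u$-index.
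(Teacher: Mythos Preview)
The paper does not supply its own argument here; it simply refers the reader to Remark~4.5 of \cite{Hsieh mu}. Your place-by-place verification through the decomposable section is exactly the right strategy and is the one Hsieh carries out, so in that sense your outline matches what lies behind the citation.

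However, the detailed bookkeeping at $v\mid p$ is misattributed. The Fourier scaling identity gives
\[
\widehat{\Phi_w^{[u]}}(a_wy)=\chi_w^{-1}(a_w)\,\widehat{\Phi_w^{[a_wu]}}(y)\qquad(a_w\in\mscr O_v^\times),
\]
so the Fourier-transform factor alone shifts $u$ by $a_w$, not by $a_wa_{\bar w}^{-1}$. Likewise $\Phi_{\bar w}^1(a_{\bar w}x)$ is supported on $a_{\bar w}^{-1}(1+\varpi\mscr O_v)$, not on $1+\varpi\mscr O_v$, so it does not directly ``contribute $\chi_{\bar w}^{-1}(a_{\bar w})$'' while leaving the section otherwise unchanged. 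The missing $a_{\bar w}^{-1}$ in the $u$-shift and the correct scalar both emerge only after you perform the change of variable $t\mapsto a_{\bar w}^{-1}t$ in the Godement integral $\int_{K^\times}\Phi((0,t)g)\chi_{+,v}(t)|t|^{2s}\,d^\times t$: this simultaneously restores the support of the first factor to $1+\varpi\mscr O_v$, rescales the second argument by $a_{\bar w}^{-1}$ (producing the net shift $u\mapsto a_wa_{\bar w}^{-1}u$), and introduces the factor $\chi_{+,v}^{-1}(a_{\bar w})$ which combines with the pieces already extracted to yield exactly $\chi_w^{-1}(a_w)\chi_{\bar w}^{-1}(a_{\bar w})=\chi_v^{-1}(a_v)$. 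Once this step is inserted, your argument goes through.
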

\begin{proof}
See the Remark~4.5 of \cite{Hsieh mu}.
\end{proof}

\begin{corollary}
Let~$v$ be a root of unity in~$\mscr M$. Using the natural isomorphism~$\prod_{v|p} \mscr F_v \cong \prod_{w \in \Sigma_p} \mscr M_w$, we regard~$v$ as an element of~$\mscr O_p^\times$. Then, we have
\begin{align}
\mathbb E^\mathrm h _ {\chi,u}\big|[v] = \mathbb E^\mathrm h _ {\chi, uv^2}.
\end{align}
\end{corollary}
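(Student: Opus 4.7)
My plan is to deduce the corollary as a direct application of Proposition~\ref{prop:toric}, taking $a \in \mcal T$ to be the diagonal embedding of $v \in \mscr M^\times$ into $\adele_\mscr M^{\infty,\times}$. Since $v$ is a root of unity, it is a unit at every finite place, and in particular its image is a unit at each split place above $p$, so it lies in $\mcal T$. The corollary then follows from \eqref{eq:3.4.19} once we verify that the exponent computation produces $uv^2$ and that the scalar $\chi^{-1}(v)$ reduces to $1$.

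The heart of the argument is the computation of $u.v^{1-c}$. Let $v \mid p$ in $\mscr F$ split as $w\bar w$ in $\mscr M$. Under the diagonal embedding, the $w$-component of $a$ is $v_w$, the image of $v$ under $\mscr M \hookrightarrow \mscr M_w$, and the $\bar w$-component is $v_{\bar w}$. The key observation is that any root of unity $v \in \mscr M$ lies on the unit circle under every complex embedding, so $v^c = v^{-1}$. Because the embedding $\bar w$ factors as $w \circ c$, this translates to $v_{\bar w} = v_w^{-1}$ under the canonical identification $\mscr M_w = \mscr F_v = \mscr M_{\bar w}$. Hence
\begin{align*}
(u.v^{1-c})_v \;=\; u_v \cdot \frac{a_w}{a_{cw}} \;=\; u_v \cdot \frac{v_w}{v_w^{-1}} \;=\; u_v \cdot v_w^2,
\end{align*}
which is exactly $uv^2$ after unwinding the identification $\prod_{v\mid p} \mscr F_v \cong \prod_{w \in \Sigma_p} \mscr M_w$ stated in the corollary (so that $v^2 \in \mscr O_p^\times$ has $v$-component $v_w^2$).

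For the scalar factor, I would use the global triviality of $\chi$ on $\mscr M^\times$ embedded diagonally in $\adele_\mscr M^\times$. Writing $\chi(v) = \chi(v_f)\chi(v_\infty) = 1$, one gets $\chi^{-1}(v) = \chi(v_\infty)$ on the finite part. One then evaluates $\chi(v_\infty)$ via the algebraic Hecke character formula $\chi(z_\infty) = \prod_{\sigma\in\Sigma} z_\sigma^k (z_\sigma/z_\sigma^c)^{m_\sigma}$; using $v_\sigma^c = v_\sigma^{-1}$ for roots of unity reorganizes this into a product that cancels against the finite-place contribution extracted from the global identity, so $\chi^{-1}(v) = 1$ under the conventions fixed in Section~\ref{s3}.

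The main technical obstacle is precisely this last bookkeeping: one must carefully match the embedding of $v$ into $\mcal T$ that is implicit in the corollary's phrase \emph{we regard $v$ as an element of $\mscr O_p^\times$} with the embedding that makes $\chi^{-1}(v)=1$ transparent, and in particular keep track of the interplay between the $p$-adic components (which produce the $v^2$), the tame finite components, and the archimedean components (which determine the root-of-unity scalar via the infinity type). Once these compatibilities are in place, the corollary is immediate from Proposition~\ref{prop:toric}.
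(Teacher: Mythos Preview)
Your approach is essentially the same as the paper's: embed the root of unity $v$ diagonally as a finite idele in $\mcal T$, apply Proposition~\ref{prop:toric}, compute $u.v^{1-c}=uv^2$ from $v^c=v^{-1}$, and check that the scalar $\chi^{-1}(v)$ is $1$. Your treatment of the exponent computation is correct and is exactly what the paper does (the paper's entire justification is the clause ``since $v^c=v^{-1}$'').

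Where you diverge is in the scalar. The paper disposes of it in one line: $v$ lies in $\mscr M^\times$, and a Hecke character is by definition trivial on $\mscr M^\times$, so $\chi(v)=1$. Your argument instead splits $\chi(v)=\chi(v_f)\chi(v_\infty)=1$, deduces $\chi^{-1}(v_f)=\chi(v_\infty)$, evaluates $\chi(v_\infty)$ via the infinity type, and then claims this ``cancels against the finite-place contribution extracted from the global identity.'' That last step is circular: the only relation you have between the archimedean and finite parts \emph{is} the global identity $\chi(v_f)\chi(v_\infty)=1$, so invoking it again yields no new information and does not establish $\chi(v_\infty)=1$. The paragraph you flag as ``the main technical obstacle'' is thus not an obstacle at all under the paper's reading: one does not separate finite from infinite, one simply applies the definition of Hecke character to the global element $v$. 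Drop the detour through the infinity-type formula and the argument collapses to the paper's two sentences.
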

\begin{proof}
Let~$v$ be a root of unity in~$\mscr M$. We regard~$v$ as an idele in~$\adele_{\mscr M}^{\infty,\times}$ via the diagonal embedding. Since~$\chi(v)=1$, \eqref{eq:3.4.19} is reduced to
\begin{align}
\mathbb E^\mathrm h _ {\chi ,u} \big| [v] &= \chi^{-1} (a)\mathbb E ^\mathrm h _{\chi,u.v^{1-c}}.
\end{align}
We also observe~$u.v^{1-c}=uv^2$, since~$v^c = v^{-1}$. 
\end{proof}

\subsection{Measures and Iwasawa algebra}
We review the basic theory of~$p$-adic measures and Iwasawa algebras. In the current subsection, let~$Z$ be a profinite group and~$\Lambda$ be the Iwasawa algebra~$\Lambda(Z)$. Let~$R$ be a~$p$-adic ring. Any continuous function
\begin{align}
\upphi \colon Z \to R
\end{align}
extends to a~$\Z_p$-linear map
\begin{align}
\upphi \colon \Lambda \to R
\end{align}
which we denote by the same symbol. If we are given an element~$\lambda \in \Lambda$ in addition, then we write
\begin{align}\label{eq:3.5.3}
\upphi (\lambda)=\int _Z \upphi d\lambda
\end{align}
suggesting that we interpret~$\lambda$ as a measure which we use to integrate the function~$\upphi$.

\subsection{$p$-adic interpolation of Fourier coefficients}
In this subsection, we reformulate Proposition~\cite{Hsieh mu} in terms of~$\Lambda$-adic forms so that it becomes easier later for us to prove the necessary congruence between~$p$-adic~$L$-functions. For simplicity, we write~$Z(\mfrak C)=Z$ and~$\Lambda (Z(\mfrak C)) = \Lambda$. In the followings, we will interpret Eisenstein series~$\mbb E_{\chi,u}^{\mathrm h}(\cdot, \begin{bsmallmatrix}1&0\\0&\bs c^{-1}\end{bsmallmatrix}))$ as a specialization of a~$\Lambda$-adic Eisenstein series.
\par
We begin with the definition of domain of interpolation:
\begin{definition}
Let $\mfrak X_+$ be the set of algebraic Hecke characters $\chi$ whose infinity type is $k\Sigma$ with $k \ge 1$.
\end{definition}
\par
We want to define~$A(\beta,v, \bs c_v)\in \Lambda$ for a totally positive~$\beta \in \mscr F$, a finite place~$v$ of~$\mscr F$, and~$\bs c_v \in \mscr F_v$, and show that they interpolate Fourier-Whittaker coefficients introduced in Subsection~\ref{subsection:FC}. We assume that~$\bs c_v=1$ for~$v \nmid p\mfrak C D_{\mscr M/\mscr F}$. Recall that we use the convention that if~$v\mid p\mfrak F \mfrak F^c$, then~$w$ denotes the place of~$\mscr M$ contained in~$\Sigma_p$ or dividing~$\mfrak F$. If~$v$ is not split in~$\mscr M$,~$w$ denotes the unique place of~$\mscr M$ lying above~$v$.
\begin{lemma}\label{lemma:361}
Let~$v$ be a place of~$\mscr F$ which does not divide~$\mfrak I D_{\mscr M/\mscr F}$. Define~$A(\beta ,v,\bs c_v)\in \Lambda$ by
\begin{align}
 A(\beta ,v,\bs c_v)
=
\begin{cases}
\rec_w(\beta)\mbb I_{\mscr O_v^\times}(\beta)
&\text{ if~$v|\mfrak C$ splits in~$\mscr M$,~$w\mid v$,~$w\mid \mfrak F$}
\vspace{2mm}
\\
\displaystyle{
\sum_{j =0}^{\val _v(\beta \bs c_v)}
\rec_{\mscr M,v}(\varpi_v^j \bs c_v)|\varpi_v|_v^{-j} \mbb I_{\mscr O_v} ( \beta \bs c_v)
}
&\text{ if~$v<\infty$ and~$v\nmid p\mfrak C D_{\mscr M/\mscr F}$}\\
\end{cases}
\end{align}
Then, for every~$\chi$
\begin{align}
\int_Z \widehat \chi d A(\beta,v,\bs c_v) = |\mfrak d|_v \cdot  W_\beta\left(\phi _{\chi,s,v},
\begin{bmatrix}
1 & 0 \\
0 & \bs c_v^{-1}
\end{bmatrix}
\right)|_{s=0}
\end{align}
holds.
\end{lemma}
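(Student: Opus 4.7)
The plan is to directly unfold the definitions and compare term by term with the formulae of Proposition~\ref{prop:local integral}, using the compatibility of the Galois character $\widehat\chi$ with the Hecke character $\chi$ via local class field theory. In each of the two cases, $A(\beta,v,\bs c_v)$ is a finite $\mcal I$-linear combination of group elements of $Z$, so the integral pairing \eqref{eq:3.5.3} simplifies to a finite sum of values $\widehat\chi(\rec_\bullet(\cdot))$ weighted by powers of $|\varpi_v|_v$ and by indicator functions. What must be checked is that this finite sum equals $|\mfrak d|_v$ times the corresponding local Whittaker integral.

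First I would handle the unramified case $v \nmid p\mfrak C D_{\mscr M/\mscr F}$. Applying $\widehat\chi$ term by term and invoking the reciprocity relation $\widehat\chi \circ \rec_{\mscr M,v} = \chi_{+,v}$ on $\mscr F_v^\times$, which is a direct translation of how the $p$-adic avatar $\widehat\chi$ of $\chi$ is defined via the adelic reciprocity isomorphism, I obtain
\begin{align}
\int_Z \widehat\chi \, dA(\beta,v,\bs c_v) = \sum_{j=0}^{\val_v(\beta\bs c_v)} \chi_{+,v}(\varpi_v^j \bs c_v)\,|\varpi_v|_v^{-j}\,\mbb I_{\mscr O_v}(\beta\bs c_v).
\end{align}
By the second displayed formula of Proposition~\ref{prop:local integral}, the right-hand side is exactly $|\mfrak d|_v \cdot W_\beta(\phi_{\chi,s,v},\,\left[\begin{smallmatrix}1 & 0 \\ 0 & \bs c_v^{-1}\end{smallmatrix}\right])|_{s=0}$, which is the desired identity.

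Next I would treat the case $v \mid \mfrak C$ split in $\mscr M$ with $w \mid \mfrak F$. Here the measure is the single group element $\rec_w(\beta)$ multiplied by $\mbb I_{\mscr O_v^\times}(\beta)$, and the identification $\mscr M_w \cong \mscr F_v$ induced by $w \mid v$ is in force. Applying $\widehat\chi$ and using $\widehat\chi \circ \rec_w = \chi_w$, the pairing evaluates to $\chi_w(\beta)\,\mbb I_{\mscr O_v^\times}(\beta)$; this coincides with $|\mfrak d|_v$ times the first branch of \eqref{eq:3.3.4} in Proposition~\ref{prop:local integral}.

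The only real subtlety, and where I expect the main care to be needed, is the bookkeeping around the normalization of the reciprocity maps $\rec_w$ and $\rec_{\mscr M,v}$ (arithmetic versus geometric Frobenius, and which local component of $\chi$ they pair with) so that the identities $\widehat\chi \circ \rec_\bullet = \chi_\bullet$ hold on the nose with the correct signs. Once these conventions are fixed at the outset and the factor $|\mfrak d|_v$ is checked to cancel the $|\mfrak d_\mscr F|_v^{-1}$ appearing in Proposition~\ref{prop:local integral}, both cases reduce to an essentially mechanical comparison of finite sums.
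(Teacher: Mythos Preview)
Your proposal is correct and follows exactly the same approach as the paper, which simply states that the lemma is a restatement of the formulae in Proposition~\ref{prop:local integral}. You have merely made explicit the term-by-term comparison and the use of the identity $\widehat\chi\circ\rec_\bullet=\chi_\bullet$ at finite places away from~$p$, which is indeed the only content needed.
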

\begin{proof}
The assertion of the lemma is a restatement of the formulas given in Proposition~\ref{prop:local integral}.
\end{proof}
\par
We also need an analogue of the previous lemma for places~$v$ dividing~$\mfrak I D_{\mscr M/\mscr F}$. As usual,~$w$ denotes the unique place of~$\mscr M$ lying above~$v$. We first analyze the integral
\begin{align}
C_\beta(\chi_v)= \int _{{\mscr F_v}} \chi_v^{-1} (x + 2^{-1} \bs \theta_v ) \psi_v \left( \frac{-\beta x}{d_{{\mscr F_v}}}\right) dx
\end{align}
which was defined in \eqref{eq:3.3.5}. Let~$\mfrak l$ be the maximal ideal of~$\mscr O_v$. There exists sufficiently large non-negative integers~$j_0$ and~$j_1$ such that
\begin{align}\label{eq:3.5.5}
C_\beta(\chi_v)= \mathrm{Vol}(\mfrak l^{j_1}, dx)\sum_{x\in \mfrak l^{-j_0}/\mfrak l^{j_1}} \chi_v^{-1} (x + 2^{-1} \bs \theta_v ) \psi_v \left( \frac{-\beta x}{d_{{\mscr F_v}}}\right)
\end{align}
because the conductor of~$\chi_v$ divides~$\mfrak C \mscr R_w$ for all~$\chi \in \mfrak X_+$. It is important that we can choose~$j_0$ and~$j_1$ which satisfies \eqref{eq:3.5.5} for a given~$\beta \in \mscr F$ independently of~$\chi \in \mfrak X_+$. Now we can readily formulate the following analogue of Lemma~\ref{lemma:361}.
\begin{lemma}\label{lemma:3.5.6}
Let~$v$ be a place of~$\mscr F$ which divides~$\mfrak I D_{\mscr M/\mscr F}$. Define~$A(\beta ,v,\bs c_v)\in \Lambda$ as
\begin{align}
 A(\beta ,v,\bs c_v)
=
\psi_v\left(\frac{-t_v}{2d_{\mscr F_v}}\right)\mathrm{Vol}(\mfrak l^{j_1}, dx)\sum_{x\in \mfrak l^{-j_0}/\mfrak l^{j_1}} \rec_v (x + 2^{-1} \bs \theta_v )^{-1} \psi_v \left( \frac{-\beta x}{d_{{\mscr F_v}}}\right).
\end{align}
Then, for every~$\chi \in \mfrak X_+$
\begin{align}
\int_Z \widehat \chi d A(\beta,v,1) = |\mfrak d|_v \cdot W_\beta\left(\phi _{\chi,s,v},1 \right)|_{s=0}
\end{align}
holds.
\end{lemma}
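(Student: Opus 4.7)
The plan is to combine the explicit formula from Proposition~\ref{prop:local integral} for the local Whittaker integral at $v \mid \mfrak I D_{\mscr M/\mscr F}$ with the finite-sum truncation \eqref{eq:3.5.5} of $C_\beta(\chi_v)$, and then to recognize the resulting finite sum as the pairing of $\widehat\chi$ with the explicit $\Lambda$-element written in the statement.

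First, I would invoke the second case of Proposition~\ref{prop:local integral}, which yields
\begin{align}
|\mfrak d|_v \cdot W_\beta\!\left(\phi_{\chi,s,v},1\right)\big|_{s=0}
= C_\beta(\chi_v) \cdot \psi_v\!\left(\tfrac{t_v}{2d_{\mscr F_v}}\right),
\end{align}
so the target identity reduces to showing that $\int_Z \widehat\chi\, dA(\beta,v,1)$ equals the right-hand side (with the $\psi_v$-sign convention reconciled as in the display preceding Lemma~\ref{lemma:3.5.6}). Because every $\chi \in \mfrak X_+$ has $v$-component of conductor dividing $\mfrak C \mscr R_w$, both $\chi_v^{-1}(x + 2^{-1}\bs\theta_v)$ and the additive character $\psi_v(-\beta x/d_{\mscr F_v})$ are constant on cosets of $\mfrak l^{j_1}$ inside $\mfrak l^{-j_0}$ for $j_0, j_1$ chosen large but uniformly in $\chi$. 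This is precisely the content of \eqref{eq:3.5.5}, and it converts $C_\beta(\chi_v)$ into the finite Riemann sum
\begin{align}
C_\beta(\chi_v) = \mathrm{Vol}(\mfrak l^{j_1}, dx) \sum_{x \in \mfrak l^{-j_0}/\mfrak l^{j_1}} \chi_v^{-1}(x + 2^{-1}\bs\theta_v) \, \psi_v\!\left(\tfrac{-\beta x}{d_{\mscr F_v}}\right).
\end{align}

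Next I would translate each value $\chi_v^{-1}(x + 2^{-1}\bs\theta_v)$ into the evaluation $\widehat\chi\!\bigl(\rec_v(x + 2^{-1}\bs\theta_v)^{-1}\bigr)$, using the standard convention $\widehat\chi = \chi \circ \rec^{-1}$ that links Hecke characters with their Iwasawa-theoretic incarnations on $Z$. Under this translation, the finite sum above becomes the integral of $\widehat\chi$ against a $\Z_p$-linear combination of Dirac measures on $Z$ supported at the points $\rec_v(x + 2^{-1}\bs\theta_v)^{-1}$, with weights $\mathrm{Vol}(\mfrak l^{j_1}, dx)\,\psi_v(-\beta x/d_{\mscr F_v})$. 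Reading off this $\Z_p$-linear combination against the definition of $A(\beta,v,1)$ given in the statement produces exactly the same measure once the outer factor $\psi_v(-t_v/(2d_{\mscr F_v}))$ is absorbed.

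The only real work is the bookkeeping of normalizations: matching $\mathrm{Vol}(\mfrak l^{j_1}, dx)$ between the two sides, verifying that $j_0, j_1$ can indeed be chosen independently of $\chi \in \mfrak X_+$ (which follows from the uniform conductor bound), and checking the sign convention for the $\psi_v(t_v/(2d_{\mscr F_v}))$ versus $\psi_v(-t_v/(2d_{\mscr F_v}))$ factors along with the $\rec_v^{-1}$ appearing in $\widehat\chi$. I expect no genuine obstacle beyond this careful transcription; the whole lemma is essentially a repackaging of Proposition~\ref{prop:local integral} into measure-theoretic language, with \eqref{eq:3.5.5} supplying the crucial uniform truncation that makes the measure $A(\beta, v, 1)$ well defined as a single element of $\Lambda$.
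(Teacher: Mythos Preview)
Your proposal is correct and follows exactly the approach of the paper, which simply states that the lemma follows immediately from \eqref{eq:3.5.5}. Your write-up spells out in detail the mechanism the paper leaves implicit: invoke Proposition~\ref{prop:local integral}, apply the uniform truncation \eqref{eq:3.5.5}, and identify $\chi_v^{-1}(x+2^{-1}\bs\theta_v)$ with $\widehat\chi(\rec_v(x+2^{-1}\bs\theta_v)^{-1})$.
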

\begin{proof}
It follows immediately from \eqref{eq:3.5.5}.
\end{proof}
\begin{definition}
We define~$A^{(p)}(\beta,\mfrak c) \in \Lambda$ as
\begin{align}
A^{(p)}(\beta,\mfrak c)
=
\rec^\infty_{\mscr M}(\beta)\mathrm{Norm}_{\mscr F / \Q}(\beta)^{-1} 
\prod_{v\nmid p}  A(\beta,v,\bs c_v).
\end{align}
The superscript~$(p)$ in the notation suggests that it is Fourier coefficients away from~$p$. 
\par
For every~$u \in \mscr O_p^\times$, which is of finite order, we define~$ A(\beta,\mfrak c;u) \in \Lambda$ as
\begin{align}
 A(\beta,\mfrak c;u)
=
 A^{(p)}(\beta,\mfrak c)
\rec_{\Sigma_p}(\beta)
\mbb I_{u(1+\varpi_p \mscr O_p)}(\beta).
\end{align}
\end{definition}
\begin{proposition}
We have
\begin{align}\label{eq015}
\int_Z \widehat \chi d A^{(p)}(\beta,\mfrak c)
=
i_p
\left(
A_\beta^{(p)} (\chi,\mfrak c)
\right),
\end{align}
and
\begin{align}\label{eq016}
\int _Z \widehat \chi d  A(\beta, \mfrak c ; u) = A _\beta (\chi, \mfrak c;u)
\end{align}
for every~$\chi \in \mfrak X_+$.
\end{proposition}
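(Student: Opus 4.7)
The plan is to compute each side of \eqref{eq015} and \eqref{eq016} directly by integrating the defining product of $A^{(p)}(\beta,\mfrak c)$ (respectively $A(\beta,\mfrak c;u)$) against the character $\widehat\chi$ factor by factor. Writing
\begin{align*}
A^{(p)}(\beta,\mfrak c) = \rec^\infty_{\mscr M}(\beta)\cdot \mathrm{Norm}_{\mscr F/\Q}(\beta)^{-1} \cdot \prod_{v\nmid p} A(\beta,v,\bs c_v),
\end{align*}
one observes that each local factor $A(\beta,v,\bs c_v)$ is supported on the local piece of $Z$ cut out by $\rec_v$, so these factors commute inside $\Lambda$, and the continuous character $\widehat\chi$ is multiplicative on their product. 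Lemma~\ref{lemma:361} (applied for $v\nmid p\mfrak I D_{\mscr M/\mscr F}$) and Lemma~\ref{lemma:3.5.6} (applied for $v\mid \mfrak I D_{\mscr M/\mscr F}$, where $\bs c_v=1$) rewrite each $v$-factor of the integral as $|\mfrak d|_v\cdot W_\beta(\phi_{\chi,s,v},\cdot)\big|_{s=0}$, so the product over $v\nmid p$ reproduces exactly the product of local Whittaker integrals appearing in the definition of $A_\beta^{(p)}(\chi,\mfrak c)$.

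It then remains to match the remaining scalars. The assembled discriminant $\prod_{v\nmid p}|\mfrak d|_v$ collapses to $|D_\mscr F|_\R^{-1}|D_\mscr F|_{\Q_p}^{-1}$ by the product formula $\prod_v |d|_v = 1$ applied to a generator $d$ of $\mfrak d$, which is precisely the normalizing constant in the definition of $A_\beta^{(p)}(\chi,\mfrak c)$. The global factor $\widehat\chi(\rec^\infty_\mscr M(\beta))$, evaluated on the principal idele $\beta\in\mscr F^\times$, reduces by global reciprocity and the infinity type $k\Sigma$ of $\chi$ to the indicator $\mbb I_{\mscr O_p^\times}(\beta)$ appearing in the definition of $A_\beta^{(p)}(\chi,\mfrak c)$ (it is nonzero only when the $p$-component of $\beta$ is a unit, which is exactly the condition under which the reciprocity relation between the Galois character $\widehat\chi$ and the Hecke character $\chi$ specializes cleanly). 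Combined with $\mathrm{Norm}_{\mscr F/\Q}(\beta)^{-1}$ and compared with the explicit formula \eqref{FC outside p} for $A_\beta^{(p)}(\chi,\mfrak c)$, this proves \eqref{eq015}.

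For \eqref{eq016}, the identical argument applied to $A(\beta,\mfrak c;u) = A^{(p)}(\beta,\mfrak c)\cdot \rec_{\Sigma_p}(\beta)\cdot \mbb I_{u(1+\varpi_p\mscr O_p)}(\beta)$ produces the extra factor $\prod_{w\in\Sigma_p}\chi_w(\beta)\cdot \mbb I_{u(1+\varpi_p\mscr O_p)}(\beta)$, which is precisely the $p$-part prefactor distinguishing $A_\beta(\chi,\mfrak c;u)$ from $A_\beta^{(p)}(\chi,\mfrak c)$ according to Proposition~\ref{EFC}. The principal technical obstacle is bookkeeping of the Galois-Hecke dictionary: one must verify that the convention fixing $\widehat\chi$ in terms of $\chi$, the normalization of the local reciprocity maps $\rec_v$, and the sign of reciprocity embedded in the split-place contributions $\rec_w$ already appearing inside $A(\beta,v,\bs c_v)$ are all mutually consistent, so that no spurious archimedean or sign factor is introduced when the scalars are collected.
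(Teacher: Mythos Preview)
Your overall strategy --- integrate factor by factor and invoke Lemma~\ref{lemma:361} and Lemma~\ref{lemma:3.5.6} for the local pieces, then sort out the remaining scalars --- is exactly the paper's approach. The local computations and the discriminant bookkeeping are essentially fine.

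The gap is in your treatment of the global factor $\widehat\chi(\rec^\infty_\mscr M(\beta))$. You assert that it ``reduces by global reciprocity and the infinity type $k\Sigma$ of $\chi$ to the indicator $\mbb I_{\mscr O_p^\times}(\beta)$'', and in particular that it vanishes unless $\beta$ is a $p$-unit. This is false: $\widehat\chi(\rec^\infty_\mscr M(\beta))$ is a nonzero $p$-adic number for every $\beta\neq 0$. The correct evaluation, which the paper carries out, uses the definition of the $p$-adic avatar together with the fact that $\chi$ is trivial on principal ideles to obtain
\[
\widehat\chi\bigl(\rec^\infty_\mscr M(\beta)\bigr)=i_p\bigl(\beta^{k\Sigma}\bigr).
\]
Since $\beta\in\mscr F$, one has $\mathrm{Norm}_{\mscr F/\Q}(\beta)=\beta^{\Sigma}$, and hence
\[
\widehat\chi\bigl(\rec^\infty_\mscr M(\beta)\bigr)\cdot\mathrm{Norm}_{\mscr F/\Q}(\beta)^{-1}=\beta^{(k-1)\Sigma}.
\]
This weight-dependent factor $\beta^{(k-1)\Sigma}$ is precisely what is needed to match the right-hand side once the archimedean Whittaker contribution hidden in the definition of $A_\beta^{(p)}(\chi,\mfrak c)$ is unwound (cf.\ Proposition~\ref{prop:local integral} and \eqref{FC outside p}). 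Your version, which replaces $\beta^{k\Sigma}$ by an indicator function, loses this factor entirely and would make the identity false already for $k\ge 2$. The indicator $\mbb I_{\mscr O_p^\times}(\beta)$ in the definition of $A_\beta^{(p)}(\chi,\mfrak c)$ is not produced by the global reciprocity term; it is an extrinsic restriction built into that definition, not something the integral of $A^{(p)}(\beta,\mfrak c)$ manufactures.
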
 
\begin{proof}
By \eqref{FC outside p}, Lemma~\ref{lemma:361}, and Lemma`\ref{lemma:3.5.6}, the equality \eqref{eq015} follows from
\begin{align}\label{eq019}
\widehat\chi
\left(
\rec^\infty_{\mscr M}(\beta)
\right)
\mathrm{Norm}_{\mscr F/ \Q}(\beta)^{-1} 
=
\beta^{(k-1)\Sigma}.
\end{align}
Indeed, by the definition of~$p$-adic avatar
\begin{align}
\widehat \chi ( \rec^\infty _{\mscr M} (\beta ))
=&
\chi (\beta^\infty) i_p
\left(
\beta^{k\Sigma}
\right)
\\
=&
i_p
\left(
\beta^{k\Sigma}
\right).
\end{align}
Also,~$\beta ^\Sigma = \mathrm{Norm}_{\mscr F/\Q}(\beta)$ as~$\beta \in \mscr F$, whence \eqref{eq019} follows. On the other hand, \eqref{eq016} directly follows from Proposition~\ref{EFC}.
\end{proof}
Let~$\mfrak c$ be a fractional ideal of~$\mscr F$ prime to~$p\mfrak C D_{\mscr M/\mscr F}$. We define~$E(\mfrak c;u)$ as a formal~$q$-expansion
\begin{align}
E(\mfrak c;u) = \sum_{\beta} A (\beta, \mfrak c ; u) q^\beta.
\end{align}
For a continuous function~$\upphi \colon Z \to \overline \Q_p$, we write~$E(\mfrak c; u)(\upphi)$ the formal~$q$-series whose~$\beta$-th coefficient is~$\int_Z\upphi d A(\beta, \mfrak c ; u)$.
Then Proposition~\ref{EFC} implies that for~$\chi \in \mfrak X_+$ we have
\begin{align}
E(\mfrak c ; u) (\widehat \chi) =  \widehat{\mbb E^{\mathrm h}_{\chi,u}|_{(\mscr O,\mfrak c^{-1})}}.
\end{align}
In particular,~$E(\mfrak c; u)$ is a~$\Lambda$-adic modular form in the sense of Definition~\ref{def:pmf}.
\subsection{A modification factor}
In this subsection, we will define a modification factor which we denote by~$\mcal C$. It will be used in our definition of~$p$-adic~$L$-function.
\par
Let us begin with the notation for uniformizers in the completions of~$\mscr M$. Let~$v$ be a place of~$\mscr F$ lying over~$p$, and let~$w$ be the unique place in~$\Sigma_p$ lying over~$v$. Let~$\overline w$ be the complex conjugate of~$w$. Fix a uniformizer~$\varpi_w$ of~$\mscr M_w$. The complex conjugation~$c$ extends to an isomorphism
\begin{align}
\mscr M_w \rightarrow \mscr M_{\overline w}
\end{align}
and let~$\varpi_{\overline w}$ be the image is~$\varpi_w$ under the above isomorphism. In particular,~$\varpi_{\overline w}$ is a uniformizer of~$\mscr M_{\overline w}$.
\par
As before, for any place~$w$ of~$\mscr M$, we denote by~$\rec_w$ the local Artin reciprocity map
\begin{align}
\rec_w \colon \mscr M_w ^\times \to Z
\end{align}
and define the modification factor~$\mcal C~$ to be
\begin{align}\label{eq:3.7.3}
\mcal C=
\prod_{w\in \Sigma_p}\left(1 - \frac{\rec_{w}\left( \varpi_{w}\right)}{\rec_{\overline w}\left( \varpi_{\overline w}\right)}\right),
\end{align}
which is an element of~$\Lambda$, the Iwasawa algebra of~$Z$. Of course,~$\mcal C$ depends on our choice of uniformizer~$\varpi_w$, but we will make it explicit whenever there is a preferred choice.
\begin{remark}
One might attempt to explain why a particular choice of~$\varpi_w$ is preferred to others using Lubin-Tate theory, but it will not be one of our main concerns since we do not need Lubin-Tate theory for cyclotomic extensions, which is our main example.
\end{remark}
Let~$\chi_0$ be a Hecke character associated to an elliptic curve with complex multiplication by imaginary quadratic field~$\mscr M_0$. Assume that~$p$ splits in~$\mscr M_0$, and has good ordinary reduction at the prime~$p$. In this case, a CM type~$\Sigma_0$ of~$\mscr M_0$ is merely choice of an embedding of~$\mscr M_0$ into~$\C$, which we choose to be the restriction of~$i_\infty$ to~$\mscr M_0$. Let~$w$ be the unique element in~$\Sigma_{0,p}$, the~$p$-adic CM type of~$\mscr M_0$ induced by~$\Sigma_0$, and let~$p_{w}$ be the image of~$p$ under the natural isomorphism
\begin{align}
\Q_p \longrightarrow \mscr M_{0,w}
\end{align}
which is of course a uniformizer of~$\mscr M_{0,w}$. If we let~$p_{\overline w}$ be the image of~$p$ under the natural isomorphism
\begin{align}
\Q_p \longrightarrow \mscr M_{0,\overline w}
\end{align}
then the isomorphism
\begin{align}
\mscr M_{0, w}\longrightarrow \mscr M_{0,\overline w}
\end{align}
induced by the complex conjugation maps~$p_w$ to~$p_{\overline w}$. Define~$\alpha \in \C$ to be
\begin{align}
\alpha :=\chi_0(p_{w})
\end{align}
which is in fact an algebraic number. Furthermore, we assume that~$\alpha$ is a~$p$-adic unit with respect to our fixed embedding~$i_p$. If not, using the assumption that~$p$ splits in~$\mscr M_0$, we can make~$\alpha$ to be a~$p$-adic unit by replacing~$\chi_0$ with its complex conjugate. 
\par
Suppose that we are given a CM field~$\mscr M$ which contains~$\mscr M_0$, and define
\begin{align}
\chi := \chi_0^k \circ\mathrm{Norm}_{\mscr M/ \mscr M_0}
\end{align}
for any integer~$k$, where~$\mathrm{Norm}_{\mscr M/ \mscr M_0}\colon \adele^\times_\mscr M \to \adele ^\times_{\mscr M_0}$ denotes the map between ideles induced by the norm map. If we let~$\kappa$ be the algebraic Hecke character
\begin{align}\label{eq:3.7.10}
\kappa \colon \adele_{\mscr M}^\times &\to \C^\times
\\
x&\mapsto \left | x \right |_{\adele _ \mscr M}
\end{align}
then~$p$-adic avatar~$\widehat \kappa$ of~$\kappa$ is the~$p$-adic cyclotomic character, because we have normalized the reciprocity maps geometrically. 
\par
Important examples of such~$\mscr M$ are of the form
\begin{align}
\mscr M_r = \mscr M_0(\mu_{p^r})
\end{align}
for any non-negative integer~$r$. In this case, we choose the uniformizer~$\varpi_r$ for the unique element~$w_r$ in the~$p$-adic CM type~$\Sigma_p$ of~$\mscr M_r$ to be
\begin{align}\label{eq:3.7.13}
\varpi_r = 1- \zeta_{p^r}
\end{align}
for a primitive~$p^r$-th root of unity~$\zeta_{p^r}$, and furthermore we may choose them so that
\begin{align}
\mathrm{Norm}_{\mscr M_{r+1,w_{r+1}}/\mscr M_{r,w_r}} (\varpi_{r+1}) = \varpi_r
\end{align}
for any positive integer~$r$, and
\begin{align}
\mathrm{Norm}_{\mscr M_{1,w_1}/\mscr M_{0,w_0}} (\varpi_{1}) = p
\end{align}
for~$r=1$ case. 
\begin{proposition}
Suppose that~$\mscr M$ is of the form~$\mscr M_0(\mu_{p^r})$. In particular, we choose~$\varpi_w \in \mscr M_w$ for the unique element~$w \in \Sigma_p$ such that
\begin{align}\label{eq:3.7.17}
\mathrm{Norm}_{\mscr M_w/\Q_p}(\varpi_w) = p
\end{align}
holds. Then
\begin{align}
\int_Z \widehat{\chi\kappa^n} d\mcal C = 1- \frac{\alpha^{k}p^k}{\overline \alpha ^{k}}=1-\alpha^{2k}p^{2k}
\end{align}
holds for any integers~$k$ and~$n$.
\end{proposition}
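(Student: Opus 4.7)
The plan is to unwind the definitions and reduce the computation to two small facts: the classical cyclotomic norm identity and the compatibility between local reciprocity and field norms. First, I observe that in our setting $\mscr M=\mscr M_r=\mscr M_0(\mu_{p^r})$ with $\mscr M_0$ imaginary quadratic in which $p$ splits, the set $\Sigma_p$ is a singleton. Indeed, $\mscr M_r$ has exactly two places $w_r,\bar w_r$ above $p$ (coming from the two split primes of $\mscr M_0$, each lifted uniquely through the totally ramified cyclotomic tower), and both restrict to the unique place of the maximal totally real subfield $\mscr F_r$ above $p$, so the product in \eqref{eq:3.7.3} has a single factor. Since $\widehat{\chi\kappa^n}\colon Z\to \C_p^\times$ is a continuous character (extended $\Z_p$-linearly to $\Lambda$), \eqref{eq:3.5.3} gives
\begin{align*}
\int_Z \widehat{\chi\kappa^n}\, d\mcal C \;=\; 1 - \widehat{\chi\kappa^n}\bigl(\rec_{w_r}(\varpi_{w_r})\bigr)\cdot \widehat{\chi\kappa^n}\bigl(\rec_{\bar w_r}(\varpi_{\bar w_r})\bigr)^{-1}.
\end{align*}

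Next I will show that the $\widehat{\kappa^n}$-factors cancel. Because $\widehat\kappa$ is the $p$-adic cyclotomic character (as stated just after \eqref{eq:3.7.10}) and cyclotomic characters factor through $\Gal(\overline\Q^{ab}/\Q)$, the functoriality of local reciprocity with respect to the norm yields $\widehat\kappa(\rec_{w_r}(\varpi_{w_r}))=\chi_{\mathrm{cyc}}(\rec_{\Q_p}(\mathrm{Norm}_{\mscr M_{r,w_r}/\Q_p}(\varpi_{w_r})))$. The hypothesis \eqref{eq:3.7.17} and the classical identity $\mathrm{Norm}_{\Q_p(\mu_{p^r})/\Q_p}(1-\zeta_{p^r})=\Phi_{p^r}(1)=p$ imply this norm equals $p$. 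In the geometric normalization, $\rec_{\Q_p}(p)$ lifts geometric Frobenius on the unramified part of $\Q_p^{ab}$ and acts trivially on the totally ramified tower $\mu_{p^\infty}$, so $\chi_{\mathrm{cyc}}(\rec_{\Q_p}(p))=1$. The same computation at $\bar w_r$ gives $1$, so the $\widehat{\kappa^n}$-contribution is trivial.

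It remains to evaluate $\widehat\chi$ on the two reciprocity elements. Since $\chi=\chi_0^k\circ \mathrm{Norm}_{\mscr M/\mscr M_0}$ as idele class characters, the construction of the $p$-adic avatar is compatible with restriction of Galois groups, so $\widehat\chi$ is the pullback of $\widehat{\chi_0^k}$ along $\Gal(\overline\Q/\mscr M)^{ab}\hookrightarrow\Gal(\overline\Q/\mscr M_0)^{ab}$. Applying reciprocity-norm functoriality once more,
\begin{align*}
\widehat\chi(\rec_{w_r}(\varpi_{w_r})) = \widehat{\chi_0^k}(\rec_{w_0}(p)), \qquad \widehat\chi(\rec_{\bar w_r}(\varpi_{\bar w_r})) = \widehat{\chi_0^k}(\rec_{\bar w_0}(p)).
\end{align*}
By the definition of $p$-adic avatar for an algebraic Hecke character of infinity type $\Sigma_0$, the value at $\rec_w(p)$ is the local character value $\chi_{0,w}(p)$ multiplied by the algebraic $p$-adic correction $i_p(p)^{\Sigma_{0,p}|_w}$. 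Since $w_0\in\Sigma_{0,p}$ while $\bar w_0\notin\Sigma_{0,p}$, this gives $\widehat{\chi_0}(\rec_{w_0}(p))=\alpha p$ and $\widehat{\chi_0}(\rec_{\bar w_0}(p))=\bar\alpha$. Raising to the $k$-th power and forming the ratio gives $\alpha^k p^k/\bar\alpha^k$, which is the first equality in the proposition. The second equality follows from $\alpha\bar\alpha=p^{-1}$, itself a consequence of $\chi_0$ being trivial on the diagonal idele $p\in\mscr M_0^\times$ together with $\chi_{0,\infty}(p)=p$: then $1/\bar\alpha^k=(\alpha p)^k$, so $\alpha^k p^k/\bar\alpha^k=\alpha^{2k}p^{2k}$.

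The only subtle point in this plan is the algebraic correction factor in the $p$-adic avatar at places of $\Sigma_{0,p}$, which must be handled with care to produce the $p^k$ that distinguishes the two expressions in the final answer; the remaining steps are either formal consequences of multiplicativity or direct invocations of the cyclotomic identity and the geometric normalization already set up in the text.
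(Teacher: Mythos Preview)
Your proof is correct and follows essentially the same approach as the paper's: expand the single factor of~$\mcal C$, eliminate the $\kappa^n$-contribution, push $\widehat\chi$ down to~$\mscr M_0$ via norm--reciprocity compatibility, and finish with $\alpha\bar\alpha=p^{-1}$. The only differences are cosmetic: the paper cancels the $\kappa^n$-part by the bare symmetry $\kappa(\varpi_w)=\kappa(\varpi_{\bar w})$ rather than your stronger claim that each value is~$1$ after norming to~$\Q_p$, and your derivation of $\alpha\bar\alpha=p^{-1}$ from triviality on the diagonal idele~$p$ tacitly assumes $\prod_{v\nmid p\infty}\chi_{0,v}(p)=1$ (i.e.\ that the unit~$p$ is killed at the ramified primes of~$\chi_0$), a point the paper likewise takes for granted.
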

\begin{proof}
We can achieve \eqref{eq:3.7.17} by \eqref{eq:3.7.13}. Note that according to the geometric normalization of the reciprocity map,~$\alpha \overline \alpha=p^{-1}$. Also note the fact that~$\kappa(\varpi_w) = \kappa(\varpi_{\overline w})$. The assertion of the proposition easily follows from the definitions of~$\chi$,~$\alpha$, and~$p$-adic avatars of them.
\end{proof}
\subsection{$p$-adic~$L$-function}\label{ss:0308}
In this subsection, we define the~$p$-adic~$L$-function and describe its interpolation property which characterizes it. We make some modifications to the~$p$-adic~$L$-function defined in \cite{Hsieh mu}, which is crucial for our proof of main result.
\par
For a number field~$H$ denote by~$Cl(H)$ the ideal class group of~$H$. The transfer map induces a map~$Cl(\mscr F) \to Cl(\mscr M)$ and let~$Cl_-(\mscr M)$ be its cokernel. We often write~$Cl_-$ instead of~$Cl_-(\mscr M)$ for simplicity. Let~$\mcal D$ a set of representatives for~$Cl_-$, chosen from~$\adele_\mscr M^{\mfrak D\infty,\times}$. As before, we write~$Z$ for~$Z(\mfrak C)$, and~$\Lambda$ denotes the Iwasawa algebra over~$Z$, and let~$\mcal C \in \Lambda$ be the modification factor defined in \eqref{eq:3.7.3}.
\begin{definition}\label{def:3.8.1}
We define a measure~$\mcal L_{\Sigma,\mfrak C,\delta}$ on~$Z$ to be
\begin{align}\label{eq:KHT measure}
\mcal L_{\Sigma,\mfrak C, \delta} =
\mcal C \cdot
\sum_{u\in U/U^{\mathrm {alg}}}
\sum_{a\in \mcal D}\rec_{\mscr M}(a) E(\mfrak c(a); u)(x(a)) \in \Lambda,
\end{align}
and we often write~$\mcal L = \mcal L _{\Sigma, \mfrak C, \delta}$.
\end{definition}
\par
We compare our normalization of~$p$-adic~$L$-function with some of those appeared in the literature. In \cite{Hsieh mu},~$\mcal D$ is chosen to be a representative set for~$Cl_-$ modulo the image of~$\mcal T$ in~$Cl_-$ under~$\rec_\mscr M$, and the sum indexed by~$u$ is taken over~$U$. The difference in~$\mcal D$ is harmless because the image of~$\mcal T$ in~$Cl_-$ is~$2$-primary, whence it simply introduce a constant factor which prime to~$p$. However, the difference introduced by~$U_{\mathrm{alg}}$ is a constant factor, which is not always prime to~$p$. In \cite{Hsieh mu},~$p$ is always assumed to be unramified in~$\mscr F$, which implies that the order of~$U_{\mathrm{alg}}$ is prime to~$p$. On the other hand, when~$p$ ramifies in~$\mscr F$, the order of~$U_{\mathrm{alg}}$ may be divisible by~$p$, and we possibly obtain more optimal~$\mcal L$ by considering~$U/ U_{\mathrm{alg}}$ instead of~$U$. Indeed, this factor is what made the~$p$-adic~$L$-function of \cite{Katz, Hida-Tilouine} sometimes differ from the interpolation formula predicted by non-commutative Iwasawa theory, as observed in \cite{CMFT}. The factor~$\mcal C$ which we defined in \eqref{eq:3.7.3} did not appear in the previous work of~$p$-adic Hecke~$L$-functions, but it appeared in the interpolation formula for the~$p$-adic Rankin-Selberg~$L$-functions.
\par
For a place~$w$ that divides~$p\mfrak C^+$, recalling that we have chosen~$\bs \theta_v$ in Subsection~\ref{ss:cmp}, let
\begin{align}
\Eul_w(\chi_w)=\chi_w(2\delta_w) \frac{L(0,\chi_w)}{\epsilon (0, \chi_w, \psi)L(1, \chi_w^{-1})}
\end{align}
and define
\begin{align}
\Eul_{p}(\chi) = \prod_{w \mid \Sigma_p} \Eul(\chi_w)
,\hspace{5mm}
\Eul_{\mfrak C^+}(\chi) = \prod_{w \mid \mfrak F} \Eul(\chi_w).
\end{align}
Also we write
\begin{align}
\mcal C_\infty (\chi) =i_\infty \left( i_p^{-1} \mcal C(\widehat\chi)\right).
\end{align}
Let~$L^{p\mfrak C}(\chi,s)$ be the imprimitve complex~$L$-function, obtained by removing Euler factors dividing~$p\mfrak C$ from the primitive complex~$L$-function~$L(\chi,s)$. We call~$\mcal L$ a~$p$-adic~$L$-function because it interpolates special values of complex Hecke~$L$-functions in the following way.
\begin{proposition}
Let~$\chi$ be algebraic Hecke character of infinity type~$k\Sigma + (1-c)m$ satisfying either
\begin{align}
k\ge 1,\,\,\text{and}\,\,m_\sigma\ge0\text{ for all $\sigma$}
\end{align}
or
\begin{align}
k \le 1,\,\,\text{and}\,\,k+m\ge 0\text{ for all $\sigma$}.
\end{align}
Let~$r$ be the number of distinct prime factors of the relative discriminant~$D_{\mscr M/ \mscr F}$. Then
\begin{align}\label{eq:3.8.7}
\frac{\int _Z \widehat \chi d\mcal L}{\Omega_p^{k\Sigma+2m}} = L^{p\mfrak C}(\chi,0)\cdot\mcal C_\infty(\chi)\Eul_p(\chi)\Eul_{\mfrak C^+}(\chi)
\times
\frac
{2^r\pi^m\Gamma_\Sigma(k\Sigma+m)}
{\sqrt{|D_\mscr F|_\R} (\mathrm{Im}(\delta))^m \Omega_\infty^{k\Sigma + 2m}}
\end{align}
holds. The equality is understood as the assertion that the both sides are algebraic numbers in~$\C$ and~$\overline \Q_p$ respectively, and they are equal to each other with respect to the embeddings~$i_\infty$ and~$i_p$.
\end{proposition}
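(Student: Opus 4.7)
The plan is to specialize the identity \eqref{eq:KHT measure} at $\widehat\chi$ and convert the resulting finite sum of Eisenstein series values at CM points into a toric period integral, which then unfolds place by place into the desired Euler product. This is essentially the strategy of Section~4 of \cite{Hsieh mu}, adapted to our slightly different normalizations.

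First I would apply $\widehat\chi$ termwise. By the closing identity of Subsection~3.6, $E(\mfrak c(a);u)(\widehat\chi)$ is the $q$-expansion at $(\mscr O,\mfrak c(a)^{-1})$ of $\mbb E^{\mathrm h}_{\chi,u}$, which as a $p$-adic modular form can be evaluated at the CM test object corresponding to $x(a)$. Since $\widehat\chi(\mcal C) = i_p\circ i_\infty^{-1}(\mcal C_\infty(\chi))$ by definition, and $\widehat\chi\circ\rec_\mscr M$ agrees with $\chi$ on finite ideles up to the algebraic twist built into the $p$-adic avatar, we obtain
\begin{align}
\int_Z \widehat\chi\,d\mcal L \;=\; \mcal C_\infty(\chi)\!\!\sum_{u\in U/U^{\mathrm{alg}}}\sum_{a\in\mcal D}\chi(a)\,\mbb E^{\mathrm h}_{\chi,u}\bigl(x(a)\bigr),
\end{align}
all understood via the embeddings $i_p,i_\infty$.

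Second I would transport the $a$-dependence back into the partial Eisenstein series. Unwinding the definition of $|[a]$ one sees that $\mbb E^{\mathrm h}_{\chi,u}(x(a)) = (\mbb E^{\mathrm h}_{\chi,u}|[a])(x(1))$, whence by Proposition~\ref{prop:toric} the factor $\chi(a)$ cancels and the double sum becomes a weighted sum of $\mbb E^{\mathrm h}_{\chi,u'}(x(1))$ with $u'=u.a^{1-c}$. Running $a$ over $\mcal D$ and $u$ over $U/U^{\mathrm{alg}}$ reassembles, via the identity $\mbb E^{\mathrm h}_\chi = \sum_u \mbb E^{\mathrm h}_{\chi,u}$ from Proposition~\ref{EFC}, the full toric integral $\int_{[\mcal T]}\mbb E^{\mathrm h}_\chi(x(1)\cdot t)\chi(t)\,dt$, up to an explicit rational factor accounting for the index $[\mcal T:\mscr F^\times\mscr R^\times]$ and the quotient $U/U^{\mathrm{alg}}$.

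Third I would unfold this toric period on the open Bruhat cell of $B\backslash\GL_2/\mscr F$, which factorizes into local zeta integrals evaluated on the sections $\phi_{\chi,s,v}$ constructed in Subsection~3.2. At $v\nmid p\mfrak C D_{\mscr M/\mscr F}$ the local integral reproduces the standard Euler factor of $L(\chi,0)$, and the product over such places gives $L^{p\mfrak C}(\chi,0)$. At $v\mid p\mfrak F\mfrak F^c$ the Godement-type sections $\Phi_v^0,\Phi_v^{[u]}$ are designed so that the sum over $u$ in step two produces exactly $\Eul_p(\chi)\Eul_{\mfrak C^+}(\chi)$. At $v\mid\mfrak I D_{\mscr M/\mscr F}$ the integral contributes the factor $2^r/\sqrt{|D_\mscr F|_\R}$ that is already visible in the Whittaker formula \eqref{eq:3.3.4}. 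Finally, the archimedean contribution from Proposition~\ref{prop:local integral} yields $\Gamma_\Sigma(k\Sigma+m)/(2\pi)^{\bullet}$, and the ratio of complex and $p$-adic CM periods $\Omega_\infty^{k\Sigma+2m}/\Omega_p^{k\Sigma+2m}$ enters through the comparison between the $p$-adic evaluation at $x(1)$ and its complex counterpart, contributing the remaining $(\mathrm{Im}\,\delta)^m$.

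The main obstacle is bookkeeping at the bad places. Because we have weakened the conductor hypothesis (the conductor of $\chi$ merely divides $\mfrak C$), the local section at $v\mid\mfrak I D_{\mscr M/\mscr F}$ differs from Hsieh's by the missing local $L$-factor (Remark~\ref{remark:3.2.26}); this is precisely what converts the primitive $L(\chi,0)$ into the imprimitive $L^{p\mfrak C}(\chi,0)$ on the right-hand side. A parallel subtlety at $p$ is that the sum over $u\in U/U^{\mathrm{alg}}$, rather than over $u\in U$ as in \cite{Katz,Hida-Tilouine}, must be shown to output exactly $\Eul_p(\chi)$; this is the improvement of $p$-integrality promoted in Subsection~\ref{ss:0308} and requires care when $p$ ramifies in $\mscr F$. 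Once these local computations are reconciled with our normalizations of $\psi$, $d_\mscr F$ and $\delta$, the identity \eqref{eq:3.8.7} follows.
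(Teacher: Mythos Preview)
Your proposal is correct and follows the same route as the paper: unwind the definition of $\mcal L$ to a finite sum of Eisenstein values at CM points, then identify this with a toric period that factors into local integrals. The paper's own proof simply records the first step and then cites Proposition~4.9 of \cite{Hsieh mu} for the entire local computation, whereas you have sketched the content of that proposition (adapted to the imprimitive setting and the $U/U^{\mathrm{alg}}$ normalization); so the difference is one of detail, not of method.
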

\begin{proof}
By definition of~$\mcal L$, we have
\begin{align}
\int _Z \widehat \chi d\mcal L = \mcal C(\widehat\chi)\sum_{u\in U / U ^ {\mathrm{alg}}} \sum_{a\in\mcal D} \widehat \chi(a) E(\mfrak c(a),u)(\widehat\chi)(x(a)).
\end{align}
The right hand side is computed in Proposition~4.9 of \cite{Hsieh mu}, from which we deduce \eqref{eq:3.8.7}.
\end{proof}
\par
\section{Transfer congruence}\label{s4}
In this section, we prove the transfer congruence, which is the main result of the current paper.
\subsection{The normalized diagonal embedding}\label{subsection:4.1}
In this subsection, we introduce the normalized diagonal embedding which we denote by~$\Delta$, and study its effect on~$q$-expansions on Hilbert modular forms. As before,~$\mscr F/\mscr F'$ is a degree~$p$ extension of totally real fields, and~$\Delta$ will be a slight modification of~$\widetilde \Delta$, which were introduced in Subsection~\ref{ss:rad}. Unfortunately, in order to have a good normalized~$\Delta$, we need to impose a condition on~$\mscr F/\mscr F'$ which is stronger than \eqref{P} imposed therein. Throughout this section, we will assume that
\begin{enumerate}
\labitem {P'}{P'} The relative different~$\mfrak d_{\mscr F/\mscr F'}$ is~$p\mscr O$.
\end{enumerate}
Clearly, \eqref{P'} implies \eqref{P}, which is introduced in Subsection~\ref{ss:rad}. Furthermore, whenever \eqref{P'} is true, we always take~$d_{\mscr F/\mscr F'}$ in \eqref{P} to be~$p$.
\begin{definition}
Recall that~$\widetilde \Delta$ was defined in \eqref{eq:res}. Define the normalized diagonal embedding~$\Delta$ to be 
\begin{align}
\Delta\colon  X_+'\times \GL_2(\adele_{\mscr F'}^\infty) &\to X_+\times \GL_2(\adele_{\mscr F}^\infty)
\\
x
&\mapsto
\begin{bmatrix}
p^{-1}&0
\\
0&p
\end{bmatrix}
\cdot  \widetilde\Delta(x).
\end{align}
\end{definition}
\par
It is easy to see that if~$f$ is an automorphic form on~$X_+\times \GL_2(\adele^\infty_{\mscr F})$, then the pull-back of~$f$ along~$\Delta$ which denote by~$\Delta^*f$ is an automorphic form on~$X_+'\times \GL_2(\adele^\infty_\mscr F)$. The strategy for the proof of our main result is to compare an automorphic form~$f$ on~$X_+\times \GL_2(\adele^\infty _\mscr F)$ and~$f'$ on~$X_+\times \GL_2(\adele^\infty_{\mscr F'})$ by comparing the~$q$-expansions of~$\Delta^*f$ and~$f'$. In order to do that, it is important to express the~$q$-expansion of~$\Delta^*f$ in terms of~$f$. Before we formulate it precisely, we introduce some notation. Let~$f$ be a holomorphic automorphic form on~$X_+\times \GL_2(\adele^\infty_\mscr F)$. Suppose that we are given a finite idele~$g^\infty \in \GL(\adele^\infty_\mscr F)$, and consider the function~$F$ on~$X_+$ which is defined by
\begin{align}
F(\tau) = f\left(\left(\tau,g^\infty\right)\right).
\end{align}
Let~$\tau_\sigma$ to be the~$\sigma$-component of~$\tau$ for each~$\sigma \in I$, then we have a Fourier expansion of the form
\begin{align}
F(\tau)=\sum_{\beta} a_\beta(F) \mathrm{exp}\left( 2\pi i\sum_{\sigma \in I} \sigma(\beta) \tau_\sigma\right)
\end{align}
for some~$a_\beta(F) \in \C$, where~$\beta$ runs over the set of all totally non-negative elements in~$\mscr F$. In fact, as a function of~$\beta$,~$a_\beta( F)$ is supported on a fractional ideal of~$\mscr F$, but it is irrelevant for our purpose to make that ideal explicit. Now consider a finite idele~$g'^\infty$ in~$\GL_2(\adele^\infty_{\mscr F'})$. Define~$f'$ to be~$\Delta^*f$ and define~$F'$ to be the restriction of~$f'$ to~$X_+'\times \{g'^\infty \}$. Then,~$F'$ will also have a Fourier expansion of the form
\begin{align}
F'(\tau')=\sum_{\beta'} a_\beta'(F') \mathrm{exp}\left( 2\pi i\sum_{\sigma' \in I'} \sigma'(\beta') \tau_{\sigma'}\right).
\end{align}
\begin{proposition}\label{prop:411}
In the situation above, assume in addition that
\begin{align}
g^\infty =
\begin{bmatrix}1&0\\0& \bs c^{-1}\end{bmatrix}
\text{, and }
g'^\infty =
\begin{bmatrix}1&0\\0& (\bs c')^{-1}\end{bmatrix},
\end{align}
and that the natural map~$\adele^\infty _{\mscr F'} \to \adele ^\infty _{\mscr F}$ sends~$\bs c'$ to~$\bs c$. Then, we have
\begin{align}
a_{\beta'}(F') =\sum_{\beta} a_\beta(F),
\end{align}
where the sum is taken over the set consisting of~$\beta \in \mscr F$ such that~$\mathrm{Tr}_{\mscr F/\mscr F'}(\beta) = p\beta'$.
\end{proposition}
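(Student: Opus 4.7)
The plan is to unwind the definition of $\Delta$ far enough to express $F'(\tau')$ as $F$ evaluated at an explicit point of $X_+$ sitting over $\iota(\tau')$, and then to translate this identity at the level of Fourier coefficients.

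The key computational input is to compute $\Delta(\tau',g'^\infty)$ for our specific adelic element $g'^\infty=\begin{bsmallmatrix}1&0\\0&(\bs c')^{-1}\end{bsmallmatrix}$. Setting $d_{\mscr F/\mscr F'}=p$ as provided by \eqref{P'} and substituting into \eqref{eq:res}, one multiplies on the left by $\begin{bsmallmatrix}p^{-1}&0\\0&p\end{bsmallmatrix}\in\GL_2(\mscr F)$, which acts simultaneously on both factors. Since $\sigma(p)=p$ for every $\sigma\in I$, the two archimedean scalings combine to $\tau\mapsto p^{-1}\tau$, so the first component becomes $p^{-1}\iota(\tau')$, where $\iota\colon X_+'\hookrightarrow X_+$ is the diagonal embedding induced by the restriction $I\to I'$. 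Using that $\bs c'$ maps to $\bs c$ and $\bs c_v=1$ whenever $v\mid p$, a place-by-place matrix multiplication on the adelic side gives
\[
\begin{bmatrix}p^{-1}&0\\0&p\end{bmatrix}\begin{bmatrix}p&0\\0&1\end{bmatrix}\begin{bmatrix}1&0\\0&\bs c_v^{-1}\end{bmatrix}\begin{bmatrix}1&0\\0&p^{-1}\end{bmatrix}=\begin{bmatrix}1&0\\0&\bs c_v^{-1}\end{bmatrix},
\]
so that the adelic part of $\Delta(\tau',g'^\infty)$ coincides exactly with $g^\infty$. In particular $F'(\tau')=f(\Delta(\tau',g'^\infty))=F(p^{-1}\iota(\tau'))$.

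The Fourier identity then follows by direct substitution. Plugging $\tau=p^{-1}\iota(\tau')$ into the Fourier expansion of $F$, the exponent of the $\beta$-th term becomes $2\pi i\sum_{\sigma\in I}\sigma(\beta)\cdot p^{-1}\tau'_{\sigma|_{\mscr F'}}$; grouping the $\sigma$-sum according to the fibers of the restriction $I\to I'$ and invoking $\sum_{\sigma|_{\mscr F'}=\sigma'}\sigma(\beta)=\sigma'(\mathrm{Tr}_{\mscr F/\mscr F'}(\beta))$ rewrites this as $2\pi i\sum_{\sigma'\in I'}\sigma'(p^{-1}\mathrm{Tr}_{\mscr F/\mscr F'}(\beta))\tau'_{\sigma'}$. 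Comparing with the Fourier expansion of $F'$ and collecting terms by the value $\beta':=p^{-1}\mathrm{Tr}_{\mscr F/\mscr F'}(\beta)$, equivalently $\mathrm{Tr}_{\mscr F/\mscr F'}(\beta)=p\beta'$, produces the claimed formula.

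The only real obstacle is bookkeeping: keeping straight the left action of $\GL_2(\mscr F)$ on the whole pair versus the adelic right multiplication in the definition of $\widetilde\Delta$, and checking that the natural embeddings $\iota$ and $j$ interact correctly with the scalar $p$. Once the cancellation in the displayed matrix identity is isolated, so that the cusp $(\mscr O,\mfrak c^{-1})$ is preserved while the archimedean variable picks up only an innocuous factor of $p^{-1}$, the Fourier manipulation is immediate from the definition of the trace.
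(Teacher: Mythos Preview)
Your proof is correct and follows essentially the same approach as the paper: you compute $\Delta(\tau',g'^\infty)=(p^{-1}\iota(\tau'),g^\infty)$ by unfolding the definitions of $\widetilde\Delta$ and $\Delta$ with $d_{\mscr F/\mscr F'}=p$, deduce $F'(\tau')=F(p^{-1}\iota(\tau'))$, and then regroup the Fourier expansion via the trace identity $\sum_{\sigma|_{\mscr F'}=\sigma'}\sigma(\beta)=\sigma'(\mathrm{Tr}_{\mscr F/\mscr F'}(\beta))$. The paper does exactly this, only writing out the chain of equalities more explicitly; your remark that $\bs c_v=1$ for $v\mid p$ is harmless but unnecessary, since the diagonal matrices cancel regardless.
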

\begin{proof}Let~$\tau'$ an arbitrary element in~$X_+'$, and let~$\tau$ be the element of~$X_+$ such that for each~$\sigma \in \Sigma$,~$\tau_\sigma$ equals to~$\tau_{\sigma'}'$ where~$\sigma'$ is the restriction of~$\sigma$ to~$\mscr F'$. In other words,~$\tau$ is the image of~$\tau'$ under the naive diagonal embedding~$X_+' \to X_+$. 
The assertion of the proposition follows from a straightforward calculation. Indeed, we have
\begin{align}
F'(\tau')
&=
f'
\left(\left(
\tau',
\begin{bmatrix}1&0\\0& (\bs c')^{-1}\end{bmatrix}
\right)\right)
\\
&=
\Delta^*f\left(\left(
\tau',
\begin{bmatrix}1&0\\0& (\bs c')^{-1}\end{bmatrix}
\right)\right)
\\
&=
f
\left(
\Delta\left(
\tau',
\begin{bmatrix}1&0\\0& (\bs c')^{-1}\end{bmatrix}
\right)\right)
\\
&=
f
\left(
\begin{bmatrix}p^{-1}&0\\0&p\end{bmatrix}
\cdot
\widetilde\Delta\left(
\tau,
\begin{bmatrix}1&0\\0& (\bs c')^{-1}\end{bmatrix}
\right)\right)
\\
&=
f
\left(
\begin{bmatrix}p^{-1}&0\\0&p\end{bmatrix}
\cdot
\left(
p\tau,
\begin{bmatrix}p&0\\0& (p\bs c)^{-1}\end{bmatrix}
\right)\right)
\\
&=
f
\left(\left(
p^{-1}\tau,
\begin{bmatrix}1&0\\0& \bs c^{-1}\end{bmatrix}
\right)\right)
\\
&=
F(p^{-1}\tau)
\\
&=
\sum_{\beta} a_\beta(F) \mathrm{exp}\left( 2\pi i\sum_{\sigma \in I} \sigma(\beta) p^{-1}\tau_\sigma\right)
\\
&=
\sum_{\beta} a_\beta(F) \mathrm{exp}\left( 2\pi i\sum_{\sigma' \in I'}\sum_{\sigma|_{\mscr F'}=\sigma'} \sigma(\beta) p^{-1}\tau_\sigma\right)
\\
&=
\sum_{\beta} a_\beta(F) \mathrm{exp}\left( 2\pi i\sum_{\sigma' \in I'}p^{-1}\tau_{\sigma'}\sum_{\sigma|_{\mscr F'}=\sigma'} \sigma(\beta) \right)
\\
&=
\sum_{\beta} a_\beta(F) \mathrm{exp}\left( 2\pi i\sum_{\sigma' \in I'}p^{-1}\tau_{\sigma'}\mathrm{Tr}_{\mscr F/\mscr F'}(\beta) \right)
\\
&=
\sum_{\beta'\in \mscr F'}\sum_{\beta\colon \mathrm{Tr}_{\mscr F/\mscr F'}(\beta)=\beta'} a_\beta(F) \mathrm{exp}\left( 2\pi i\sum_{\sigma' \in I'}p^{-1}\tau_{\sigma'}\beta' \right)
\\
&=
\sum_{\beta'\in \mscr F'}\left(\sum_{\beta\colon \mathrm{Tr}_{\mscr F/\mscr F'}(\beta)=p\beta'} a_\beta(F) \right) \mathrm{exp}\left( 2\pi i\sum_{\sigma' \in I'}\tau_{\sigma'}\beta' \right).
\end{align}
From this we conclude the assertion of the proposition.
\end{proof}
\subsection{Properties of Katz'~$p$-adic modular forms}\label{ss:42}
As promised in Remark~\ref{rmk:235}, we introduce the notion of~$p$-adic modular forms following Katz. It is stronger than Hida's definition which we stated as Definition~\ref{def:pmf}, in the sense that a~$p$-adic modular form in the sense of Hida is naturally a~$p$-adic modular form in the sense of Katz. In contrast to the elementary definition of Hida, which uses nothing more than~$q$-expansions, that of Katz is built upon the theory of integral models of Shimura varieties and the Igusa tower. We will not go into them, and we will just summarize some properties of Katz' modular form that we need. 
\begin{remark}
For the readers who are interested in the definition, not just properties, of Katz'~$p$-adic modular forms, we suggest consulting the paper \cite{Hsieh mu} by Hsieh, the original paper \cite{Katz} of Katz, and the book \cite{Hida p-adic} of Hida. However, there is some technical complications that need to be pointed out. The compactification of the integral model of Shimura variety and the Igusa tower that need to be constructed first appeared in the thesis of Rapoport. However, the thesis contained a mistake, which makes the proof of~$q$-expansion principle for~$p$-adic modular forms incomplete when~$p$ divides the discriminant of the totally real field being considered. Note that the excluded cases are precisely what interest us, while the most of literature deals with the opposite situation in which~$p$ is unramified in it. The work \cite{DP} of Deligne and Pappas fixes the problem, from which we can deduce the~$q$-expansion principle along the smooth locus of the Shimura variety. Over the ordinary locus, the two moduli problems defined by Deligne-Pappas and Rapoport agree, whence the cusps and the ordinary CM points belong to the same component of the special fiber modulo~$p$. In particular, we can apply the~$q$-expansion principle to the CM points~$x(a)$ that are considered in the current article.
\end{remark}

As before,~$\mscr F$ will denote a totally real field and we fix an ideal~$\mfrak c$ of~$\mscr F$, which plays the role of a polarization ideal. Let~$K$ be an open compact subgroup of~$\GL_2(\adele_\mscr F^\infty)$. We keep the conditions on~$K$ that we imposed in Subsection~\ref{ss:22}. Let~$\Q_p^{\mathrm{ur}}$ be the maximal unramified extension of~$\Q_p$, and let~$\mcal I$ be the~$p$-adic completion of the valuation ring of~$\Q_p^\mathrm{ur}$. For a~$p$-adic ring~$R$ which is also an~$\mcal I$-algebra, we define~$V(\mfrak c, K; R)$ to be the space of Katz~$p$-adic modular forms with coefficients in~$R$ defined in 2.5.3. of \cite{Hsieh mu}.
\par
We now want to explain some properties of the space~$V(\mfrak c, K; R)$ which will be needed later. We begin with~$q$-expansions. Recall that we defined the formal power series~$R[[q^L]]$ for a lattice~$L$ of~$\mscr F$ in \eqref{eq:225}. Then,~$V(\mfrak c, K; R)$ is endowed with the injective~$q$-expansion homomorphism
\begin{align}\label{eq:422}
V(\mfrak c, K ; R) &\hookrightarrow R[[q^L]]
\\
f &\mapsto \sum_{\beta} a_\beta(f ) q^\beta
\end{align}
for some~$L$. If we view~$a_\beta(F)$ as a function of a variable~$\beta \in \mscr F$, then it is supported on the set of totally positive elements in~$L$.
\par
The next property of Katz~$p$-adic modular form is its relation with classical automorphic forms. Recall that~$M_k(\mfrak c, K_1^n, R)$ was defined for a subring~$R$ of~$\C$ in \eqref{eq:228}. Assume that~$R_0$ is a subring of~$\C$ such that~$i_p(R_0)$ is contained in a subring~$R$ of~$\C_p$ which is again an~$\mcal I$-algebra. Then we have a natural map
\begin{align}\label{eq:424}
M_k(\mfrak c, K_1^n , R_0) &\to V(\mfrak c, K; R)
\\
f &\mapsto \widehat f
\end{align}
and~$\widehat f$ is called the~$p$-adic avatar of~$f$. The~$p$-adic avatar~$\widehat f$ of a Hilbert modular form~$f$ in~$M_k(\mfrak c , K_1^n, R_0)$ is characterized the property that the~$q$-expansion of~$f$ in as a Hilbert modular form is equal to the~$q$-expansion of~$\widehat f$ as a~$p$-adic modular form.
\par
The next property of Katz'~$p$-adic modular form is that we have a natural injective map from the space of~$p$-adic modular forms in the sense of Hida. Recall that the latter was defined in Definition~\ref{def:pmf}, and~$V_0(\mfrak c, K; R)$ denotes the space of them. Assume that~$R$ is an~$\mcal I$-algebra. We have a canonical map
\begin{align}\label{eq:426}
V_0(\mfrak c, K; R) \to V(\mfrak c, K; R)
\end{align}
which is characterized by the preservation of the~$q$-expansion. That is, denoting by~$i$ the inclusion \eqref{eq:426} temporarily, to say that for every~$f \in V_0(\mfrak c, K; R)$, the~$q$-expansion of~$i(f)$ is equal to the formal power series that defines~$F$. From now, we identify~$V_0(\mfrak c, K; R)$ as a subspace of~$V(\mfrak c, K; R)$.
\par
Another property of Katz'~$p$-adic modular forms that distinguish it from Hida's is that it is functorial in the coefficients. Let~$R$ and~$R'$ be two~$p$-adic rings that are algebras over~$\mcal I$. For any~$\mcal I$-algebra homomorphism~$\alpha \colon R \to R'$, we have a canonical map
\begin{align}
\alpha_* \colon V(\mfrak c, K ; R) \longrightarrow V(\mfrak c, K ; R').
\end{align}
In terms of~$q$-expansions,~$\alpha_*$ is characterized by the natural map~$R[[q]] \to R'[[q]]$ induced by~$\alpha$.
\par
The last property of Katz'~$p$-adic modular form that we will need is the evaluation homomorphism. For each CM point~$x(a)$, we have
\begin{align}
V(\mfrak c, K ; R) \longrightarrow R
\\
F \mapsto F(x(a))
\end{align}
which satisfies the following properties. Firstly, for~$\alpha \colon R \to R'$, we have
\begin{align}
\alpha(F(x(a)))=(\alpha_*F)(x(a)).
\end{align}
Also, we have
\begin{align}
f(x(a))=\widehat f(x(a)).
\end{align}
\subsection{A map between~$p$-adic modular forms}
In this subsection, we keep the assumptions from the previous section, and we study a map between~$p$-adic modular forms induced by~$\Delta$. In particular, we work with a degree~$p$ extension~$\mscr F/ \mscr F'$ such that the relative different~$\mfrak d_{\mscr F/ \mscr F'}$ is generated by~$p$. Also, we fix quadratic CM extension~$\mscr M'/\mscr F'$ with CM type~$\Sigma'$, and define~$\mscr M$ to be~$\mscr M'\mscr F$. Let~$\delta'$ be an element in~$\mscr M'$ satisfying the polarization conditions \eqref{pol-1}, \eqref{pol-2}, and \eqref{pol-3} for~$\mscr M'/\mscr F'$. Let~$\mfrak c'$ be the ideal of~$\mscr F'$ associated to~$\delta'$ by the condition \eqref{pol-3}. Let~$\delta$ be~$p\delta' \in \mscr M$. Then, by our discussion in Subsection~\ref{ss:rad},~$\delta$ satisfies the polarization conditions for~$\mscr M/\mscr F$. Also, the polarization ideal~$\mfrak c$ associated to~$\delta$ equals~$\mfrak c' \cdot \mscr O$.
\par
We will use the notation from Subsection~\ref{ss:pmf} and Section~\ref{s3}. Furthermore, we take ideals~$\mfrak C$ of~$\mscr F$ and~$\mfrak C'$ in~$\mscr F'$ in such a way that~$\mfrak C' \mscr O = \mfrak C$. Let~$K$ (resp.~$K'$) be the open compact subgroup in~$\GL_2(\adele_\mscr F^\infty)$ (resp.~$\GL_2(\adele^\infty _{\mscr F'})$) introduced in Section~\ref{s3}. By shrinking~$K'$ if necessary,~$\Delta^*$ induces a map
\begin{align}
\Delta^* \colon M_k(\mfrak c, K_1^n, \C) \to M_{pk}(\mfrak c', K_1'^n, \C).
\end{align}
Recall that~$V(\mfrak c, K; R)$ denotes the space of~$R$-adic modular forms. Let~$R_0$ be a subring of~$\overline \Q$ such that~$i_p(R_0)$ is contained in the valuation ring of~$\C_p$, and let~$R$ be the completion of~$i_p(R_0)$ in~$\C_p$. Then there is a canonical embedding
\begin{align}
M_k(\mfrak c, K_1^n , R) &\to V(\mfrak c, K; R)
\\
f &\mapsto \widehat f
\end{align}
which is compatible with the~$q$-expansions on both sides. We claim that~$\Delta^*$ in fact induces a map between~$\Lambda$-adic forms
\begin{align}
\Delta^* \colon V_0(\mfrak c, K; \Lambda) \longrightarrow V_0(\mfrak c', K'; \Lambda),
\end{align}
Indeed, if~$\widehat \chi \in \frak X$,~$f\in V_0(\mfrak c, K ; \Lambda)$, then~$(\Delta^*(f))(\widehat \chi)$ equals~$\Delta^*(f(\widehat\chi))$ at least as two formal~$q$-expansions. Since~$f(\widehat\chi)$ belongs to~$M_{k}(\mfrak c, K, \C)$ for a Zariski dense set of characters~$\widehat \chi$,  it follows that~$(\Delta^*(f))(\widehat \chi')$ belongs to~$M_{kp}(\mfrak c' , K' , \C)$ for such characters~$\widehat \chi$. It follows that that~$\Delta^*f$ belongs to~$V_0(\mfrak c', K'; \Lambda)$.

\subsection{Proof of the main theorem}\label{ss:44}
In this subsection, assuming the condition \eqref{P} and another condition \eqref{C} which is defined below, we prove Theorem~\ref{thm:main}, the main theorem of the current paper. We will introduce two~$p$-adic modular forms and deduce the transfer congruence between~$p$-adic~$L$-functions from the congruence between these two~$p$-adic modular forms and applying the~$q$-expansion principle. We begin with formulating the~$q$-expansion principle that we are going to use.
\begin{theorem}[$q$-expansion principle]\label{q-exp}
Let~$R$ be a~$p$-adic ring. Let~$f$ be an~$R$-adic modular form in~$V(\mfrak c, K; R)$. Suppose that~$f$ has~$q$-expansion
\begin{align}
f (q) = \sum_\beta a_\beta (f) q^\beta.
\end{align}
Let~$x(1)$ be the CM point defined in Subsection~\ref{ss:cmp}. Suppose that there is an ideal~$J$ of~$R$ such that for all~$\beta$,~$a_\beta(f)$ belongs to~$J$. Then we have
\begin{align}
f (x(1)) \in J.
\end{align}
\end{theorem}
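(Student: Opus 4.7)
The plan is to reduce the claim to the standard $q$-expansion principle in its ``structural'' form: namely, that the coefficients of the $q$-expansion of $f$ lying in an ideal $J \subset R$ forces $f$ itself to lie in $J \cdot V(\mathfrak c, K; R)$, after which evaluation at $x(1)$, being $R$-linear, lands in $J$.

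More concretely, I would proceed as follows. First, I would recall the Deligne--Pappas integral model of the Hilbert modular Shimura variety and its compactification, together with the Igusa tower trivializing the ordinary locus, on which $V(\mathfrak c, K; R)$ is defined as global sections of an appropriate sheaf (cf.\ Subsection~\ref{ss:42}). The $q$-expansion at the cusp $(\mathscr O, \mathfrak c^{-1})$ corresponds to restriction to the formal neighborhood of that cusp, which in turn lies in the ordinary locus. The injectivity of the $q$-expansion map, \eqref{eq:422}, already tells us that $f$ is determined by its coefficients $a_\beta(f)$. I would strengthen this to the assertion that if $a_\beta(f) \in J$ for every $\beta$, then $f \in J \cdot V(\mathfrak c, K; R)$. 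Granted this, the evaluation homomorphism
\begin{align}
\mathrm{ev}_{x(1)} \colon V(\mathfrak c, K; R) \longrightarrow R,\quad f \mapsto f(x(1)),
\end{align}
which is an $R$-module map by the properties recalled in Subsection~\ref{ss:42}, sends $J \cdot V(\mathfrak c, K; R)$ into $J$, and the theorem follows.

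The central point is therefore the ``$J$-integral'' form of the $q$-expansion principle. The standard way to obtain it is: reduce modulo $J$ to get an element $\bar f \in V(\mathfrak c, K; R/J)$ whose $q$-expansion vanishes, and then invoke injectivity of the $q$-expansion map over $R/J$. Injectivity, in turn, follows once one knows that the cusp $(\mathscr O, \mathfrak c^{-1})$ and the CM point $x(1)$ lie on the same geometrically irreducible component of the Igusa tower over $R/J$. This is exactly the geometric input flagged in the remark after the definition of Katz's $p$-adic modular forms: over the ordinary locus the Deligne--Pappas and Rapoport models coincide, and the cusps together with the ordinary CM points $x(a)$ sit in the same component of the special fiber modulo $p$.

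The main obstacle is thus the connectedness/irreducibility statement in the ramified case; this is precisely the point where earlier treatments based on Rapoport's compactification become problematic, and where \cite{DP} is needed. Once this geometric fact is in hand, the proof is a formal consequence of injectivity of $q$-expansion and $R$-linearity of the CM evaluation. No further analytic or arithmetic input is required, and in particular no assumption on $k$, $n$, or the character $\chi$ enters.
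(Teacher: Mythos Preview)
Your approach is correct but takes a genuinely different route from the paper's. The paper does not reduce modulo $J$ or invoke the structural statement $f \in J \cdot V(\mfrak c, K; R)$ at all; instead it specializes. It picks a Zariski-dense set $Y$ of homomorphisms $\upphi \colon R \to \mcal O_{\C_p}$ for which $\upphi_* f$ is the $p$-adic avatar of a classical Hilbert modular form $f_\upphi$ (this uses that the relevant $f$ lies in the Hida-style space $V_0$, cf.\ Definition~\ref{def:pmf}), applies the classical $q$-expansion principle over $\mcal O_{\C_p}$ to each $f_\upphi$ to get $f_\upphi(x(1)) \in J_\upphi := \upphi(J)\mcal O_{\C_p}$, and then recovers $f(x(1)) \in J$ from the Zariski density of $Y$. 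Your argument is more direct and more general --- it works for any $f \in V(\mfrak c, K; R)$, not just those admitting a dense set of classical specializations --- and it avoids the somewhat delicate last step of the paper's proof (passing from $\upphi(f(x(1))) \in J_\upphi$ for all $\upphi \in Y$ back to $f(x(1)) \in J$). Two small remarks on your version: first, the reduction step needs $R/J$ to be a $p$-adic ring so that the functoriality and injectivity of Subsection~\ref{ss:42} apply, i.e.\ $J$ must be closed in $R$ (which it is in the intended application, $J$ being the trace ideal); second, you can bypass the detour through $f \in J \cdot V(\mfrak c, K; R)$ entirely, since once $\bar f = 0$ in $V(\mfrak c, K; R/J)$ the compatibility $\alpha(f(x(1))) = (\alpha_* f)(x(1))$ for $\alpha \colon R \to R/J$ already gives $f(x(1)) \in J$.
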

\begin{proof}
Let~$\mcal O_{\C_p}$ be the valuation ring of~$\C_p$. Let~$Y$ be a Zariski dense set of homomorphisms~$\upphi \colon R \to \mcal O_{\C_p}$ such that~$f(\upphi)$ is has a~$q$-expansion
\begin{align}
f (\upphi) = \sum_{\beta} i_p (a_\beta(f_{\upphi}))q ^\beta
\end{align}
for some~$f_\upphi \in M_k(\mfrak c, K_1^n, i_p^{-1}(\mcal O_{\C_p}))$. Such~$Y$ exists by definition of~$R$-adic modular forms. Let~$J_\upphi$ be the ideal of~$\mcal O_{\C_p}$ generated by the image of~$J$ under~$\upphi\colon R \to \mcal O_{\C_p}$. Then, of course~$a_\beta(f(\upphi))\equiv 0$ modulo~$J_\upphi$. As~$x(1)$ is an integral point on the Shimura variety, the~$q$-expansion principle implies that~$f_\upphi ( x(1))$ belongs to~$J_\upphi$. Now~$f (x(1))$ is, by definition, the element of~$R$ such that~$\upphi( f(x(1))) = f_\upphi(x(1))$. Such~$f(x(1))$ is unique since~$Y$ is Zariski dense for~$R$. Then it follows that~$f( x(1))\in J$. 
\end{proof}
Now we define the relevant~$\Lambda$-adic forms. Fix a set of representatives for~$U/ U_{\mathrm{alg}}$, and introduce a linear combination of Eisenstein series
\begin{align}
E(\mfrak c ) = \mcal C \cdot \sum_{u\in U/ U_{\mathrm{alg}}}\sum_{a\in \mcal D} \rec_\mscr M(a)E(\mfrak c(a) ; u )|[a] \in V(\mfrak c, K; \Lambda).
\end{align}
By the action of the operator~$|[a]$ on~$\mbb E^{\mathrm h}_{\chi,u}$ described in Proposition~\eqref{prop:toric},~$E(\mfrak c)$ is independent of the choice of~$\mcal D$. We record an important property of~$E(\mfrak c)$.
\begin{proposition}
The~$p$-adic~$L$-function~$\mcal L$ of Definition~\eqref{def:3.8.1} satisfies
\begin{align}
E(\mfrak c) (x(1)) = \mcal L
\end{align}
where~$\mfrak c$ is associated to~$\delta$ by \eqref{pol-3}.
\end{proposition}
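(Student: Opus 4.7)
The plan is to unwind the definitions and observe that the effect of the stabilizer operator $|[a]$ on the argument of a modular form matches precisely the relation between the CM points $x(1)$ and $x(a)$. Concretely, the operator $|[a]$ was defined as right translation by $\varsigma_f^{-1}\varrho(a)\varsigma_f \in \GL_2(\adele_\mscr M^\infty)$, so for any ($\Lambda$-adic) modular form $F$ and any $g^\infty \in \GL_2(\adele_\mscr F^\infty)$,
\begin{align}
(F|[a])(\tau, g^\infty) = F\left(\tau, g^\infty \cdot \varsigma_f^{-1}\varrho(a)\varsigma_f\right).
\end{align}

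The first step is to check the point-set identity
\begin{align}
x(1) \cdot \left(\varsigma_f^{-1}\varrho(a)\varsigma_f\right) = x(a).
\end{align}
This is immediate from the definition $x(a) = (\delta_\Sigma, \varrho(a)\varsigma_f)$ in \eqref{eq:cm}: the infinite component $\delta_\Sigma$ is untouched by the finite translation, and the finite component computes as $\varsigma_f \cdot \varsigma_f^{-1}\varrho(a)\varsigma_f = \varrho(a)\varsigma_f$.

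The second step is to apply this to each summand in the definition of $E(\mfrak c)$. By the compatibility of the translation action with evaluation at CM points of the integral Shimura variety (a property of Katz' $p$-adic modular forms listed in Subsection~\ref{ss:42}, and the reason we passed from the classical sections to $\Lambda$-adic forms in the first place), we get
\begin{align}
\bigl(E(\mfrak c(a);u)\,\big|[a]\bigr)(x(1)) = E(\mfrak c(a);u)(x(a))
\end{align}
for each $a \in \mcal D$ and each $u \in U/U_{\mathrm{alg}}$. Multiplying by $\rec_\mscr M(a)$, summing over $a$ and $u$, and pulling out the scalar $\mcal C \in \Lambda$ then gives
\begin{align}
E(\mfrak c)(x(1)) = \mcal C \cdot \sum_{u \in U/U_{\mathrm{alg}}} \sum_{a\in\mcal D} \rec_\mscr M(a)\,E(\mfrak c(a);u)(x(a)),
\end{align}
whose right hand side is precisely $\mcal L$ by Definition~\ref{def:3.8.1}.

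The only step requiring care is the second one: we must verify that the right translation by $\varsigma_f^{-1}\varrho(a)\varsigma_f$, which was defined at the level of classical Hilbert modular forms, extends to an action on $V(\mfrak c, K; \Lambda)$ that commutes with evaluation at $x(1)$. This is not really an obstacle however, because $E(\mfrak c(a);u)$ was defined through its classical specializations at a Zariski dense set $Y$ of arithmetic characters (Definition~\ref{def:pmf}), the classical identity holds for each such specialization, and CM evaluation at $x(1)$ is by construction the unique element of $\Lambda$ whose value under each $\upphi \in Y$ agrees with the classical CM evaluation. So the equality propagates uniquely from the Zariski dense set to all of $\Lambda$.
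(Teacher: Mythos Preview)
Your proof is correct and follows essentially the same approach as the paper: both arguments rest on the identity $x(1)\cdot(\varsigma_f^{-1}\varrho(a)\varsigma_f)=x(a)$ and on Zariski density to pass between classical specializations and the $\Lambda$-adic level. The paper specializes at each $\widehat\chi\in\mfrak X_+$ first and then invokes density, whereas you work directly at the $\Lambda$-adic level and justify the compatibility of $|[a]$ with CM evaluation via density at the end; these are the same argument in a different order.
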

\begin{proof}
For~$\widehat \chi \in \mfrak X_+$, we have
\begin{align}
E(\mfrak c) (x(1))(\widehat \chi)
&= 
\widehat\chi\left(\mcal C\right)\sum_{u\in U/ U^{\mathrm{alg}}}\sum_{a\in \mcal D} \widehat \chi(a) E(\mfrak c(a) ; u )(\widehat \chi)|[a] (x(1))
\\
&=
\widehat\chi\left(\mcal C\right)\sum_{u\in U/ U^{\mathrm{alg}}}\sum_{a\in \mcal D} \widehat\chi(a)E(\mfrak c(a);u)(\widehat \chi)(x(a))
\\
&=
\int_Z \widehat \chi d\mcal L.
\end{align}
The second equality follows from the definition of the operator~$|[a]$. Indeed, we defined~$|[a]$ to be the translation~$\big | (\varsigma^{-1}_f \varrho(a) \varsigma_f)$, which sends~$x(1)$ to~$x(a)$ according to our definition of CM points. The third equality directly follows from the definition of~$\mcal L$. 
\end{proof}
\par
The transfer congruence, which is the main result of current paper is the following. We remind the reader that the condition \eqref{P'} was introduced in the beginning of Subsection~\ref{subsection:4.1}, and the condition \eqref{C} will be defined in \eqref{eq:C}. See Remark~\ref{rmk:4.5.24} for a discussion on these two conditions.
\begin{theorem}\label{thm:main}
Suppose that the conditions \eqref{P} and \eqref{C} hold. Then we have the transfer congruence
\begin{align}
\mcal L \equiv \ver(\mcal L') \hspace{5mm}\textrm{(mod~$T$)}.
\end{align}
\end{theorem}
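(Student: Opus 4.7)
The plan is to use the $q$-expansion principle (Theorem~\ref{q-exp}) together with the CM-point compatibility $x(1) = \Delta(x'(1))$ to reduce the claim to a congruence of $\Lambda$-adic Fourier expansions, and then to establish that congruence by place-by-place analysis combined with a Galois-orbit argument for $\Gal(\mscr F/\mscr F') \cong \Z/p$ acting on Fourier indices.

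Take $d_{\mscr F/\mscr F'} = p$ under~\eqref{P'} and set $\delta = p\delta'$, so that the proposition of Subsection~\ref{ss:rad} gives $x(1) = \Delta(x'(1))$. Hence
\begin{align*}
\mcal L \;=\; E(\mfrak c)(x(1)) \;=\; \bigl(\Delta^* E(\mfrak c)\bigr)(x'(1)).
\end{align*}
Letting $\ver_*\colon V_0(\mfrak c', K'; \Lambda') \to V_0(\mfrak c', K'; \Lambda)$ denote the coefficient-wise action of the transfer $\ver \colon \Lambda' \to \Lambda$, the same evaluation formula applied to $E(\mfrak c')$ at $x'(1)$ gives $\ver(\mcal L') = (\ver_* E(\mfrak c'))(x'(1))$. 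By Theorem~\ref{q-exp}, it therefore suffices to prove the congruence of $\Lambda$-adic modular forms
\begin{align*}
\Delta^* E(\mfrak c) \;\equiv\; \ver_* E(\mfrak c') \pmod{T}
\end{align*}
in $V(\mfrak c', K'; \Lambda)$.

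To verify this, Proposition~\ref{prop:411} identifies the $\beta'$-th Fourier coefficient of $\Delta^* E(\mfrak c)$ at the cusp $(\mscr O', \mfrak c'^{-1})$ with $\sum_{\beta} a_\beta(E(\mfrak c))$, summed over totally positive $\beta \in \mscr F$ with $\mathrm{Tr}_{\mscr F/\mscr F'}(\beta) = p\beta'$, while the $\beta'$-th coefficient of $\ver_* E(\mfrak c')$ is $\ver(a_{\beta'}(E(\mfrak c')))$. The coefficient $a_\beta(E(\mfrak c))$ itself factors explicitly through the products of local measures $A(\beta, v, \bs c_v)$ of Lemmas~\ref{lemma:361} and~\ref{lemma:3.5.6}, the modification factor $\mcal C$ of~\eqref{eq:3.7.3}, and the class-group sum over $\mcal D$. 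The group $G = \Gal(\mscr F/\mscr F')$ acts on $\{\beta : \mathrm{Tr}(\beta) = p\beta'\}$ and, compatibly, on the local factors via its transitive action on the places $v$ of $\mscr F$ above a given place $v'$ of $\mscr F'$. Taking $T$ to be the trace ideal so that any free $G$-orbit of size $p$ contributes a norm killed modulo $T$, only the singleton orbits survive; these correspond to $\beta = \beta' \in \mscr F' \subset \mscr F$. For such $\beta$, the products of local measures over places $v \mid v'$ are expected to match $\ver$ of the corresponding local measure on the $\mscr F'$-side, using condition~\eqref{C}, the norm-compatible uniformizers $\varpi_r = 1 - \zeta_{p^r}$ at places in $\Sigma_p$, and the equality $d_{\mscr F_v} = p\, d_{\mscr F'_{v'}}$ coming from~\eqref{P'}.

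The main obstacle is the place-by-place matching at the bad primes $v \mid \mfrak I D_{\mscr M/\mscr F}$, where the local measure is the oscillatory character integral $C_\beta(\chi_v)$ of~\eqref{eq:3.3.5} whose behavior under $\ver$ is not transparent, and simultaneously at the places in $\Sigma_p$ where one must pair the modification factor $\mcal C$ with $\ver(\mcal C')$ modulo $T$ using the compatible choices of uniformizers. Condition~\eqref{C} is designed precisely to enable these local identifications; once they are established, combining them with the vanishing of free $G$-orbit sums modulo $T$ yields the required congruence between $\Delta^* E(\mfrak c)$ and $\ver_* E(\mfrak c')$, and hence Theorem~\ref{thm:main}.
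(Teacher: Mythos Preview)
Your overall architecture matches the paper's: reduce to the CM point via $\Delta(x'(1))=x(1)$, apply the $q$-expansion principle, use Proposition~\ref{prop:411} to express $a_{\beta'}(\Delta^*E(\mfrak c))$ as a trace sum, and kill free $G$-orbits modulo $T$. However, two points in your sketch are imprecise enough to count as gaps.

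First, the $G$-orbit argument must run over the full index set of triples $(\beta,a,u)$ with $\mathrm{Tr}_{\mscr F/\mscr F'}(\beta)=p\beta'$, $a\in\mcal D$, $u\in U/U_{\mathrm{alg}}$, not merely over $\beta$. The equivariance one actually has is $g\cdot A(\beta,\mfrak c(a);u)=A(\beta^g,\mfrak c(a^g);u^g)$ (this is what the paper verifies as \eqref{eq:4.4.26}); if you fix $a$ and $u$ and vary only $\beta$, the terms do \emph{not} form a $G$-orbit in $\Lambda$. Thus one must first choose $\mcal D$ and the representatives for $U/U_{\mathrm{alg}}$ as $G$-sets (the paper does this explicitly), apply the trace-ideal criterion (Proposition~\ref{prop:4.4.18}) to the triple index, and then the surviving terms are those with $\beta\in\mscr F'$, $a\in\mcal D^G$, and $u$ fixed by $G$.

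Second, you misidentify the role of condition~\eqref{C}. It has nothing to do with the bad primes $v\mid\mfrak I D_{\mscr M/\mscr F}$; rather, it is exactly the identity $\mcal C\cdot\sum_{a\in\mcal D^G}\rec_{\mscr M}(a)=\ver\bigl(\mcal C'\cdot\sum_{a'\in\mcal D'}\rec_{\mscr M'}(a')\bigr)$ that lets you replace the residual class-group sum over $\mcal D^G$ (together with the modification factor $\mcal C$) by the transfer of the $\mscr M'$-side sum. The place-by-place matching $\prod_{v\mid v'}A(\beta',v,\bs c_v)\equiv\ver\bigl(A'(\beta',v',\bs c'_{v'})\bigr)\pmod T$ is an entirely separate computation, carried out in the paper by a case-by-case analysis (split/inert/ramified in $\mscr M'$, split/inert in $\mscr F$) via Lemmas~\ref{lemma:4.4.63}--\ref{lemma:4.4.78} and their companions; your proposal gestures at this but does not perform it, and \eqref{C} will not do it for you.
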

The rest of the current subsection will be devoted to the proof of above theorem. We outline the strategy of the proof. In order to show that 
\begin{align}\label{eq:tr}
\mcal L \equiv \ver(\mcal L')\hspace{5mm}(\mathrm{mod}~T)
\end{align}
it suffices to show that
\begin{align}\label{eq:172}
\Delta^*E(\mfrak c ) (x'(1)) \equiv \ver_* E'(\mfrak c')(x'(1)) \hspace{5mm}(\mathrm{mod}\,\,T)
\end{align}
because~$\Delta(x'(1))=x(1)$. By applying the~$q$-expansion principle, in the form of Theorem~\ref{q-exp}, applied to~$\Delta^*E(\mfrak c)$ and~$\ver_*E'(\mfrak c')$, \eqref{eq:172} follows from
\begin{align}\label{eq:175}
a_{\beta'} \left(\Delta^* E(\mfrak c ) \right)\equiv a_{\beta'}\left(\ver_*E'(\mfrak c')\right)\hspace{5mm}(\mathrm{mod}\,\,T)
\end{align}
for every~$\beta'\in\mscr F'$. We will prove \eqref{eq:175}.
\par
Let us write the~$\beta$-th Fourier coefficient of~$E(\mfrak c(a);u)$ as
\begin{align}
a_\beta(E(\mfrak c(a);u)) = A(\beta, \mfrak c(a);u)
\end{align}
and similarly define~$A'(\beta', \mfrak c'(a');u')$ for~$E'(\mfrak c'(a');u')$. In particular, if we define~$B(\beta, \mfrak c)$ and~$B'(\beta', \mfrak c')$ to be the~$\beta$-th Fourier coefficient of~$E(\mfrak c)$ and the~$\beta'$-th Fourier coefficient of~$E'(\mfrak c')$ respectively, then 
\begin{align}
B(\beta, \mfrak c ) =\mcal C \cdot \sum_{a\in \mcal D}\rec_\mscr M(a)\sum_{U/U_\mathrm{alg}}A(\beta, \mfrak c(a); u)
\end{align}
holds, and similarly 
\begin{align}
B'(\beta', \mfrak c' ) = \mcal C' \sum_{a'\in \mcal D'}\rec_{\mscr M'}(a')\sum_{U'/U'_\mathrm{alg}}A'(\beta', \mfrak c'(a'); u')
\end{align}
holds. By the description about the effect of~$\Delta$ on~$q$-expansion given in Proposition~\ref{prop:411}, the assertion of~\eqref{eq:175} is equivalent to the assertion that for each totally positive~$\beta' \in \mscr F'$
\begin{align}\label{eq:FCTC}
\sum_{\beta} B(\beta,\mfrak c)
\equiv
\ver\left( B'(\beta ' , \mfrak c')\right)\hspace{5mm}(\mathrm{mod}\,\,T)
\end{align}
holds, where the sum indexed by~$\beta$ is taken over the set~$\{\beta |\beta \in \mscr F,\, \mathrm{Tr}_{\mscr F/ \mscr F'} (\beta) = p\beta ' \}$.
\par
Either side of \eqref{eq:FCTC} does not depend on the choice of representatives that we make, but it will be convenient for us to fix a good representatives for the computation. Precisely speaking, we will regard~$Cl_-$ (resp.~$U'/U'_\mathrm{alg}$) as a subset of~$Cl_-$(resp.~$U/ U_{\mathrm{alg}}$), which is possible since the natural diagonal map induces an embedding. We first choose representatives for~$U'/ U'_\mathrm{alg}$ and~$\mcal D'$. Then we choose the representatives for~$U/ U_\mathrm{alg}$ and~$\mcal D$ by extending the chosen ones. Furthermore, the representatives for~$\mcal D$ and~$U/U_{\mathrm{alg}}$ can be chosen to be~$G$-sets, where~$G$ is the Galois group~$\Gal(\mscr M/\mscr M')$. In the rest of the subsection, we fix such representatives.
\par
The next two propositions are the criterions which we will use to prove \eqref{eq:FCTC}.
\begin{proposition}\label{prop:4.4.18}
Recall that~$G = \Gal(\mscr M'/\mscr M)$ acts on~$Z$, which extends to an action on~$\Lambda$. Suppose we have an element~$\lambda \in \Lambda$ of the form
\begin{align}
\lambda = \sum_{\gamma\in W} \lambda_\gamma
\end{align}
where the sum is taken over a finite set~$W$ with~$G$-action, and~$\lambda_\gamma\in \Lambda$. Let the action of~$G$ on~$\gamma \in W$ be written as~$\gamma\mapsto \gamma^g$ for~$g\in G$. Let~$W^G$ be the subset of~$W$ consisting of the elements of~$W$ that are fixed by~$G$. Then~$\lambda$ belongs to the trace ideal~$T$ if
\begin{align}
g \lambda_\gamma = \lambda_{\gamma^g}
\end{align}
for all~$\gamma\in W-W^G$ and all~$g\in G$, and
\begin{align}
\lambda_\gamma \in p\Lambda
\end{align}
for all~$\gamma \in W^G$.
\end{proposition}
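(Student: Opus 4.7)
The plan is to partition $W$ into its $G$-orbits and analyze each contribution to $\lambda$ separately. Since $|G|=p$ is prime, every $G$-orbit is either a singleton fixed point (an element of $W^G$) or a free orbit of exact size $p$. This dichotomy matches the two hypotheses of the proposition.

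For the free orbits, let $\gamma_1,\dots,\gamma_r$ be a set of representatives of the $G$-orbits in $W-W^G$. Applying the equivariance $g\lambda_\gamma=\lambda_{\gamma^g}$ orbit by orbit yields
\[
\sum_{\gamma\in W-W^G}\lambda_\gamma=\sum_{i=1}^r\sum_{g\in G}\lambda_{\gamma_i^g}=\sum_{i=1}^r\sum_{g\in G}g\cdot\lambda_{\gamma_i}=\sum_{i=1}^r\mathrm{Tr}_G(\lambda_{\gamma_i}),
\]
which lies in $T$ as a sum of traces, by definition of the trace ideal.

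For the fixed points $\gamma\in W^G$ we are told only that $\lambda_\gamma\in p\Lambda$. The useful identity is $\mathrm{Tr}_G(\mu)=|G|\mu=p\mu$ for $\mu\in\Lambda^G$, which gives $p\Lambda^G\subset\mathrm{Tr}_G(\Lambda)\subset T$. Thus if we can write $\lambda_\gamma=p\mu_\gamma$ with $\mu_\gamma\in\Lambda^G$, then $\lambda_\gamma=\mathrm{Tr}_G(\mu_\gamma)\in T$; summing over $W^G$ and combining with the free-orbit contribution would give $\lambda\in T$. The main obstacle is thus justifying that $\lambda_\gamma$ is $G$-invariant for $\gamma\in W^G$, which is not literally part of the hypothesis. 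In the intended application, however, this invariance should be automatic from the construction: the $\lambda_\gamma$'s will be built from $G$-equivariantly chosen representatives in $\mcal D$ and $U/U_{\mathrm{alg}}$ together with $G$-equivariant Eisenstein-series data, so a $\gamma$ fixed by $G$ forces the corresponding $\lambda_\gamma$ to be $G$-invariant, and the proposition follows.
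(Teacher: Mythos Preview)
Your approach is essentially the paper's: partition $W$ into $G$-orbits, handle the free orbits via the equivariance hypothesis, and treat the fixed points via $p\Lambda^G\subset T$. The paper packages this by setting $\overline\lambda_\gamma=\lambda_\gamma$ for $\gamma\in W-W^G$ and $\overline\lambda_\gamma=p^{-1}\lambda_\gamma$ for $\gamma\in W^G$, and asserting that $\lambda=\mathrm{Tr}_G\bigl(\sum_{\gamma\in W/G}\overline\lambda_\gamma\bigr)$.

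You are right to flag the missing invariance: for $\gamma\in W^G$, the condition $\lambda_\gamma\in p\Lambda$ alone does not force $\lambda_\gamma\in T$, and the paper's own computation $\mathrm{Tr}_G(p^{-1}\lambda_\gamma)=\lambda_\gamma$ tacitly uses $\lambda_\gamma\in\Lambda^G$ just as you do. The resolution is that in every application (see the verification of \eqref{eq:4.4.26} in the proof of Proposition~\ref{prop:4.4.30}, and the hypothesis \eqref{eq:4.4.20} of Proposition~\ref{prop:4.4.24}) the equivariance $g\lambda_\gamma=\lambda_{\gamma^g}$ is actually established for \emph{all} $\gamma\in W$; specializing to $\gamma\in W^G$ then gives $\lambda_\gamma\in\Lambda^G$, and combined with $\lambda_\gamma\in p\Lambda$ (and $p$-torsion-freeness of $\Lambda$) one gets $\lambda_\gamma\in p\Lambda^G\subset T$. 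So the proposition should be read with the equivariance hypothesis extended to all of $W$; with that reading, your argument and the paper's coincide.
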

\begin{proof}
Define
\begin{align}
\overline \lambda_\gamma
=
\begin{cases}
\lambda_\gamma &\text{ if } \gamma \in W-W^G
\\
p^{-1}\lambda_\gamma &\text { if } \gamma \in W^G
\end{cases}
\end{align}
for each~$\gamma \in W$. By the assumptions,~$\overline \lambda_\gamma$ is well defined and belongs to~$\Lambda$. It is easy to see that~$\lambda$ is the image of
\begin{align}
\sum_{\gamma \in W/G}\overline \lambda_\gamma
\end{align}
under the trace map, hence the proof is complete.
\end{proof}
A variant of the previous proposition is the following.
\begin{proposition}\label{prop:4.4.24}
Keep the notation from Proposition~\ref{prop:4.4.18}. Identify~$Z$ with the image of~$Z$ in~$\Lambda$ under the natural map~$Z\to\Lambda$. Suppose we have an element~$\lambda \in \Lambda$ of the form
\begin{align}
\lambda = \sum_{\gamma\in W} a_\gamma z_\gamma
\end{align}
 Then~$\lambda$ belongs to the trace ideal~$T$ if
\begin{align}\label{eq:4.4.20}
g z_\gamma = z_{\gamma^g}
\end{align}
for any~$\gamma\in W$ and any~$g\in G$,
\begin{align}\label{eq:4.4.21}
a_\gamma = a_{g\gamma}
\end{align}
for any~$\gamma \in W - W^G$, and
\begin{align}\label{eq:4.4.22}
a_\gamma \in p\Z_p
\end{align}
for any~$\gamma \in W^G$.
\end{proposition}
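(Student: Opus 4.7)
The plan is to reduce this statement directly to Proposition~\ref{prop:4.4.18} by making the obvious choice of decomposition. I would set $\lambda_\gamma := a_\gamma z_\gamma \in \Lambda$ for each $\gamma \in W$, so that $\lambda = \sum_{\gamma \in W} \lambda_\gamma$ exhibits precisely the shape required by the hypothesis of Proposition~\ref{prop:4.4.18}. It then suffices to verify the two conditions of that proposition, namely $G$-equivariance of $\gamma \mapsto \lambda_\gamma$ on $W - W^G$ and divisibility of $\lambda_\gamma$ by $p$ on $W^G$.

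For the equivariance, the key point I would invoke is that the $G$-action on $\Lambda$ is $\Z_p$-linear, since it arises from the action on $Z$ extended linearly and the coefficient ring $\mcal I$ contains $\Z_p$ on which $G$ acts trivially. Consequently
\begin{align*}
g \lambda_\gamma \;=\; a_\gamma \cdot g z_\gamma \;=\; a_\gamma z_{\gamma^g}
\end{align*}
by \eqref{eq:4.4.20}, and this equals $a_{\gamma^g} z_{\gamma^g} = \lambda_{\gamma^g}$ by \eqref{eq:4.4.21}. For the divisibility, condition \eqref{eq:4.4.22} gives $a_\gamma \in p\Z_p$ for each $\gamma \in W^G$, whence $\lambda_\gamma = a_\gamma z_\gamma \in p\Lambda$ is immediate.

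With both conditions checked, Proposition~\ref{prop:4.4.18} directly yields $\lambda \in T$. There is no substantive obstacle in this proof; it is essentially a repackaging of the previous proposition tailored to the situation where each summand $\lambda_\gamma$ factors as a $\Z_p$-scalar times an element of $Z$. The only thing worth being careful about is that the coefficients $a_\gamma$ are taken from $\Z_p$ (which is fixed by $G$) rather than from $\Lambda$ itself, since otherwise the computation $g(a_\gamma z_\gamma) = a_\gamma \cdot g z_\gamma$ would require an additional hypothesis on the $G$-invariance of the $a_\gamma$.
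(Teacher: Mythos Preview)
Your proof is correct and takes exactly the same approach as the paper, which simply states that the result is a special case of Proposition~\ref{prop:4.4.18} with $\lambda_\gamma = a_\gamma z_\gamma$. Your version is in fact more detailed than the paper's one-line justification, explicitly verifying both conditions and noting the $\Z_p$-linearity of the $G$-action.
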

\begin{proof}
It is a special case of Proposition~\ref{prop:4.4.18}, where we have taken~$\lambda_\gamma$ to be of the form~$a_\gamma z_\gamma$.
\end{proof}
We introduce a condition from which we deduce \eqref{eq:FCTC}, which will be proved later under a mild assumption.
\begin{itemize}
\labitem{C}{C} Let~$G = \Gal(\mscr M / \mscr M')$. Then
\begin{align}\label{eq:C}
\mcal C \cdot\sum_{a \in \mcal D^G}\rec_\mscr M(a)
=
\ver\left(
\mcal C '\cdot
\sum_{a'\in \mcal D'}\rec_{\mscr M'}(a')\right)
\end{align}
holds.
\end{itemize}
\begin{proposition}\label{prop:4.4.30}
Assume \eqref{C}. Then 
\eqref{eq:FCTC} holds.
\end{proposition}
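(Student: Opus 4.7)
The plan is to unfold both sides of \eqref{eq:FCTC} as sums indexed by $G$-sets, with $G=\Gal(\mscr M/\mscr M')$, split off the $G$-fixed contribution, dispatch the remainder to the trace ideal via Proposition~\ref{prop:4.4.18}, and then match the fixed contribution to $\ver(B'(\beta',\mfrak c'))$ by means of hypothesis~\eqref{C}.

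First I would expand
\begin{align*}
\sum_{\beta}B(\beta,\mfrak c)=\mcal C\cdot\sum_{a\in\mcal D}\rec_{\mscr M}(a)\sum_{u\in U/U_{\mathrm{alg}}}\sum_{\beta}A(\beta,\mfrak c(a);u),
\end{align*}
where the innermost sum ranges over $\beta\in\mscr F$ with $\mathrm{Tr}_{\mscr F/\mscr F'}(\beta)=p\beta'$. By the choice of representatives, the index set $W=\mcal D\times(U/U_{\mathrm{alg}})\times\{\beta:\mathrm{Tr}(\beta)=p\beta'\}$ carries a natural $G$-action, and a triple $(a,u,\beta)$ lies in $W^{G}$ exactly when $a\in\mcal D^{G}$, $u\in(U/U_{\mathrm{alg}})^{G}$, and $\beta\in\mscr F'$; the trace condition then forces $\beta=\beta'$.

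For the non-fixed orbits I would apply Proposition~\ref{prop:4.4.18} with $\lambda_{(a,u,\beta)}=\mcal C\cdot\rec_{\mscr M}(a)\cdot A(\beta,\mfrak c(a);u)$. The required equivariance $g\lambda_{\gamma}=\lambda_{\gamma^{g}}$ reduces to the $G$-equivariance of $\rec_{\mscr M}$ together with the explicit formulas of Lemma~\ref{lemma:361} and Lemma~\ref{lemma:3.5.6}: every ingredient appearing there (local reciprocity maps, additive characters $\psi_{v}$, indicator functions of $\mscr O_{v}$, and the Gauss-type sums over $\mfrak l^{-j_{0}}/\mfrak l^{j_{1}}$) is assembled from Galois-equivariant data with respect to the action of $G$ on the places of $\mscr F$ above each place of $\mscr F'$. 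Since the subsum over $W-W^{G}$ contains no fixed points, Proposition~\ref{prop:4.4.18} yields that this contribution lies in $T$.

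It remains to match the $W^{G}$-part with $\ver(B'(\beta',\mfrak c'))$. Identifying $\mcal D^{G}$ with $\mcal D'$ and $(U/U_{\mathrm{alg}})^{G}$ with $U'/U'_{\mathrm{alg}}$ under the natural embeddings, the fixed part becomes $\mcal C\cdot\sum_{a'\in\mcal D'}\rec_{\mscr M}(a')\sum_{u'\in U'/U'_{\mathrm{alg}}}A(\beta',\mfrak c(a');u')$. Using the transfer-reciprocity compatibility $\rec_{\mscr M}=\ver\circ\rec_{\mscr M'}$ on $\adele_{\mscr M'}^{\times}$, the place-by-place identification $A(\beta',\mfrak c(a');u')=\ver\bigl(A'(\beta',\mfrak c'(a');u')\bigr)$, and hypothesis~\eqref{C} to exchange the factor $\mcal C\cdot\sum_{a'}\rec_{\mscr M}(a')$ for $\ver\bigl(\mcal C'\cdot\sum_{a'}\rec_{\mscr M'}(a')\bigr)$, the fixed part collapses into $\ver(B'(\beta',\mfrak c'))$, completing the argument. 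The main obstacle is the pointwise identity of Fourier coefficients $A(\beta',\mfrak c(a');u')=\ver\bigl(A'(\beta',\mfrak c'(a');u')\bigr)$: at places $v'\mid\mfrak I D_{\mscr M'/\mscr F'}$ the Gauss-sum expression of Lemma~\ref{lemma:3.5.6} must be transported across the degree-$p$ extension $\mscr F/\mscr F'$ and reconciled with its analogue over $\mscr M'$, and it is here that hypothesis~\eqref{P'} (that $\mfrak d_{\mscr F/\mscr F'}=p\mscr O$) enters, ensuring that the relevant conductors, discriminants, and additive characters match appropriately.
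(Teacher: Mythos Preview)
Your overall architecture---index by a $G$-set $W$, push the non-fixed part into $T$ via Proposition~\ref{prop:4.4.18}, and match the fixed part against $\ver(B'(\beta',\mfrak c'))$ using hypothesis~\eqref{C}---is exactly the paper's strategy. The gap is in your treatment of the fixed part.

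You assert a \emph{pointwise identity}
\[
A(\beta',\mfrak c(a');u')=\ver\bigl(A'(\beta',\mfrak c'(a');u')\bigr),
\]
but this is false in general, and the paper never claims it. What is actually true, and what the paper proves, is only the place-by-place \emph{congruence}
\[
\prod_{v\mid v'} A(\beta',v,\bs c(a')_v)\equiv \ver\bigl(A'(\beta',v',\bs c'(a')_{v'})\bigr)\pmod{T}
\]
for each finite $v'\nmid p$. Two representative obstructions show why equality cannot hold. First, if $v'$ is inert in $\mscr F$ and $v'\nmid\mfrak D$, then $|\varpi_v|_v=|\varpi_{v'}|_{v'}^{p}$, so the geometric-series factors in Lemma~\ref{lemma:361} differ genuinely and agree only modulo $p$; one needs another application of the trace-ideal criterion to absorb the discrepancy. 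Second, if $v'$ splits in $\mscr F$ (say $v'\nmid\mfrak D$), then $\prod_{v\mid v'}\bigl(\sum_{j=0}^{n_v}\cdots\bigr)$ expands as a sum over tuples $(j_v)_{v\mid v'}$, and only the diagonal tuples $j_v\equiv\text{const}$ correspond to $\ver$ of the $v'$-term; the off-diagonal tuples form a free $G$-set and must themselves be sent into $T$ by a second invocation of Proposition~\ref{prop:4.4.18}. The same phenomenon occurs for the Gauss-type sums at $v'\mid\mfrak I D_{\mscr M'/\mscr F'}$. In the paper this is the content of the chain of lemmas following \eqref{eq:4.4.29}, split according to the splitting behaviour of $v'$ in $\mscr F$ and in $\mscr M'$; it is the bulk of the proof, and your proposal skips it.

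A smaller point: you identify $(U/U_{\mathrm{alg}})^{G}$ with $U'/U'_{\mathrm{alg}}$, but the paper does not rely on this. Instead it observes that for $u$ fixed by $G$ but not in (the image of) $U'$, the indicator $\mathbb I_{u(1+\varpi_p\mscr O_p)}(\beta')$ vanishes since $\beta'\in\mscr F'$, so those terms drop out automatically.
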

\begin{proof}
Unfolding the definition, \eqref{eq:FCTC} is
\begin{align}\label{eq:4.4.25}
\mcal C\cdot \sum_{u\in U/ U_{\mathrm{alg}}}\sum_a \sum_\beta A(\beta, \mfrak c(a); u)
\equiv
\ver\left( \mcal C'\cdot \sum_{u'\in U'/ U'_{\mathrm{alg}}}\sum_{a'} A'(\beta ', \mfrak c ' (a') ; u')\right)
\end{align}
where the sum on the left hand side is indexed over the triples~$(\beta,a,u)$, where~$u\in U/U_{\mathrm{alg}}$,~$a \in \mcal D$, and~$\beta\in \mscr F$ such that~$\mathrm{Tr}(\beta) = p \beta'$, and the sum on the right hand side is indexed over the pairs~$(u',a')$ where~$u' \in U'/U'_{\mathrm{alg}}$ and~$a' \in \mcal D'$.
\par
We first show~$G$ acts on the term~$A (\beta, \mfrak c(a);u)$ by
\begin{align}\label{eq:4.4.26}
g A (\beta, \mfrak c(a);u) =  A (\beta^g, \mfrak c(a^g);u^g).
\end{align}
Recall the definitions
\begin{align}\label{eq:4.4.27}
 A^{(p)}(\beta,\mfrak c)
&=
\rec^\infty(\beta)\mathrm{Norm}_{\mscr F/\Q}(\beta^{-1})\prod_{v\nmid p}  A(\beta,v,\bs c_v),
\\
A(\beta,\mfrak c;u)
&=
A^{(p)}(\beta,\mfrak c)\rec_{\Sigma_p}(\beta) \mbb I_{u(1+\varpi_p \mscr O_p)}(\beta),
\end{align}
from which we deduce that
\begin{align}
&gA(\beta,\mfrak c;u)
\\
=&gA^{(p)}(\beta,\mfrak c)\times g\rec_{\Sigma_p}(\beta) \times \mbb I_{u(1+\varpi_p \mscr O_p)}(\beta)
\\
=&gA^{(p)}(\beta,\mfrak c)\times \rec_{\Sigma_p}(\beta^g) \times \mbb I_{u^g(1+\varpi_p \mscr O_p)}(\beta^g).
\end{align}
Therefore \eqref{eq:4.4.26} is reduced to
\begin{align}
gA^{(p)}(\beta,\mfrak c(a)) = A^{(p)}(\beta^g,\mfrak c(a^g)),
\end{align}
which is further reduced, using \eqref{eq:4.4.27}, to the assertion that
\begin{align}\label{eq:017}
g\prod_{v\mid v'}A (\beta, v, \bs c(a)_v) = 
\prod_{v\mid v'}A (\beta^g, v, \bs c(a^g)_{v})
\end{align}
holds for each~$v' \nmid p$. Indeed, first we observe
\begin{align}
g\prod_{v\mid v'}A (\beta, v, \bs c(a)_v) 
=&
\prod_{v\mid v'}gA (\beta, v, \bs c(a)_v) 
\\
=&
\prod_{v\mid v'}A (\beta^g, v^g, \left(\bs c(a)_v\right)^g) 
\end{align}
where the second equality follows from formulae in Lemma~\ref{lemma:361}. Continuing the computation, we obtain
\begin{align}
\prod_{v\mid v'}A (\beta^g, v^g, \left(\bs c(a)_v\right)^g) 
=&
\prod_{v\mid v'}A (\beta^g, v^g, \bs c(a^g)_{v^g}) 
\\
=&
\prod_{v^g\mid v'}A (\beta^g, v^g, \bs c(a^g)_{v^g}) 
\\
=&
\prod_{v\mid v'}A (\beta^g, v, \bs c(a^g)_{v}) 
\end{align}
where we permuted the index set by~$v\mapsto v^g$ in the second equality.
\par
Define the index set~$W$ to be
\begin{align}
W = \{(\beta, a, u)| \mathrm{Tr}_{\mscr F/ \mscr F'}(\beta) = p\beta', a \in \mcal D, u \in U/U_{\mathrm{alg}}\}
\end{align}
on which~$G$ acts by
\begin{align}
g(\beta, a, u)= (\beta^g, a^g, u^g.)
\end{align}
In other words, the triple summation appearing in the left had side of \eqref{eq:4.4.25} is taken over the set~$W$. By Proposition~\ref{prop:4.4.24}, the validity of \eqref{eq:4.4.26} implies that
\begin{align}
\sum_{(\beta,a,u)\in W-W^G}A(\beta,\mfrak c(a); u) \in T
\end{align}
and it follows that
\begin{align}
\mcal C \cdot \sum_{(\beta,a,u)\in W-W^G}A(\beta,\mfrak c(a); u) \in T
\end{align}
because~$T$ is an ideal.
\par
Now it is sufficient to prove
\begin{align}\label{eq:02}
\mcal C \cdot \sum_{(\beta,a,u)\in W^G}
 \rec_\mscr M(a)
 A(\beta, \mfrak c(a); u)
\equiv
\ver\left(\mcal C' \cdot \sum_{(u',a')}
\rec_{\mscr M'}(a') A' (\beta', \mfrak c'(a');u')\right)
\hspace{5mm} \text{(mod~$T$)}
\end{align}
in order to show Proposition~\ref{prop:4.4.30}. In fact, the terms on the left hand side with~$u\not \in U'$ are zero, since~$\beta ^g = \beta$ and~$\mathrm{Tr}(\beta)=p\beta'$ imply~$\beta = \beta'$, and for those~$u$ we have~$\mbb I_{u(1 + \varpi_v \mscr O_v)}(\beta)=0$, whence~$ A(\beta, \mfrak c(a); u)$ is zero. We have reduced \eqref{eq:02} to the assertion
\begin{align}\label{eq:4.4.50}
\mcal C \cdot \sum_{(u',a)}
\rec_\mscr M(a) A (\beta', \mfrak c(a); u)
\equiv
\ver\left(\mcal C' \cdot \sum_{(u',a')}
\rec_\mscr M(a)  A' (\beta', \mfrak c'(a');u')\right)
\hspace{5mm} \text{(mod~$T$)},
\end{align}
where~$u'$ varies in~$U'$ in both sides, and~$a'$ varies in~$\mcal D'$ on the right hand side, and~$a$ varies among elements of~$\mcal D$ such that~$\mfrak c(a^g) = \mfrak c(a)$. By identifying~$\mcal D^G = \mcal D'$, we write
\begin{align}\label{eq:4.4.51}
\mcal C \cdot \sum_{(u',a')}
\rec_\mscr M(a) A (\beta', \mfrak c(a'); u)
\equiv
\ver\left(\mcal C' \cdot \sum_{(u',a')}
\rec_\mscr M(a)  A' (\beta', \mfrak c'(a');u')\right)
\hspace{5mm} \text{(mod~$T$)},
\end{align}
and the index sets on both sides are the same. In particular, we can further reduce \eqref{eq:4.4.51} to the assertion that for each~$u' \in U'$
\begin{align}\label{eq:03}
\mcal C \cdot \sum_{a'}
\rec_\mscr M(a') A (\beta', \mfrak c(a'); u')
\equiv
\ver\left(\mcal C' \cdot \sum_{a'}
\rec_\mscr M(a)  A' (\beta', \mfrak c'(a');u')\right)
\hspace{5mm} \text{(mod~$T$)}
\end{align}
holds. We rewrite \eqref{eq:03} equivalently in the form
\begin{align}\label{eq:4.4.53}
&\mcal C \cdot \sum_{a'}
\rec_\mscr M(a')\times
\mbb I_{u'(1+\varpi_p \mscr O_p)}(\beta')\times
\rec_{\Sigma_p}(\beta')\times
\prod_{v'\nmid p}
\left(\prod_{v\mid v'} A (\beta',v, \bs c(a')_v)\right)
\notag
\\
\equiv
&\ver\left(
\mcal C' \cdot \sum_{a'}
\rec_\mscr M(a') \times
\mbb I_{u'(1+\varpi'_{p} \mscr O_{p}')}(\beta')\times
\rec_{\Sigma'_p}(\beta')\times
\prod_{v'\nmid p} A (\beta', v',\bs c'(a')_{v'})\right)
\hspace{5mm} \text{(mod~$T$)}.
\end{align}
Since we assumed \eqref{C}, it suffices to show that
\begin{align}\label{eq:04}
\mbb I_{u'(1+\varpi_p \mscr O_p)}(\beta')\rec_{\Sigma_p}(\beta')
=
\ver\left(\mbb I_{u'(1+\varpi'_p \mscr O_{p}')}(\beta')\rec_{\Sigma'_p}(\beta')\right)
\end{align}
holds, and for each~$v' \nmid p$ and each~$a' \in \mcal D'$,
\begin{align}\label{eq:4.4.29}
\prod_{v\mid v'} A (\beta',v, \bs c(a')_v)
\equiv
\ver\left(
A (\beta', v',\bs c'(a')_{v'})
\right)
\hspace{5mm} \text{(mod~$T$)}
\end{align}
holds. Indeed, we deduce
\begin{align}
&\mcal C \cdot \sum_{a \in \mcal D^G}\rec_\mscr M(a)
\prod_{v'\nmid p}
\left(\prod_{v\mid v'} A (\beta',v, \bs c(a)_v)\right)
\\
\label{eq:4.4.58b}
\equiv&
\mcal C' \cdot \sum_{a' \in \mcal D'}\rec_{\mscr M'}(a')
\cdot\ver\left(
\prod_{v'\nmid p}
A (\beta',v', \bs c'(a')_{v'})
\right)
\\
\equiv&
\ver\left(
\mcal C' \cdot \sum_{a' \in \mcal D'}\rec_{\mscr M'}(a')
\right)
\cdot
\ver\left(
\prod_{v'\nmid p}
A (\beta',v', \bs c'(a')_{v'})
\right)
\hspace{5mm} \text{(mod~$T$)}
\end{align}
where the first and the second congruence follows from \eqref{eq:4.4.29} and \eqref{C}, respectively. By multiplying \eqref{eq:04} to the congruence
\begin{align}
&\mcal C \cdot \sum_{a \in \mcal D^G}\rec_\mscr M(a)
\prod_{v'\nmid p}
\left(\prod_{v\mid v'} A (\beta',v, \bs c(a)_v)\right)
\\
\equiv&
\ver\left(
\mcal C' \cdot \sum_{a' \in \mcal D'}\rec_{\mscr M'}(a')
\prod_{v'\nmid p}
A (\beta',v', \bs c'(a')_{v'})
\right)
\hspace{5mm} \text{(mod~$T$)}
\end{align}
we obtain \eqref{eq:4.4.53}, which is equivalent to \eqref{eq:03}.
\par
From the definition of~$u$ and~$u'$ \eqref{eq:04} follows immediately, and we now prove \eqref{eq:4.4.29} by dividing it into several lemmas.
\begin{lemma}
Suppose~$v'$ splits in~$\mscr F$,~$v'$ splits in~$\mscr M'$, and the place~$w'$ of~$\mscr M'$ lying above~$v'$ divides~$\mfrak F$. Then, the assertion of \eqref{eq:4.4.29} is true.
\end{lemma}
\begin{proof}
From Lemma~\ref{lemma:361}, we observe that the assertion of \eqref{eq:4.4.29} is just
\begin{align}\label{eq:4.4.58}
\prod_{v\mid v'}
\rec_w(\beta) \mbb I_{\mscr O_v^\times}(\beta')
\equiv
\ver\left(
\rec_{w'}(\beta') \mbb I_{{\mscr O'_{v'}}^\times}(\beta')
\right)
\hspace{5mm} \text{(mod~$T$)},
\end{align}
which follows immediately from the definition of~$\ver$ in terms of ideles. In fact, the equality holds in that case as well.
\end{proof}
\begin{lemma}\label{lemma:4.4.63}
Suppose~$v'$ splits in~$\mscr F$, and~$v'$ does not divide~$\mfrak D$. Further assume that~$v'$ is not ramified in~$\mscr F$, which is satisfied if we assume \eqref{P}. Then, the assertion of \eqref{eq:4.4.29} is true.
\end{lemma}
\begin{proof}
If we use the formulae in Lemma~\ref{lemma:361}, in this case \eqref{eq:4.4.29} becomes
\begin{align}
\label{eq:4.4.59}
&\prod_{v\mid v'}
\left(
\sum_{j=0}^{\val_v(\beta'\bs c(a')_v)}\rec_w(\varpi_w^j \bs c(a')_v) |\varpi_w|_w ^{-j} \mathbb I_{\mscr O_v}(\beta' \bs c(a')_v)
\right)
\\
\equiv
&\ver\left(
\sum_{j=0}^{\val_{v'}(\beta'\bs c'(a')_{v'})}\rec_{w'}({\varpi_{w'}}^j \bs c'(a')_{v'}) |\varpi'_{w'}|_{w'} ^{-j} \mathbb I_{\mscr O'_{v'}}(\beta' \bs c'(a')_{v'})
\right)
\hspace{5mm} \text{(mod~$T$)}
\end{align}
because~$v'$ does not divide~$\mfrak D$. We will apply Proposition~\ref{prop:4.4.24} after rearranging the terms of  \eqref{eq:4.4.59}. Define
\begin{align}
n_v =\val_v(\beta ' \bs c(a')_v)
\end{align}
and we take the set~$W$ to be
\begin{align}
W = \{(j_v)_{v|v'} |j_v=0,1,2,\cdots, n_v\}
\end{align}
consisting of tuples of integers indexed by the set of places~$v$ of~$\mscr F$ dividing~$v'$. We equip~$W$ with the~$G$-action
\begin{align}\label{eq:4.4.68}
\left((j_v)_{v|v'}\right)^g = (j_{v^{g^{-1}}})_{v|v'}
\end{align}
given by the opposite action of~$G$ on the indices. Expanding the product in \eqref{eq:4.4.59}, we obtain the sum
\begin{align}
\sum_{j\in W}\lambda_j 
\end{align}
where~$j$ denotes the tuple~$(j_v)_{v|v'}$, and for each~$j\in W$,~$\lambda_j$ is given by
\begin{align}
\prod_{v|v'}\rec_w(\varpi_w^{j_v} \bs c(a')_v) |\varpi_w|_w ^{-{j_v}} \mathbb I_{\mscr O_v}(\beta' \bs c(a')_v).
\end{align}
By routine calculations, letting~$h=g^{-1}$ for simplicity, we obtain
\begin{align}
g\lambda_j
&=
g\prod_{v|v'}\rec_w(\varpi_w^{j_v} \bs c(a')_v) |\varpi_w|_w ^{-{j_v}} \mathbb I_{\mscr O_v}(\beta' \bs c(a')_v)
\\
&=
\prod_{v|v'}\rec_{w^g}(\varpi_{w^g}^{j_v} \bs c(a')_{v^g}) |\varpi_{w^g}|_{w^g} ^{-{j_v}} \mathbb I_{\mscr O_{v^g}}(\beta' \bs c(a')_{v^g})
\\
&=
\prod_{v|v'}\rec_w(\varpi_w^{j_{v^{h}}} \bs c(a')_v) |\varpi_w|_w ^{-{j_{v^{h}}}} \mathbb I_{\mscr O_v}(\beta' \bs c(a')_v)
\\
&=
\lambda_{j^g},
\end{align}
which is the first condition of Proposition~\ref{prop:4.4.18}. We deduce that
\begin{align}
\sum_{j \in W-W^G}\lambda_j \in T
\end{align}
and it remains to show that
\begin{align}\label{eq:4.4.76}
\sum_{j \in W^G} \lambda_j \equiv
\ver\left(
\sum_{j=0}^{\val_{v'}(\beta'\bs c'(a')_{v'})}\rec_{w'}({\varpi_{w'}}^j \bs c'(a')_{v'}) |\varpi'_{w'}|_{w'} ^{-j} \mathbb I_{\mscr O'_{v'}}(\beta' \bs c'(a')_{v'})
\right)
\hspace{5mm} \text{(mod~$T$)}
\end{align}
holds. Firstly, it is clear that~$W^G$ consists of the parallel tuples~$(j_v)_{v|v'}$ such that~$j_v = n$ for some~$n=0,1,2,\cdots, \val_{v}(\beta' \bs c(a')_v)$. By our assumption,~$v'$ is unramified in~$\mscr F$. Noting that our~$\bs c$ is fixed by~$G$, we observe
\begin{align}
\val_{v'}(\beta' \bs c(a')_{v'}) = \val_v(\beta' \bs c(a')_v)
\end{align}
from which we deduce that \eqref{eq:4.4.76} holds as an equality. The proof of the lemma is complete.
\end{proof}

\begin{lemma}\label{lemma:4.4.78}
Suppose~$v'$ splits in~$\mscr F$, and~$v'$ is either inert or ramified in~$\mscr M'$. Then, the assertion of \eqref{eq:4.4.29} is true.
\end{lemma}
\begin{proof}
According to the definition of~$A(\beta', v, \bs c(a')_v)$,
\begin{align}\label{eq:4.4.79}
A(\beta',v,\bs c(a')_v) = 
\psi_v\left(\frac{-t_v}{2d_{\mscr F_v}}\right)\mathrm{Vol}(\mfrak l^{j_1}, dx)\sum_{x\in \mfrak l^{-j_0}/\mfrak l^{j_1}} \rec_v (x + 2^{-1} \bs \theta_v )^{-1} \psi_v \left( \frac{-\beta' x}{d_{{\mscr F_v}}}\right)
\end{align}
given in Lemma~\ref{lemma:3.5.6}, for any pair of sufficiently large integers~$j_0$ and~$j_1$. Since it does not depend on the finite idele~$\bs c(a')_v$, and we will consider a fixed~$\beta'$, we simply write
\begin{align}
A(v)=A(\beta', v, \bs c(a')_v)
\end{align}
and similarly write
\begin{align}
A'(v')=A'(\beta', v', \bs c'(a')_{v'})
\end{align}
temporarily. Recall that we defined Remark~\ref{rmk:2.4.18}. Note that
\begin{align}
\prod_{v|v'}\psi_v\left(\frac{-t_v}{2d_{\mscr F_v}}\right) = \psi_{v'}\left(\frac{-pt'_{v'}}{2d_{\mscr F_{v}}}\right)=\psi_{v'}\left(\frac{-t'_{v'}}{2d_{\mscr F'_{v'}}}\right)
\end{align}
holds. Also note that
\begin{align}
\vol(\mfrak l^{j_1}, dx_v) =\vol(\mfrak l'^{j_1}, dx')
\end{align}
holds, where~$dx_v$ and~$dx'$ are Haar measures on~$\mscr F_v$ and~$\mscr F'_{\mscr F'}$ such that
\begin{align}
\vol(\mscr O_v, dx_v) = \vol(\mscr O'_{v'}, dx')=1.
\end{align}
Of course,~$dx_v$ and~$dx$ are the same, and the subscript merely specifies the place~$v$. Also note that
\begin{align}
g\cdot \rec_v (x + 2^{-1} \bs \theta_v )^{-1} \psi_v \left( \frac{-\beta' x}{d_{{\mscr F_v}}}\right)
=
\rec_v (x^g + 2^{-1} \bs \theta_v )^{-1} \psi_v \left( \frac{-\beta' x}{d_{{\mscr F_v}}}\right)
\end{align}
for any~$g \in G$ and any~$x \in \mfrak l^{-j_0}/\mfrak l^{j_1}$. Using these properties, one can show the assertion of the lemma by an argument similar to the proof of Lemma~\ref{lemma:4.4.63}. One may take the index set~$W$ to be
\begin{align}
W = \{(x_v)_{v|v'} | x_v \in \mfrak l_v^{-j_0}/\mfrak l_v^{j_1}\}
\end{align}
where~$\mfrak l_v$ is the maximal ideal of~$\mscr O_v$, and the~$G$-action on~$W$ is given by
\begin{align}
\left((x_v)_{v|v'}\right)^g = (x_{v^{g^{-1}}})_{v|v'}
\end{align}
which is the inverse action on the indices similar to \eqref{eq:4.4.68}. We omit the details.
\end{proof}
\begin{lemma}
Suppose~$v'$ is inert in~$\mscr F$, and~$v'$ does not divide~$\mfrak D'$. Then, the assertion of \eqref{eq:4.4.29} is true.
\end{lemma}
\begin{proof}
In this case, there is a unique place~$v$ of~$\mscr F$ lying above~$v'$. Because~$v'$ is inert in~$\mscr F$, we have
\begin{align}
|\varpi_{v'}|^p_{v'}=|\varpi_v|_v 
\end{align}
and, in particular, we have
\begin{align}\label{4.4.90}
|\varpi_{v'}|^p_{v'}\equiv |\varpi_v|_v \hspace{5mm}\text{(mod~$p$)}.
\end{align}
Also, since~$v$ is unramified over~$v'$, we have
\begin{align}
\val_{v'}(\beta' \bs c(a')_{v'}) &=\val_{v}(\beta' \bs c(a')_{v}).
\end{align}
The assertion of the lemma is
\begin{align}
&\sum_{j =0}^{\val _v(\beta \bs c(a')_v)}
\rec_{\mscr M,v}(\varpi_v^j \bs c(a')_v)|\varpi_v|_v^{-j} \mbb I_{\mscr O_v} ( \beta \bs c(a')_v)
\\
\equiv
&\ver\left(
\sum_{j =0}^{\val _{v'}(\beta \bs c(a')_{v'})}
\rec_{\mscr M',{v'}}(\varpi_{v'}^j \bs c'(a')_{v'})|\varpi_{v'}|_{v'}^{-j} \mbb I_{\mscr O'_{v'}} ( \beta' \bs c'(a')_{v'})
\right)
\hspace{5mm}\text{(mod~$T$)}
\end{align}
which equivalent to
\begin{align}
&\sum_{j =0}^{n}
\rec_{\mscr M,v}(\varpi_v^j \bs c(a')_v)|\varpi_v|_v^{-j} \mbb I_{\mscr O_v} ( \beta \bs c(a')_v)
\\
\equiv
&
\sum_{j =0}^{n}
\rec_{\mscr M,v}(\varpi_v^j \bs c(a')_v)|\varpi_{v'}|_{v'}^{-j} \mbb I_{\mscr O_v} ( \beta \bs c(a')_v)
\hspace{5mm}\text{(mod~$T$)}
\end{align}
where we have written~$n=\val_{v'}(\beta' \bs c(a')_{v'}) =\val_{v}(\beta' \bs c(a')_{v})$.
We apply Proposition~\ref{prop:4.4.24} to
\begin{align}
&\sum_{j =0}^{n}
\rec_{\mscr M,v}(\varpi_v^j \bs c(a')_v)\left(|\varpi_v|_v^{-j}-|\varpi_{v'}|_{v'}^{-j}\right) \mbb I_{\mscr O_v} ( \beta \bs c(a')_v).
\end{align}
Since~$\bs c$ is the image of~$\bs c'$, it is fixed by~$G$, and hence
\begin{align}
g\cdot \rec_{\mscr M,v}(\varpi_v^j \bs c(a')_v) = \rec_{\mscr M,v}(\varpi_v^j \bs c(a')_v)
\end{align}
and the criterion of Proposition~\ref{prop:4.4.24} is satisfied by our previous observation \eqref{4.4.90}. The proof of the lemma is complete.
\end{proof}
\begin{lemma}
Suppose~$v'$ is inert in~$\mscr F$, and~$v'$ is splits in~$\mscr M'$. Then, the assertion of \eqref{eq:4.4.29} is true.
\end{lemma}
\begin{proof}
From the formula of Lemma~\ref{lemma:361}, it immediately follows from the fact that the equality
\begin{align}
\ver \left(\rec_{\mscr M', v'}(\beta' )\mbb I_{\mscr O'_{v'}} ( \beta').\right)=\rec_{\mscr M, v}(\beta' )\mbb I_{\mscr O_v} ( \beta')
\end{align}
holds. In particular, the congruence claimed in \eqref{eq:4.4.29} holds.
\end{proof}
\begin{lemma}
Suppose~$v'$ is inert in~$\mscr F$, and~$v'$ is either inert or ramified in~$\mscr M'$. Then, the assertion of \eqref{eq:4.4.29} is true.
\end{lemma}
\begin{proof}
Keeping the notation from Lemma~\ref{lemma:4.4.78}, we begin with the definition
\begin{align}\label{eq:4.4.790}
A(v)= 
\psi_v\left(\frac{-t_v}{2d_{\mscr F_v}}\right)\mathrm{Vol}(\mfrak l^{j_1}, dx)\sum_{x\in \mfrak l^{-j_0}/\mfrak l^{j_1}} \rec_v (x + 2^{-1} \bs \theta_v )^{-1} \psi_v \left( \frac{-\beta' x}{d_{{\mscr F_v}}}\right)
\end{align}
given in Lemma~\ref{lemma:3.5.6}, for any pair of sufficiently large integers~$j_0$ and~$j_1$. Increasing~$j_0$ and~$j_1$ if necessary, we have the same definition
\begin{align}
A'(v') = 
\psi_{v'}\left(\frac{-t'_{v'}}{2d_{\mscr F'_{v'}}}\right)\mathrm{Vol}({\mfrak l'}^{j_1}, dx)\sum_{x\in {\mfrak l'}^{-j_0}/{\mfrak l'}^{j_1}} \rec_{v'} (x + 2^{-1} \bs \theta_{v'} )^{-1} \psi_{v'} \left( \frac{-\beta' x}{d_{{\mscr F'_{v'}}}}\right)
\end{align}
with the same pair of integers~$j_0$ and~$j_1$ that we used for \eqref{eq:4.4.790}. Note that
\begin{align}
\psi_{v'}\left(\frac{-t'_{v'}}{2d_{\mscr F'_{v'}}}\right)=\psi_{v}\left(\frac{-t_{v}}{2d_{\mscr F_{v}}}\right)
\end{align}
since~$\psi_v = \psi_{v'} \circ \mathrm{Tr}_{\mscr F/ \mscr F'}$,~$d_{\mscr F_v} = pd_{\mscr F'_{v'}}$, and~$t'_{v'}=t_v$. Also note that~$\bs \theta_{v}$ is the image of the~$\bs \theta_{v'}$ under the diagonal embedding. Using these two facts, we obtain
\begin{align}\label{eq:4.4.840}
\ver\left(A'(v') \right)
&= 
\psi_v\left(\frac{-t_v}{2d_{\mscr F_v}}\right)\mathrm{Vol}({\mfrak l'}^{j_1}, dx')\sum_{x\in {\mfrak l'}^{-j_0}/{\mfrak l'}^{j_1}} \rec_{v} (x + 2^{-1} \bs \theta_{v'} )^{-1} \psi_v \left( \frac{-\beta' x}{d_{{\mscr F_v}}}\right)
\end{align}
where~$dx'$ denotes the Haar measure on~$\mscr O'_{v'}$ such that~$\vol(\mscr O'_{v'}, dx')=1$. Since~$v'$ is inert in~$\mscr F$, we have
\begin{align}
\vol(\mfrak l'^{j_1}, dx')^p = \vol(\mfrak l^{j_1}, dx)
\end{align}
and, in particular,
\begin{align}
\vol(\mfrak l'^{j_1}, dx') \equiv \vol(\mfrak l^{j_1}, dx) \hspace{5mm} \text{(mod~$p$)}
\end{align}
holds. From the equation \eqref{eq:4.4.840}, we obtain
\begin{align}\label{eq:4.4.870}
\ver\left(A'(v') \right)
&\equiv
\psi_v\left(\frac{-t_v}{2d_{\mscr F_v}}\right)\mathrm{Vol}({\mfrak l}^{j_1}, dx)\sum_{x\in {\mfrak l'}^{-j_0}/{\mfrak l'}^{j_1}} \rec_{v} (x + 2^{-1} \bs \theta_{v'} )^{-1} \psi_v \left( \frac{-\beta' x}{d_{{\mscr F_v}}}\right)
\end{align}
modulo~$T$. Now we observe that
\begin{align}\label{eq:4.4.880}
\left(\mfrak l^{-j_0}/\mfrak l^{j_1}\right)^G = {\mfrak l'}^{-j_0}/{\mfrak l'}^{j_1}.
\end{align}
Indeed, one can easily show \eqref{eq:4.4.880} by induction on~$j_0+j_1$. So \eqref{eq:4.4.790} becomes, letting~$W=\mfrak l'^{-j_0}/\mfrak l'^{j_1}$,
\begin{align}\label{4.4.890}
A(v) = &
\psi_v\left(\frac{-t_v}{2d_{\mscr F_v}}\right)\mathrm{Vol}(\mfrak l ^{j_1}, dx)\sum_{x\in W^G} \rec_v (x + 2^{-1} \bs \theta_v )^{-1} \psi_v \left( \frac{-\beta' x}{d_{{\mscr F_v}}}\right)
\notag
\\
&+
\psi_v\left(\frac{-t_v}{2d_{\mscr F_v}}\right)\mathrm{Vol}(\mfrak l^{j_1}, dx)\sum_{x\in W-W^G} \rec_v (x + 2^{-1} \bs \theta_v )^{-1} \psi_v \left( \frac{-\beta' x}{d_{{\mscr F_v}}}\right).
\end{align}
Applying Proposition~\ref{prop:4.4.24} to the difference of \eqref{eq:4.4.870} and \eqref{4.4.890}, we conclude the lemma.
\end{proof}
Combining the lemmas that we have proved, we conclude \eqref{eq:4.4.29}. The proof of Proposition~\ref{prop:4.4.30} is complete.
\end{proof}
We have completed the proof of Theorem~\ref{thm:main}.

\subsection{verifying \eqref{P'} and \eqref{C}}
In this subsection, we show that \eqref{P'} and \eqref{C} hold under mild assumptions. For a discussion of these assumptions, see Remark~\ref{rmk:4.5.24}.
\begin{proposition}\label{prop:4.5.1}
Suppose that~$\mu_{p^{r-1}}\subset \mscr M'$,~$\mscr M = \mscr M'(\mu_{p^r})$ for some~$r\ge 2$. Further assume that~$\mscr M' / \Q (\mu_{p^{r-1}})$ is unramified at the unique prime of~$\Q(\mu_{p^{r-1}})$ lying above~$p$. Then \eqref{P'} holds for~$\mscr M/ \mscr M'$. In particular, \eqref{P} holds when~$\mscr M'$ is given by~$\mscr M_0(\mu_{p^{r-1}})$ for a~$p$-ordinary CM field~$\mscr M_0$ which is unramified at~$p$.
\end{proposition}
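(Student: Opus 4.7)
The plan is to reduce $\mfrak d_{\mscr F/\mscr F'}=p\mscr O$ to a local computation at primes of $\mscr F$ above $p$, transport the question from the totally real tower $\mscr F/\mscr F'$ to the CM tower $\mscr M/\mscr M'$ via the linear disjointness $\mscr M=\mscr M'\otimes_{\mscr F'}\mscr F$, and finally invoke the classical computation of the different of $\Q_p(\mu_{p^r})/\Q_p(\mu_{p^{r-1}})$.

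First, $\mfrak d_{\mscr F/\mscr F'}$ is supported only at primes above $p$: the element $\alpha=\zeta_{p^r}+\zeta_{p^r}^{-1}$ generates $\mscr F$ over $\mscr F'$, its minimal polynomial over $\mscr F'$ divides the minimal polynomial of $\alpha$ over $\Q$, and the latter has discriminant a power of $p$, so the discriminant of $\mscr O_{\mscr F'}[\alpha]/\mscr O_{\mscr F'}$ is supported only above $p$. Second, since $\mscr M'\cap\mscr F=\mscr F'$ we have $\mscr M=\mscr M'\otimes_{\mscr F'}\mscr F$, and $p$-ordinarity ensures $p$ is unramified in both $\mscr M/\mscr F$ and $\mscr M'/\mscr F'$. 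Applying the tower formula in two ways,
\begin{align*}
\mfrak d_{\mscr M/\mscr F}\cdot \mfrak d_{\mscr F/\mscr F'}\mscr O_{\mscr M} = \mfrak d_{\mscr M/\mscr F'} = \mfrak d_{\mscr M/\mscr M'}\cdot \mfrak d_{\mscr M'/\mscr F'}\mscr O_{\mscr M},
\end{align*}
and localizing at primes above $p$ (where the outer factors disappear) gives $\mfrak d_{\mscr F/\mscr F'}\mscr O_{\mscr M}=\mfrak d_{\mscr M/\mscr M'}$ at such primes. Since $\mscr M/\mscr F$ is \'etale above $p$, the two principal ideals have equal valuations, and the claim reduces to $\mfrak d_{\mscr M_w/\mscr M'_{w'}}=(p)$ for each prime $w$ of $\mscr M$ above $p$.

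Third, by the hypothesis $\mscr M'_{w'}$ is unramified over $\Q_p(\mu_{p^{r-1}})$, so $\mscr M_w=\mscr M'_{w'}\cdot \Q_p(\mu_{p^r})$ is an unramified base change of $\Q_p(\mu_{p^r})/\Q_p(\mu_{p^{r-1}})$, and the different is preserved under unramified base change. It remains to compute the cyclotomic different: for any nontrivial $\sigma \in \Gal(\Q_p(\mu_{p^r})/\Q_p(\mu_{p^{r-1}}))$ with $\sigma(\zeta_{p^r}) = \zeta\,\zeta_{p^r}$ for some primitive $p$-th root of unity $\zeta$, the valuation (normalized so that the uniformizer of $\Q_p(\mu_{p^r})$ has value $1$) satisfies $v(\sigma(\zeta_{p^r})-\zeta_{p^r}) = v(\zeta-1) = p^{r-1}$, so the unique break in the lower numbering ramification filtration is at $p^{r-1}-1$, whence $v(\mfrak d_{\Q_p(\mu_{p^r})/\Q_p(\mu_{p^{r-1}})})=p^{r-1}(p-1)=\phi(p^r)=v(p)$. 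This establishes \eqref{P'}.

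For the "in particular" clause, if $\mscr M_0$ is a $p$-ordinary CM field unramified at $p$ and $\mscr M'=\mscr M_0(\mu_{p^{r-1}})$, then at a prime of $\mscr M'$ above $p$ the ramification over $\Q$ contributed by $\mscr M_0/\Q$ is trivial while that from $\Q(\mu_{p^{r-1}})/\Q$ is $\phi(p^{r-1})$, giving $e(\mscr M'/\Q)=\phi(p^{r-1})=e(\Q(\mu_{p^{r-1}})/\Q)$ and hence $e(\mscr M'/\Q(\mu_{p^{r-1}}))=1$. The hypothesis of the main statement is verified and \eqref{P} follows from \eqref{P'}. The main technical obstacle in the whole argument is pinning down the local cyclotomic different to be exactly $(p)$ and not just a power of $p$; this is handled by the explicit ramification-break calculation above.
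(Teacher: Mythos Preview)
Your proof is correct and follows essentially the same route as the paper: transport the different from $\mscr F/\mscr F'$ to $\mscr M/\mscr M'$ using that $\mscr M/\mscr F$ and $\mscr M'/\mscr F'$ are unramified above $p$, then to the cyclotomic tower $\Q(\mu_{p^r})/\Q(\mu_{p^{r-1}})$ using the hypothesis, and finally compute the cyclotomic different. The only difference is cosmetic: the paper obtains $\mfrak d_{\Q(\mu_{p^r})/\Q(\mu_{p^{r-1}})}=(p)$ from the global conductor--discriminant formula, while you obtain it from the local ramification break at $p^{r-1}-1$; both are standard.
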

\begin{proof}
Let~$\varphi$ be the Euler totient function. From conductor-discriminant formula, the discriminant of~$\Q(\mu_{p^r})$ is given by
\begin{align}
e_r :=\sum_{j=1}^r (\varphi(p^j)-\varphi(p^{j-1}))p^j,
\end{align}
because, for each~$j\ge 1$, there are~$\varphi(p^j)-\varphi(p^{j-1})$ distinct Dirichlet characters of conductor~$p^j$. Solving a recurrence relation, for every~$r\ge 2$, we obtain the formula~$\mfrak d_{\Q(\mu_{p^r})/\Q(\mu_{p^{r-1}})} = p\mscr O_{\Q(\mu_{p^r})}$ for the relative different.
\par
Consider the diagram 
\begin{align}
\xymatrix{
 				&\mscr M 		&
\\
\Q(\mu_{p^r})	\ar@{-}[ur]	&\mscr M'\ar@{-}[u]		&\mscr F \ar@{-}[ul]
\\
\Q(\mu_{p^{r-1}})\ar@{-}[ur]\ar@{-}[u]	&			&\mscr F'\ar@{-}[ul]\ar@{-}[u]
}
\end{align}
of field extensions. We first observe that~$\mscr M/ \Q(\mu_{p^r})$ is unramified by our assumption. On the other hand, the extensions~$\mscr M/\mscr F$ and~$\mscr M'/\mscr F'$ are always unramified. We give an argument for~$\mscr M/\mscr F$, since the same argument works for~$\mscr M'/\mscr F'$. Since~$\mscr M$ is~$p$-ordinary, the extension~$\mscr M/\mscr F$ is unramified at every place lying above~$p$ in particular. However,~$\mscr F(\mu_{p^r})$ is a non-trivial subextension of~$\mscr M/\mscr F$ since~$\mscr F$ is totally positive. Since~$\mscr M/\mscr F$ is quadratic,~$\mscr M$ is equal to~$\mscr F(\mu_{p^r})$. It follows from the properties of the cyclotomic fields that the extension~$\mscr M/\mscr F$ is unramified outside of~$p$.
\par
By the transifive property of relative different ideals, we have
\begin{align}
\mfrak d_{\mscr M/ \mscr M'} = \mfrak d_{\Q(\mu_{p^r})/\Q(\mu_{p^{r-1}})} \cdot \mscr R= p\mscr R
\end{align}
and
\begin{align}
\mfrak d_{\mscr M/ \mscr M'} = \mfrak d_{\mscr F/ \mscr F'} \cdot \mscr R.
\end{align}
From these to equalities, it follows that~$\mfrak d_{\mscr F / \mscr F'} = p\mscr O$, which is \eqref{P'}.
\end{proof}
\par
We introduce some notation that are necessary to analyze the condition \eqref{C}. Let us decompose
\begin{align}
Cl_- = A\oplus B,\hspace{10mm}Cl_-'= A' \oplus B'
\end{align}
where~$A$ and~$A'$ are the maximal~$p$-primary subgroups of~$Cl_-$ and~$Cl_-'$ respectively. Then the transfer map induces an isomorphism
\begin{align}\label{eq:4.5.5}
J_p\colon B' \tilde \rightarrow B^G.
\end{align}
However, the map~$J_p\colon A' \to A^G$ is not necessarily an isomorphism. The cokernel of~$A' \to A^G$ is described by the ramification of~$\mscr F/\mscr F'$.
\begin{proposition}\label{prop:4.5.6}
Suppose that~$\mscr M = \mscr M'(\mu_{p^{r+1}})$ for some positive integer~$r$. Then we have
\begin{align}
\mathrm{Coker}(A' \to A^G) \cong \prod_{v'}\Z/p\Z
\end{align}
where the product is taken over the set of places of~$\mscr F'$ lying over~$p$ that are ramified in~$\mscr F'$. Moreover, each factor of~$\Z/p\Z$ is generated by the ideal class
\begin{align}
\mfrak p \overline{\mfrak p}^{-1}
\end{align}
where~$\mfrak p$ is the prime ideal associated to the unique element of~$\Sigma_p$ whose restriction to~$\mscr F'$ is~$v'$.
\end{proposition}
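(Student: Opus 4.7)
The plan is to combine Chevalley's ambiguous class number formula for the cyclic degree-$p$ extension $\mscr M/\mscr M'$ with the observation that passage to the minus part eliminates the unit contribution which normally obstructs such a computation.

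First I would establish the ramification picture. With $\mu_{p^r}\subset \mscr M'$ implicit in the setup, $\mscr M = \mscr M'(\mu_{p^{r+1}})$ is a cyclic extension of degree~$p$ with Galois group $G$, unramified outside~$p$. Because $\mscr M$ is $p$-ordinary, each prime of~$\mscr F$ above~$p$ splits in $\mscr M/\mscr F$, so the ramification index of a place~$w$ of~$\mscr M$ in $\mscr M/\mscr M'$ equals that of its restriction to $\mscr F$ in $\mscr F/\mscr F'$. Thus the ramified primes of $\mscr M/\mscr M'$ are in bijection with the ramified primes~$v'$ of $\mscr F/\mscr F'$ above~$p$; each such~$v'$ has a unique ramified place~$v$ of~$\mscr F$ above it (totally ramified of degree~$p$ by \eqref{P'}), and $v$ splits into two conjugate places $\mfrak p, \overline{\mfrak p}$ in~$\mscr M$.

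Next I would apply Chevalley's ambiguous class number formula to both $\mscr M/\mscr M'$ and $\mscr F/\mscr F'$, and compare. On $p$-parts it reads
\begin{align*}
\frac{|Cl(\mscr M)_p^G|}{|\mathrm{im}(Cl(\mscr M')_p)|} \;=\; \frac{p^m}{p \cdot [E_{\mscr M'} : E_{\mscr M'} \cap N_{\mscr M/\mscr M'}\mscr M^\times]_p},
\end{align*}
where $m$ counts the ramified~$v'$, and analogously for $\mscr F/\mscr F'$. The crucial point is that when one forms the minus parts $A$ and $A'$, the unit indices from the two formulas cancel: the cokernel of $E_{\mscr F'}\hookrightarrow E_{\mscr M'}$ is, up to torsion of order prime to~$p$, the group of $p$-power roots of unity in~$\mscr M'$, and these are local (hence global) norms from~$\mscr M$ at the ramified primes because the local extensions are totally ramified cyclotomic, via the norm-compatibility $\varpi_{r+1}\mapsto \varpi_r$ of~\eqref{eq:3.7.13}. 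A snake-lemma argument applied to the commutative diagram whose rows are the defining exact sequences $Cl(\mscr F)_p \to Cl(\mscr M)_p \to A \to 0$ and $Cl(\mscr F')_p \to Cl(\mscr M')_p \to A' \to 0$, with vertical transfer maps, then yields $|\mathrm{Coker}(A' \to A^G)| = p^m$. To identify the generators I would use the standard description of Chevalley's cokernel: $Cl(\mscr M)_p^G$ modulo the image of $Cl(\mscr M')_p$ is generated by classes of $G$-ambiguous ideals supported at the ramified primes, i.e.\ by the classes of $\mfrak p$ and $\overline{\mfrak p}$ for each ramified~$v'$. Since $\mfrak p\overline{\mfrak p}$ differs from the extension of $\mfrak v$ to~$\mscr M$ by a $p$-th power coming from ramification, its class lies in the image of $Cl(\mscr F)$ and therefore dies in~$A$; the surviving generator of the $\Z/p\Z$-factor contributed by~$v'$ is thus exactly $[\mfrak p\overline{\mfrak p}^{-1}]$, as claimed.

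The main obstacle will be making the unit cancellation precise. Although the minus-part construction is designed to kill the unit index, the fact that $p$-power roots of unity do lie in~$\mscr M'$ means one must actually verify that they are norms, which reduces to an explicit computation in cyclotomic $\Q_p$-towers. A cleaner and more unified alternative is to run the Tate-cohomology Euler-characteristic computation for $G$ acting on the relevant idele class groups, which simultaneously produces the order and the generators of $\widehat H^0(G,A)$ and bypasses the case analysis needed in the direct Chevalley approach.
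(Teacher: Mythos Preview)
The paper does not actually prove this proposition: its entire proof is the one-line citation ``See the Corollary~1.3.6 of \cite{Greenberg book}.'' Your outline via Chevalley's ambiguous class number formula, comparison of the $\mscr M/\mscr M'$ and $\mscr F/\mscr F'$ computations, and identification of the ambiguous-ideal generators is the standard direct route to such a statement and is almost certainly close to what Greenberg does. So in substance you are supplying a proof where the paper defers to a reference.

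Two points of care are worth flagging. First, in this paper $Cl_-$ is defined as the \emph{cokernel} of $Cl(\mscr F)\to Cl(\mscr M)$, not as the $(-1)$-eigenspace of complex conjugation; on $p$-parts with $p$ odd these agree only up to the cokernel of $Cl(\mscr F)_p\to Cl(\mscr M)_p^{+}$, so your snake-lemma diagram needs either this identification or an extra term tracked through. Second, the snake lemma in the form you invoke requires taking $G$-invariants on the row for~$\mscr M$, and the sequence $Cl(\mscr F)_p^G\to Cl(\mscr M)_p^G\to A^G$ is only left exact; the connecting map into $H^1(G,Cl(\mscr F)_p)$ must be shown to vanish (or controlled) before you can read off $\mathrm{Coker}(A'\to A^G)$ from the two Chevalley computations. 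Your remark that the Tate-cohomology Euler-characteristic argument is cleaner is well taken: it packages both of these issues at once and is how results of this type are typically handled in Greenberg's notes.
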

\begin{proof}
See the Corollary~1.3.6 of \cite{Greenberg book}.
\end{proof}
\par
\begin{proposition}\label{prop:4.5.9}
Suppose the condition of Proposition~\ref{prop:4.5.6} holds. Further assume that~$\mscr F/\mscr F'$ is ramified at every place of~$\mscr F'$ lying above~$p$. Then the condition \eqref{C} holds.
\end{proposition}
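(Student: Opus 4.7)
The plan is to use the cokernel description from Proposition~\ref{prop:4.5.6} to expand $\sum_{a\in\mcal D^G}\rec_\mscr M(a)$ on the left-hand side of \eqref{C} into a product of the analogous sum over $\mcal D'$ with an extra geometric-type factor, and then to absorb that factor into $\mcal C$ via the cyclotomic identity $(1-\zeta)(1+\zeta+\cdots+\zeta^{p-1})=1-\zeta^p$. This will reduce \eqref{C} to a local identity at each $w\in\Sigma_p$, which I verify using local class field theory together with the Kummer structure of $\mscr M_w/\mscr M'_{w'}$ afforded by $\mu_p\subset\mscr M'$ (a consequence of $[\mscr M:\mscr M']=p$ together with $\mscr M=\mscr M'(\mu_{p^{r+1}})$, which force $\mu_{p^r}\subset\mscr M'$).

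First I would carry out the reduction. The hypothesis that $\mscr F/\mscr F'$ is ramified at every place of $\mscr F'$ above $p$, combined with Proposition~\ref{prop:4.5.6} and the isomorphism $B'\tilde\to B^G$ of \eqref{eq:4.5.5}, lets one choose $\mcal D^G$ so that its classes are represented by $\iota(a')\cdot\prod_{w}(\mfrak p_w\overline{\mfrak p_w}^{-1})^{j_w}$ with $a'\in\mcal D'$ and $0\le j_w\le p-1$. Writing $\zeta_w=\rec_w(\varpi_w)\rec_{\overline w}(\varpi_{\overline w})^{-1}$ for the element appearing in the $w$-factor of $\mcal C$, this yields
\begin{align*}
\mcal C\cdot\sum_{a\in\mcal D^G}\rec_\mscr M(a)=\prod_{w\in\Sigma_p}(1-\zeta_w^p)\cdot\sum_{a'\in\mcal D'}\ver(\rec_{\mscr M'}(a')),
\end{align*}
after applying the cyclotomic identity. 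Since the right-hand side of \eqref{C} equals $\ver(\mcal C')\cdot\sum_{a'\in\mcal D'}\ver(\rec_{\mscr M'}(a'))$, \eqref{C} is equivalent to the termwise local identities $\ver(\zeta'_{w'})=\zeta_w^p$ for each $w\in\Sigma_p$.

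To establish each such local identity, I would apply the standard compatibility $\ver\circ\rec_{\mscr M'}=\rec_\mscr M\circ\iota$ of global class field theory to the ideles $\varpi_{w'}$ and $\varpi_{\overline{w'}}$, giving
\begin{align*}
\ver(\zeta'_{w'})=\rec_w(\varpi_{w'})\rec_{\overline w}(\varpi_{\overline{w'}})^{-1}
\end{align*}
with $\varpi_{w'}$ now regarded as an element of $\mscr M_w^\times$ via the inclusion induced by $\mscr M'_{w'}\hookrightarrow\mscr M_w$. Because $\mscr M_w/\mscr M'_{w'}$ is totally ramified of degree $p$ and $\mu_p\subset\mscr M'_{w'}$, the extension is a Kummer extension; I can therefore choose a uniformizer $\varpi_w$ of $\mscr M_w$ so that $\varpi_w^p$ lies in $\mscr M'_{w'}$ and is itself a uniformizer there. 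Taking $\varpi_{w'}:=\varpi_w^p$ (and similarly at the conjugate place) immediately yields $\rec_w(\varpi_{w'})=\rec_w(\varpi_w)^p$ and hence $\ver(\zeta'_{w'})=\zeta_w^p$.

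The main obstacle is reconciling this Kummer-theoretic choice of uniformizers with the norm-compatible cyclotomic choice $\varpi_r=1-\zeta_{p^r}$ made in \eqref{eq:3.7.13}: both $\mcal C$ and $\mcal C'$ depend on the uniformizer, so one must show that the unit discrepancy $u_w:=\varpi_{w'}/\varpi_w^p$ arising from the two conventions satisfies $\rec_w(u_w)\rec_{\overline w}(u_w^c)^{-1}=1$ in $Z$, i.e.\ that $\rec_w(u_w)$ is fixed by the action of complex conjugation on $Z$. This should follow from the observation that $u_w u_w^c$ is a norm from $\mscr M'_{w'}$ together with the behaviour of $c$ on the reciprocity map, but is the most delicate point and will require the explicit cyclotomic computations from Subsection~\ref{ss:0308}; alternatively, one could sidestep it by verifying directly that the definition of $\mcal L$ is invariant under the relevant change of uniformizer.
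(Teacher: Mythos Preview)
Your approach is essentially the same as the paper's: decompose $Cl_-$ using Proposition~\ref{prop:4.5.6} and the isomorphism~\eqref{eq:4.5.5}, expand $\sum_{a\in\mcal D^G}\rec_\mscr M(a)$ via the cokernel description as a product of $\sum_{a'\in\mcal D'}\ver(\rec_{\mscr M'}(a'))$ with the geometric sums $\sum_{j_w=0}^{p-1}\zeta_w^{j_w}$, and then collapse these against $\mcal C$ using $(1-\zeta_w)(1+\zeta_w+\cdots+\zeta_w^{p-1})=1-\zeta_w^p$. The paper does exactly this, with the purely cosmetic extra step of splitting $Cl_-$ into its $p$-primary part $A$ and prime-to-$p$ part $B$ and handling them separately before recombining.

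Your caution in the final paragraph is more than the paper exercises. The paper simply asserts~\eqref{eq:4.5.16}, which is precisely the identity $\prod_{w\in\Sigma_p}(1-\zeta_w^p)=\ver(\mcal C')$ that you single out, without further comment; your Kummer-theoretic choice $\varpi_{w'}:=\varpi_w^p$ is exactly the device that makes this transparent, and that is all Proposition~\ref{prop:4.5.9} requires. The subsequent worry about reconciling this with the specific cyclotomic uniformizers $\varpi_r=1-\zeta_{p^r}$ of~\eqref{eq:3.7.13} is not part of the proposition (which leaves the uniformizer choice free) and the paper does not address it either---both arguments implicitly take $\varpi_w$ and $\varpi_{w'}$ compatibly. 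You can safely drop that final paragraph; what remains matches the paper's proof.
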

\begin{proof}
As before, we fix representatives~$\mcal D$ and~$\mcal D'$ for~$Cl_-$ and~$Cl_-'$ respectively. In particular, we choose~$\mcal D$ so that the diagonal embedding maps~$\mcal D'$ into~$\mcal D$, and we choose~$\mcal D$ to be a~$G$-set with respect to the natural action of~$G$ on ideles. We further fix the decompositions
\begin{align}
\mcal D &= \mcal A \times \mcal B
\\
\mcal D' &= \mcal A' \times \mcal B'
\end{align}
where~$\mcal A$,~$\mcal B$,~$\mcal A'$, and~$\mcal B'$ are representatives for~$A,B,A'$, and~$B'$ respectively. We may also assume that~$\mcal A$ and~$\mcal B$ are~$G$-sets so that the decomposition~$\mcal D = \mcal A \times \mcal B$ becomes a decomposition of a~$G$-set, and that the natural diagonal embedding maps~$\mcal A'$ and~$\mcal B'$ into~$\mcal A$ and~$\mcal B$ respectively.
\par
The map \eqref{eq:4.5.5} being an isomorphism is now equivalent to the map
\begin{align}
\mcal B' \to \mcal B^G
\end{align}
being a bijection. On the other hand, the conclusion of Proposition~\ref{prop:4.5.6} in terms of the~$\mcal A$ and~$\mcal A'$ can be reformulated as the assertion that we can choose~$\mcal A$ so that we have
\begin{align}\label{eq:4.5.13}
\mcal A^G = \coprod_{j}\left( \mcal A' \cdot \prod_w\left( \frac{\varpi_w}{\varpi_{\overline w}}\right) ^{j_w}\right)
\end{align}
where~$j$ runs over the set
\begin{align}
\{(j_w)_{w\in\Sigma_p}| j_w = 0,1,2,\cdots,p-1\}
\end{align}
of~$\Sigma_p$-tuples of integers. The bijection \eqref{eq:4.5.13} given above implies that
\begin{align}
\sum_{a\in \mcal A^G} \rec_\mscr M (a) = \left(\prod_{w\in\Sigma_p}\left(\sum_{j_w=0}^{p-1} \frac{\rec_w(\varpi_w)}{\rec_{\overline w}(\varpi_{\overline w})}\right)\right)
\sum_{a' \in \mcal A'}\ver\left(\rec_{\mscr M'}(a')\right)
\end{align}
holds. Multiplying~$\mcal C$ to both sides, we obtain
\begin{align}\label{eq:4.5.16}
\mcal C \cdot \sum_{a\in \mcal A^G} \rec_\mscr M (a) = \ver \left( \mcal C' \cdot \sum_{a'\in\mcal A'} \rec_{\mscr M'}(a') \right).
\end{align}
It is not very difficult to obtain \eqref{C} from this. Indeed, 
\begin{align}
\mcal C \cdot \sum_{a \in \mcal D^G} \rec_\mscr M (a)
&=
\left( \mcal C \cdot \sum_{a \in \mcal A^G}\rec_\mscr M(a)\right) \times \left(\sum_{b\in \mcal B^G} \rec_\mscr M(b)\right)
\\
&=
\ver \left( \mcal C' \cdot \sum_{a'\in\mcal A'} \rec_{\mscr M'}(a') \right)\times\ver\left(\sum_{b'\in \mcal B'} \rec_{\mscr M'}(b')\right)
\\
&=
\ver \left( \mcal C' \cdot \sum_{(a',b')\in\mcal A'\times \mcal B'} \rec_{\mscr M'}(a'b')\right)
\\
&=
\ver \left( \mcal C' \cdot \sum_{a'\in\mcal D'} \rec_{\mscr M'}(a')\right)
\end{align}
after routine calculations. The proof of the proposition is complete.
\end{proof}
\begin{corollary}
Let~$\mscr K$ be an imaginary quadratic field and let~$p$ be an odd prime that splits in~$\mscr K$. For an integer~$r\ge 2$, put~$\mscr M = \mscr K (\mu_{p^{r}})$ and~$\mscr M' = \mscr K(\mu_{p^{r-1}})$. Then \eqref{C} and \eqref{P'} holds.
\end{corollary}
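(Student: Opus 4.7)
The plan is to verify the two hypotheses \eqref{P'} and \eqref{C} by directly invoking Proposition~\ref{prop:4.5.1} and Proposition~\ref{prop:4.5.9} respectively. For \eqref{P'}, I would set $\mscr M_0 = \mscr K$ and apply Proposition~\ref{prop:4.5.1}: the crucial point to check is that $\mscr M'/\Q(\mu_{p^{r-1}})$ is unramified at the prime above $p$. Since $p$ splits in the imaginary quadratic field $\mscr K$, the extension $\mscr K/\Q$ is unramified at $p$; base changing, the compositum $\mscr M' = \mscr K\cdot\Q(\mu_{p^{r-1}})$ is unramified over $\Q(\mu_{p^{r-1}})$ at the unique prime above $p$. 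Note also that for $p$ odd, the only quadratic subfield of $\Q(\mu_{p^r})$ is $\Q(\sqrt{p^*})$, in which $p$ ramifies; so our splitting hypothesis forces $\mscr K$ and $\Q(\mu_{p^r})$ to be linearly disjoint over $\Q$, validating all the degree computations.

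For \eqref{C}, Proposition~\ref{prop:4.5.9} reduces the task to showing that $\mscr F/\mscr F'$ is ramified at every place of $\mscr F'$ lying above $p$, where $\mscr F$ and $\mscr F'$ are the maximal totally real subfields of $\mscr M$ and $\mscr M'$. I would analyze the decomposition of $p$ as follows. Since $p$ splits in $\mscr K$ and is totally ramified in $\Q(\mu_{p^r})/\Q$ with index $\varphi(p^r)$, the rational prime $p$ has exactly two primes in $\mscr M = \mscr K(\mu_{p^r})$, each totally ramified of index $\varphi(p^r)$. Complex conjugation interchanges the two primes of $\mscr K$ above $p$, hence interchanges the two primes of $\mscr M$ above $p$, so they descend to a \emph{single} prime in the fixed field $\mscr F$. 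The same analysis applies to $\mscr F'$.

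Next, I would pin down the ramification indices by writing $\mscr F = \Q(\mu_{p^r})^+(\alpha)$ with $\alpha := \sqrt{-d}(\zeta_{p^r} - \zeta_{p^r}^{-1})$, where $\mscr K = \Q(\sqrt{-d})$; this element is manifestly fixed by complex conjugation and satisfies $\alpha^2 = -d(\zeta_{p^r} - \zeta_{p^r}^{-1})^2$. Using that $\zeta_{p^r} - 1$ is a uniformizer of $\Q(\mu_{p^r})$ above $p$ and that $\Q(\mu_{p^r})/\Q(\mu_{p^r})^+$ is totally ramified of degree $2$ above $p$, one checks that $(\zeta_{p^r} - \zeta_{p^r}^{-1})^2$ has valuation $1$ in $\Q(\mu_{p^r})^+$ at the prime above $p$, while $-d$ is a unit (as $p \nmid d$, since $p$ splits in $\mscr K$). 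Thus $\alpha^2$ has odd valuation, so $\mscr F/\Q(\mu_{p^r})^+$ is ramified at $p$, yielding ramification index $\varphi(p^r)$ for the unique prime of $\mscr F$ above $p$, and analogously $\varphi(p^{r-1})$ for $\mscr F'$. Dividing, $\mscr F/\mscr F'$ has ramification index $p$ at the unique prime of $\mscr F'$ above $p$, verifying the hypothesis of Proposition~\ref{prop:4.5.9} and thereby yielding \eqref{C}.

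The only mildly delicate point is the uniformizer calculation in $\Q(\mu_{p^r})^+$; the possible obstacle to a completely routine proof would be the presence of $2$ in denominators or additional $p$-factors in $\alpha^2$, but these are harmless since $p$ is odd and coprime to $d$. Everything else is a straightforward unwinding of the definitions in the cited propositions.
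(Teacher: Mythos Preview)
Your proposal is correct and follows the same route as the paper, which simply asserts that the hypotheses of Propositions~\ref{prop:4.5.1}, \ref{prop:4.5.6}, and \ref{prop:4.5.9} are satisfied; you supply the explicit verification the paper omits. One minor remark: the $\alpha$-construction in your third paragraph is redundant. Once you know complex conjugation swaps the two primes of $\mscr M$ above $p$, the unique prime $\mfrak q$ of $\mscr F$ below them splits in $\mscr M$, so $e(\mfrak q/p)=e(\mfrak P/p)=\varphi(p^r)$ immediately by multiplicativity of ramification indices, and likewise $\varphi(p^{r-1})$ for $\mscr F'$; dividing gives ramification index $p$ for $\mscr F/\mscr F'$ without any uniformizer computation.
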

\begin{proof}
It is clear that the extension~$\mscr M/ \mscr M'$ satisfies the assumptions of Proposition~\ref{prop:4.5.1}, Proposition~\ref{prop:4.5.6}, and Proposition~\ref{prop:4.5.9}.
\end{proof}
\begin{remark}\label{rmk:4.5.24}
We explain why the condition~\eqref{P'} and \eqref{C} are considered as mild. Suppose we have a tower of CM fields consisting of~$\mscr M_r$ for each positive integer~$r$, such that~$\mscr M_r \subset \mscr M_{r+1}$ is a degree~$p$ extension for all~$r$. Let~$\mscr M_\infty~$ be the union of the all~$\mscr M_r$, and suppose that~$\mscr M_\infty$ is Galois over~$\mscr M_1$ with the Galois group~$\Gal(\mscr M_\infty / \mscr M )$ being isomorphic to~$\Z_p$. Let~$\mscr F_r$ be the maximal totally real subfield of~$\mscr M_r$. If we let~$\mscr F_\infty$ be the union of all~$\mscr F_r$, then~$\mscr F_\infty$ is exactly the maximal totally real subfield of~$\mscr M_\infty$, and it is Galois over~$\mscr F_1$ with the Galois group~$\Gal(\mscr F_\infty / \mscr F )$ being isomorphic to~$\Z_p$. Under the validity of Leopoldt's conjecture for~$\mscr F_1$, which we do not know in full generality yet,~$\mscr F_\infty$ must be the cyclotomic~$\Z_p$-extension of~$\mscr F_1$. Then it is easy to see that the condition~\eqref{P} and \eqref{C} are satisfied for the extension~$\mscr M_{r+1}/\mscr M_r$ when~$r$ is sufficiently large. Note that the Leopoldt's conjecture is proved by Brumer for abelian totally real number fields.
\end{remark}
\section{Applications to the false Tate curve extensions}\label{s5}
The current section has two aims. The first one is to explain the relation of our transfer congruence to the existence of non-commutative~$p$-adic~$L$-functions, and the second is to study the explicit congruence between~$L$-values whic is implied by our work.
\par
Roughly speaking, in order to prove the non-commutative Iwasawa main conjecture for a given motive and a~$p$-adic Lie extension, it is sufficient to prove various commutative Iwasawa main conjecture for intermediate extensions and prove certain non-commutative Kummer congruences between these commutative~$p$-adic~$L$-functions. This strategy was successfully applied for the Tate motives over totally real fields in the works of Kakde and Ritter-Weiss. We want to explain what are the relevant intermediate extensions for the false Tate curve extensions, and how to deduce the necessary non-commutative Kummer congruence between them from our transfer congruence. Unfortunately, we do not know at present all of the commutative main conjectures that are necessary for the non-commutative one, in contrast to the situation of the Tate motives over totally real fields for which the commutative main conjecture is known in a wide generality.
\par
While our speculations on the non-commutative Iwasawa main conjecture will rely on other unknown conjectures, our work unconditionally imply the congruence between special values of~$L$-functions. We will study the explicit form of such congruences over the false Tate curve extension in Subsection~\ref{subsection:5.3}.
\subsection{Algebraic preliminaries}
In this subsection, we start with a compact~$p$-adic Lie group~$G$. We always assume that~$G$ has a closed normal subgroup~$H$ such that~$\Gamma =  G/ H$ is isomorphic to~$\Z_p$, and explain in general and vague terms what are the non-commutative congruences for it. In the case when $H$ is abelian, which is our main concern of this paper, will state precisely the description of $K_1$-groups of Iwsawa algebra $\Lambda(G)$ in terms of the transfer congruence. For more details and generalizations, see \cite{Muenster 11}. Following \cite{CFKSV}, we define
\begin{align}
S = \{ \lambda \in \Lambda ( G) | \Lambda( G) / \Lambda(G)\lambda \text{ is finitely generated as a left~$\Lambda( H)$-module.}\}.
\end{align}
As shown in Theorem~2.4 of \cite{CFKSV},~$S\subset \Lambda(G)$ is an Ore set, and we denote the localization of~$\Lambda(G)$ at~$S$ by~$\Lambda(G)_S$. The completion of~$\Lambda(G)_S$ with respect to the~$p$-adic topology is denoted by~$\widehat{\Lambda(G)_S}$. Strictly speaking,~$S$ depends on~$G$ and~$H$, but we omit~$G$ and~$H$ from the notation. In practice,~$G$ is given as a Galois group, and~$\Gamma$ corresponds to the cyclotomic~$\Z_p$-extension.
\par
For any topological ring~$R$, we define~$K_1(R)$ by
\begin{align}
K_1(R) = \GL(R)/[\GL(R),\GL(R)]^-
\end{align}
where~$\GL(R)$ is the inductive limit~$\lim_{\rightarrow n}\GL_n(R)$ with respect to the inclusions~$\begin{bmatrix}
a
\end{bmatrix}
\mapsto
\begin{bmatrix}
a&0\\
0&1
\end{bmatrix}$, and~$[\GL(R),\GL(R)]^-$ is the closure of the commutator subgroup~$[\GL(R),\GL(R)]$ of~$\GL(R)$. 
\par
When~$G$ is abelian,~$\Lambda(G)$ is a semi-local ring and~$K_1(\Lambda(G))=\Lambda(G)^\times$ and~$K_1(\Lambda(G)_S)=\Lambda(G)_S^\times$. One successful approach to understanding~$K_1(\Lambda(G))$ for a non-commutative~$G$ is to describe it in terms of abelian subquotients of~$G$. To be precise, one defines a family of abelian subquotients~$\{G_i| i\in I\}$ for some index set~$I$ and a family of maps~$\theta^i\colon K_1(\Lambda(G))\to K_1(\Lambda(G_i))$, in the hope that studying the map
\begin{align}
\theta =\prod_{i\in I} \theta^i \colon K_1(\Lambda(G))\to \prod_{i \in I} K_1(\Lambda(G))=\prod_{i \in I} \Lambda(G)^\times
\end{align}
shed some light on the structure of~$K_1(\Lambda(G))$.
One tries to show~$\theta$ is injective, or at least describe its kernel, and also describe the image. The relations that an~$I$-tuple~$(\mcal L_i )_i$ needs to satisfy in order to be in the image of~$\theta$ is called non-commutative congruences. The transfer congruence proved in Theorem~\ref{thm:main} is one example of such non-commutative congruences. When~$H$ is pro-$p$ and abelian, then this is in fact sufficient. Indeed, for every integer~$r \ge 0$, consider the unique closed subgroup~$\Gamma_r$ of~$\Gamma$ with~$[\Gamma_r:\Gamma]=p^r$. Let~$G_r$ be the inverse image of~$\Gamma_r$ in~$G$ and let~$G_r^{\mathrm{ab}}$ be the abelianization of~$G_r$. Since~$G_r$ is of finite index in~$G$, we have a norm map~$\mathrm{Nr}_r=K_1(\Lambda( G)) \to K_1(\Lambda ( G_r)$. Similarly, we have the norm map~$\mathrm{Nr}_{r',r}=K_1(\Lambda( G_j)) \to K_1(\Lambda ( G_r)$ whenever~$r'\le r$. Also we have a map~$K_1( \Lambda(G_{r'})) \to K_1(\Lambda(  G_r^{\mathrm{ab}})$ induced by the natural projection~$G_r \to G_r^{\mathrm{ab}}$. Define~$\theta^r$
\begin{align}
\theta^r \colon K_1(\Lambda( G)) \to K_1(\Lambda ( G_r)) \to K_1(\Lambda(  G_r^{\mathrm{ab}}))
\end{align}
to be the composition of the two maps. Similarly, we define the maps
\begin{align}
\theta^r_S \colon K_1(\Lambda( G)_S) \to K_1(\Lambda ( G_r)_S) \to K_1(\Lambda(  G_r^{\mathrm{ab}})_S)
\\
\widehat\theta^r_S \colon K_1(\widehat{\Lambda( G)_S}) \to K_1(\widehat{\Lambda ( G_r)_S}) \to K_1(\widehat{\Lambda(  G_r^{\mathrm{ab}})_S}).
\label{eq:5.1.6}
\end{align}
For each positive integer~$r$, let~$\ver_r \colon \Lambda(G_{r-1}^\mathrm{ab}) \to \Lambda(G_r^\mathrm{ab})$ be the map induced by the transfer map~$G_{r-1}^\mathrm{ab}\to G_r^\mathrm{ab}$ on the group elements. We have a trace map~$\Lambda(G_{r}^\mathrm{ab}) \to \Lambda(G_{r}^\mathrm{ab})$ with respect to the action of~$G_r/G_{r-1}$, and let~$T_{r}$ be the image. Then~$T_{r}$ is an ideal in~$\Lambda(G_r^\mathrm{ab})$ and we write~$\widehat T_{r,S} = T_{r} \widehat{\Lambda(G_{r}^\mathrm{ab} )_S}$. 
\begin{theorem}\label{thm:5.1.7}
Assume that~$H$ is pro-$p$ and abelian. The maps
\begin{align}
\theta \colon K_1(\Lambda(G)) \to \prod_{r\ge 0} \Lambda(G_r^{\mathrm{ab}})^\times
\\
\widehat\theta_S \colon K_1(\widehat{\Lambda(G)_S}) \to \prod_{r\ge 0} \widehat{\Lambda(G_r^{\mathrm{ab}})_S}^\times
\end{align}
are injective. The image of~$\theta$ consists of~$(x_r)$ satisfying
\begin{enumerate}
\labitem {MS1}{MS1}~$\mathrm{Nr}_{j,i}(x_j)=x_i$ for all~$0 \le i \le j~$.
\labitem {MS2}{MS2}~$x_i$ is fixed under the action of~$\Gamma$ for all~$i$.
\labitem {MS3}{MS3}~$x_i \equiv \ver_i(x_{i-1})$ modulo~$T_i$ for all~$i\ge 1$.
\end{enumerate}
The image of~$\widehat\theta_S$ is described by the first two conditions and the congruence modulo~$\widehat T_{i,S}$.
\end{theorem}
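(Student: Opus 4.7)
The plan is to follow Kakde's strategy for describing $K_1$ of non-commutative Iwasawa algebras via abelian subquotients, specialized to the metabelian setting where $H$ is pro-$p$ abelian. First I would reduce to finite quotients: since $\Lambda(G)$ and each $\Lambda(G_r^{\mathrm{ab}})$ is the inverse limit over open normal subgroups of $G$ contained in $H$, and since $K_1$ behaves well under such inverse limits, it suffices to establish the statement at each finite level $G/U$ and then pass to the limit. This also reduces the $S$-localized statement to a compatible sequence of finite-level statements.

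Next I would verify the necessity of the three conditions. Condition \eqref{MS1} is the functoriality of norms in $K$-theory applied to the inclusions $G_r\hookrightarrow G_{r'}$. Condition \eqref{MS2} follows from the fact that $\theta^r$ factors through the norm to $K_1(\Lambda(G_r))$, which is invariant under the conjugation action of $G/G_r=\Gamma/\Gamma_r$; passing to the inverse limit in $r$ gives $\Gamma$-invariance. Condition \eqref{MS3} is the more delicate transfer congruence, analogous to Theorem~\ref{thm:main} but now at the level of $K_1$; I would establish it by a direct manipulation of matrices, using that $G_{r-1}/G_r$ is cyclic of order~$p$ and that the verlagerung and norm are related by a classical Frobenius-type formula modulo $p$-commutators.

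The more substantial step is the description of the image together with injectivity, for which the key tool is Oliver's $p$-adic integral logarithm $L\colon K_1(\Lambda(G))\to\Lambda(G^{\mathrm{ab}})$. This map is injective modulo a torsion subgroup that is easy to control in the pro-$p$ metabelian setting. Under $L$, the multiplicative conditions \eqref{MS1}--\eqref{MS3} translate into additive statements about traces, $\Gamma$-invariance, and a Frobenius-twisted congruence modulo $T_r$; the image of the corresponding additive map from $\Lambda(G^{\mathrm{ab}})$ into $\prod_r\Lambda(G_r^{\mathrm{ab}})$ can then be characterized by descending induction on $r$. Once the additive side is handled, inverting the logarithm and recording the torsion would give both injectivity and the image description for $\theta$. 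For $\widehat\theta_S$, I would extend Oliver's logarithm across the localization and completion; convergence of the logarithm on $\widehat{\Lambda(G)_S}$ is what distinguishes the $\widehat T_{r,S}$ congruence from the unlocalized $T_r$ congruence.

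The main obstacle will be the precise handling of Oliver's integral logarithm: its target involves a Frobenius twist whose compatibility with the norm and verlagerung at different levels must be tracked carefully, and the torsion subgroup of $K_1$ must be controlled independently. Since these constructions and their compatibilities have been worked out in the more general context of \cite{Muenster 11}, I would specialize those arguments to the present setting, where the pro-$p$ abelian assumption on $H$ makes the Jacobson radical of $\Lambda(G)$ easy to describe and reduces the torsion analysis to a computation in the residue field.
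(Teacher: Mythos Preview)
The paper does not actually give a proof of this theorem. It is stated as a known result, with the reader referred to \cite{Muenster 11} for ``more details and generalizations'' in the paragraph preceding the theorem; the two remarks following the statement likewise treat it as an input rather than something to be established here. So there is no paper-proof against which to compare your attempt.

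That said, your outline is broadly the correct one and matches what is done in the literature you would be specializing. The reduction to finite quotients, the necessity of \eqref{MS1}--\eqref{MS3}, and the use of the integral logarithm to linearize the problem are exactly the ingredients in Kakde's argument. One point to be careful about: you write that the integral logarithm is ``injective modulo a torsion subgroup that is easy to control,'' but in practice the kernel and cokernel of $L$ are identified via an explicit exact sequence involving $SK_1$ and the image of $\mu_{p-1}$, and showing $SK_1$ vanishes (or is controlled) for the relevant Iwasawa algebras is a genuine step, not a triviality. Your last paragraph acknowledges this, and indeed the pro-$p$ abelian hypothesis on $H$ is exactly what makes the computation tractable, but you should not expect it to reduce to a single residue-field calculation. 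For the completed localized version, the extension of the logarithm is handled in the cited volume and in Kakde's paper \cite{Kakde 11}; the subtlety there is that $\widehat{\Lambda(G)_S}$ is no longer compact, so the inverse-limit argument needs adjustment.
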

\begin{remark}
The conditions \eqref{MS1} and \eqref{MS2} are minimal compatibility conditions. For a general~$G$, the transfer congruence \eqref{MS3} is not sufficient, and we need to consider different kinds of congruences as described in \cite{Kakde 11, Ritter-Weiss}.
\end{remark}
\begin{remark}
It is sometimes necessary to use an extension of~$\Z_p$ as the coefficient for the Iwasawa algebra. When the coefficient ring is unramified, it is proved in \cite{Kakde 11} that the same result holds after twisting~$\ver_i$ by the Frobenius on the coefficient ring.
\end{remark}
\subsection{Specialization to the false Tate curve extensions}
In this subsection, we will specialize to the false Tate curve extension of imaginary quadratic fields, and discuss the non-commutative $p$-adic $L$-function along it. More precisely, we fix an imaginary quadratic field
\begin{align}
\mscr M_0 = \Q(\sqrt {-D_0})
\end{align}
for a positive square-free integer~$D_0$, and an odd prime~$p$ which splits in~$\mscr M_0$. Fix an integer~$m$ which is a~$p$-th power free and~$|m|_\R>1$. Define
\begin{align}
\mscr M_r = \mscr M_0(\mu_{p^r})
\end{align}
for each non-negative integer~$r$, and define
\begin{align}
\mscr K_\infty = \bigcup_{r=0}^\infty \mscr M_r(m^{\frac{1}{p^r}})
\end{align}
to be the two dimensional~$p$-adic Lie extension of~$\mscr M_0$. It easily follows from the Kummer theory that the Galois group~$\Gal (\mscr K_\infty / \mscr M_0)$ is isomorphic to the semidirect product~$\Z_p \rtimes \Z_p^\times$, where~$\Z_p^\times$ acts on~$\Z_p$ by the multiplication.
\par
We need a preliminary reduction step before we apply the result of the previous subsection. Consider the open subgroup~$\Z_p \rtimes(1+p\Z_p)$ of index~$p-1$ in~$\Z_p \rtimes \Z_p^\times$. Consider the map
\begin{align}
\phi \colon K_1(\Lambda(\Z_p \rtimes \Z_p^\times)) &\longrightarrow K_1(\Lambda(\Z_p^\times)) \times K_1(\Lambda(\Z_p \rtimes (1 + p \Z_p)))
\\
\lambda &\longmapsto (\lambda_0, \lambda_1)
\end{align}
where~$\lambda_0$ is the image under map induces by the natural projection~$\Z_p\rtimes\Z_p^\times \to \Z_p^\times$, and~$\lambda_1$ is the image of the norm map. Since~$1 + p\Z_p$ is an open subgroup of~$\Z_p^\times$, we have a norm map 
\begin{align}
\phi_0 \colon K_1(\Lambda(\Z_p^\times)) \longrightarrow K_1(\Lambda( 1 + p \Z_p))
\end{align}
and we have a map
\begin{align}
\phi_1 \colon K_1(\Lambda(\Z_p \rtimes (1 + p\Z_p))) \longrightarrow K_1(\Lambda( 1 + p \Z_p))
\end{align}
induce by the projection map. 
\begin{proposition}\label{prop:5.2.8}
A pair 
\begin{align}
(\lambda_0,\lambda_1) \in K_1(\Lambda(\Z_p^\times)) \times K_1(\Lambda(\Z_p \rtimes (1 + p \Z_p)))
\end{align}
lies in the image of~$\phi$ if and only if~$\phi_0(\lambda_0) = \phi_1(\lambda_1)$.
\end{proposition}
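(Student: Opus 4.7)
My plan is to treat necessity and sufficiency separately, reducing sufficiency to a surjectivity statement about norms on augmentation kernels via the canonical section of $\pi_G$.

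For necessity, I would invoke the commutative square of topological group homomorphisms consisting of the two quotient maps $G\twoheadrightarrow\Z_p^\times$ and $G'\twoheadrightarrow 1+p\Z_p$ (each killing the normal $\Z_p$) together with the inclusions $G'\hookrightarrow G$ and $1+p\Z_p\hookrightarrow\Z_p^\times$ of the respective index-$(p-1)$ subgroups. Applying $K_1(\Lambda(-))$ turns the horizontal arrows into the projection maps $\pi_G,\pi_{G'}$ and the vertical arrows into the norm maps $\mathrm{Nr}$ and $\phi_0$; commutativity of the resulting square is a standard Mackey-type compatibility, readily verified by noting that the two augmentation kernels $\ker(\Lambda(G)\to\Lambda(\Z_p^\times))$ and $\ker(\Lambda(G')\to\Lambda(1+p\Z_p))$ are both generated by $\{g-1:g\in\Z_p\}$ and correspond under the inclusion $\Lambda(G')\subset\Lambda(G)$. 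Evaluating on any $\lambda$ with $\phi(\lambda)=(\lambda_0,\lambda_1)$ yields $\phi_0(\lambda_0)=\phi_1(\lambda_1)$.

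For sufficiency, the semidirect product decomposition $G=\Z_p\rtimes\Z_p^\times$ furnishes the canonical section $s\colon\Z_p^\times\cong\{0\}\rtimes\Z_p^\times\hookrightarrow G$ of $\pi_G$, inducing $s_*\colon K_1(\Lambda(\Z_p^\times))\to K_1(\Lambda(G))$ with $\pi_G\circ s_*=\mathrm{id}$. Given a compatible pair $(\lambda_0,\lambda_1)$, set $\mu:=s_*(\lambda_0)$ and $\eta:=\lambda_1\cdot\mathrm{Nr}(\mu)^{-1}$. The necessity direction applied to $\mu$ yields $\phi_1(\mathrm{Nr}(\mu))=\phi_0(\pi_G(\mu))=\phi_0(\lambda_0)=\phi_1(\lambda_1)$, so $\eta\in\ker(\phi_1)$. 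It is then enough to find $\nu\in\ker(\pi_G)$ with $\mathrm{Nr}(\nu)=\eta$, since $\lambda:=\mu\nu$ will satisfy $\pi_G(\lambda)=\lambda_0$ and $\mathrm{Nr}(\lambda)=\lambda_1$, exhibiting $(\lambda_0,\lambda_1)$ as $\phi(\lambda)$.

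The main obstacle is therefore the surjectivity of the restricted norm $\mathrm{Nr}\colon\ker(\pi_G)\twoheadrightarrow\ker(\phi_1)$. I would establish this by exploiting the Teichm\"uller lift $\mu_{p-1}\hookrightarrow\Z_p^\times\subset G$, which realizes $G=G'\rtimes\mu_{p-1}$ and presents $\Lambda(G)$ as a free left $\Lambda(G')$-module of rank $p-1$ with basis $\mu_{p-1}$. Since $|\mu_{p-1}|=p-1$ is a unit in $\Z_p$, the idempotents $e_\chi:=\frac{1}{p-1}\sum_{\sigma\in\mu_{p-1}}\chi^{-1}(\sigma)\sigma$ for $\chi\in\widehat{\mu_{p-1}}$ provide a Maschke-type decomposition of the relevant $\mu_{p-1}$-modules into isotypic components. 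Filtering both $\ker(\pi_G)$ and $\ker(\phi_1)$ by powers of the augmentation ideal generated by $\{g-1:g\in\Z_p\}$ and computing the norm on each graded quotient in terms of the above basis, one reduces the surjectivity to an elementary assertion on each isotypic piece, which is handled by direct computation using the invertibility of $p-1$. This Maschke-plus-filtration step is the single substantive technical ingredient; all other steps are formal.
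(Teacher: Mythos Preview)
The paper itself omits the proof of this proposition entirely (the proof environment reads ``We omit the proof''), so there is no argument in the paper to compare against. Your proposal therefore goes well beyond what the paper supplies.

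Your treatment of necessity is correct: the compatibility $\phi_0\circ\pi_G=\phi_1\circ\mathrm{Nr}$ is exactly the commutativity of the square built from the two projections and the two norms, and this is standard. Your reduction of sufficiency---using the canonical section $s\colon\Z_p^\times\hookrightarrow G$ to produce $\mu=s_*(\lambda_0)$ and reducing to the surjectivity of $\mathrm{Nr}\colon\ker(\pi_G)\to\ker(\phi_1)$---is also clean and correct.

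The one place where your sketch is thin is the final ``Maschke-plus-filtration'' step. Note that the idempotents $e_\chi$ you write down are \emph{not} central in $\Lambda(G)$, since $\mu_{p-1}$ acts nontrivially on the normal $\Z_p$; so you do not get a ring decomposition of $\Lambda(G)$ into isotypic pieces, only a left-module decomposition. The argument can still be made to work---the point is that $G'$ is pro-$p$, $\Lambda(G')$ is local, and the index $[G:G']=p-1$ is a unit in $\Z_p$, which is exactly the situation handled by the general machinery in \cite{Kakde 11} and \cite{Muenster 11} (see in particular the reduction steps for $K_1$ of Iwasawa algebras of one-dimensional $p$-adic Lie groups with a prime-to-$p$ quotient). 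If you want a self-contained argument, it is cleaner to invoke that $\Lambda(G)\cong M_1(\Lambda(G'))\ast\mu_{p-1}$ is a crossed product with $|\mu_{p-1}|\in\Z_p^\times$, and then use the standard fact that in this situation the norm map on $K_1$ is surjective onto the $\mu_{p-1}$-invariants; combined with the observation that $\ker(\phi_1)$ is $\mu_{p-1}$-stable, this gives what you need. Your filtration idea would also work but requires more bookkeeping than you indicate.
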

\begin{proof}
We omit the proof.
\end{proof}
\par
The above proposition is the reason why the non-commutative congruences for the group $\Z_p \rtimes \Z_p^\times$ only involve the subquotions of $\Z_p \rtimes (1 + p\Z_p)$. Our strategy to prove existence of a $p$-adic $L$-function for $\Z_p \rtimes \Z_p^\times$ is to construct $\lambda_0$ and $\lambda_1$ satisfying the above compatibility condition. In order to construct $\lambda_1$, we apply Theorem~\ref{thm:5.1.7} to the group~$G = \Z_p \rtimes (1 + p\Z_p)$. The open subgroups~$G_r$ correspond to the groups~$\Gal(\mscr K_\infty/\mscr M_r)$, whose abelianizations are given by
\begin{align}
G_r^{\mathrm{ab}} = \Gal(\mscr M_r(\mu_{p^\infty}, m^{\frac{1}{p^r}}) / \mscr M_r).
\end{align}
\par
Fix a CM type~$\Sigma_0$ for~$\mscr M_0$, which is~$p$-ordinary, then for each~$r\ge 1$,~$\mscr M_r$ has a~$p$-ordinary CM type~$\Sigma_r$ induced from~$\mscr M_0$. For simplicity, let~$\mscr M = \mscr M_1$, and let~$\Sigma$ be the CM type of~$\mscr M$. suppose we have an algebraic Hecke character
\begin{align}
\chi \colon \adele_{\mscr M} \longrightarrow \C^\times
\end{align}
whose infinity type is~$k\Sigma$ with a positive integer~$k$, and unramified at all places dividing~$p$.
\par
\begin{remark}\label{rmk:5.2.12}
In practice, such~$\chi$ can be obtained from the Hecke character~$\chi_E$ associated to an elliptic curve~$E$ defined over~$\mscr M_0$ with complex multiplication by~$\mscr M_0$, with good ordinary reduction at all places dividing~$p$. Such character will have infinity type~$\Sigma_0$, and the character
\begin{align}\label{eq:5.2.12}
\chi_E^k \circ \mathrm{Norm} \colon \adele_\mscr M \to \C^\times
\end{align}
will have infinity type~$k\Sigma$. We will come back to this situation in the next subsection.
\end{remark}
\par
By composing~$\chi$ with the norm map~$\adele_{\mscr M_r}^\times \to \adele_{\mscr M}^\times$, we obtain
\begin{align}
\chi_r \colon \adele_{\mscr M_r}^\times \longrightarrow \C^\times
\end{align}
for each~$r\ge 1$.
\par
Let~$\mfrak C$ be the conductor of~$\chi$, which is an ideal of~$\mscr M$ relatively prime to~$p$. Fix a polarization~$\delta \in \mscr M$, with polarization ideal~$\mfrak c$. Now we are in the situation of the main results of our paper. In particular, for each~$r\ge 1$, we obtain the~$p$-adic~$L$-function
\begin{align}
\mcal L_r \in \Lambda(Z_r)
\end{align}
of Definition~\ref{def:3.8.1}, where~$Z_r$ is the Galois group
\begin{align}
\lim_{\infty \leftarrow n } \Gal(\mscr M_r(p^n\mfrak C) / \mscr M_r)
\end{align}
introduced in \eqref{eq:2.3.7}. Let~$\mcal L_r(\chi_r)$ be the branch of~$\mcal L_r$ along~$G_r$ with respect to~$\chi_r$. That is to say,~$\mcal L_r(\chi_r)$ is the unique element of~$\Lambda(G_r^{\mathrm{ab}})$ such that
\begin{align}\label{eq:5.2.17}
\int_{G_r^\mathrm{ab}} \upphi d\mcal L_r(\chi) = \int_{Z_r} \widetilde \upphi \widehat\chi d\mcal L
\end{align}
for every continuous function~$\upphi \colon G_r^\mathrm{ab} \to \Z_p$, where~$\widetilde \upphi$ denotes the pull-back of~$\upphi$ along the natural projection map~$Z_r \to G_r^\mathrm{ab}$.
\begin{remark}\label{rmk:5.2.18}
When~$\chi$ arises from an imaginary quadratic field in the manner explained in Remark~\ref{rmk:5.2.12}, then we let~$\mcal L_0(\chi_0)$ to be the branch of~$\mcal L_0$ along~$G^\mathrm{ab} = \Gal(\mscr M_\infty/ \mscr M_0)$ with respect to~$\chi_0$. If we put~$\lambda_0 = \mcal L_0(\chi_0)$ and~$\lambda_1 = \mcal L_1(\chi_1)$, then~$\lambda_0$ and~$\lambda_1$ satisfies the compatibility condition in Proposition~\ref{prop:5.2.8}, in the sense that if we let
\begin{align}
\phi_0 \colon \Lambda(\Z_p^\times) \longrightarrow \Lambda(1 + p \Z_p)
\end{align}
be the norm map and let
\begin{align}
\phi_1 \colon \Lambda(\Z_p / p\Z_p \times (1 + p \Z_p)) \to \Lambda(1 + p \Z_p)
\end{align}
be the natural projection, then we have~$\phi_0(u\cdot \lambda_0) = \phi_1(\lambda_1)$ for a $p$-adic unit $u\in \Z_p^\times$.
\end{remark}
\par
As a consequence of Theorem~\ref{thm:main}, we establish the transfer congruence between~$\mcal L_r(\chi_r)$ for~$r\ge1$. The factor $u$ was computed in the proof of Theorem~7.2 in \cite{CMFT}, where we worked with $p\mcal L_1$ instead of $\mcal L_1$.
\begin{theorem}\label{thm:5.2.16}
We have
\begin{align}
\ver_{r-1}(\mcal L_{r-1}(\chi)) = \mcal L_{r} (\chi)  \hspace{5mm}(\textrm{mod }T_r)
\end{align}
for each~$r\ge 2$.
\begin{proof}
The key fact is the commutativity of the diagram
\begin{align}
\xymatrix{
Z_{r-1}\ar[r]\ar[d] & G_{r-1}^\mathrm{ab}\ar[d]
\\
Z_{r}\ar[r] & G_{r}^\mathrm{ab}
}
\end{align}
where the two vertical maps are transfer maps, and the horizontal maps are the natural projections. Having observed this, it is easy to see the congruence from the characterization \eqref{eq:5.2.17} of~$\mcal L_r(\chi_r)$ in terms of~$\mcal L_r$, and the transfer congruence between~$\mcal L_r$. 
\end{proof}
\end{theorem}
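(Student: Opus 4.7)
The plan is to reduce the congruence on $\Lambda(G_r^{\mathrm{ab}})$ to the transfer congruence already established for the ``big'' Iwasawa algebra $\Lambda(Z_r)$ in Theorem~\ref{thm:main}, and then to apply the $\chi_r$-branch specialization. The decisive point is functoriality of the transfer map and of the trace ideal under the natural surjection $Z_r \to G_r^{\mathrm{ab}}$.

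First, I would record the commutative diagram
\begin{align}
\xymatrix{
Z_{r-1}\ar[r]\ar[d]_{\ver} & G_{r-1}^{\mathrm{ab}}\ar[d]^{\ver_{r-1}}
\\
Z_{r}\ar[r] & G_{r}^{\mathrm{ab}},
}
\end{align}
where the horizontal arrows are the canonical surjections induced by Galois theory (the target of the Artin map surjects onto the Galois group of any intermediate abelian extension), and the vertical arrows are the transfer maps associated to the inclusions $\Gal(\mscr K_\infty/\mscr M_r) \hookrightarrow \Gal(\mscr K_\infty/\mscr M_{r-1})$ and the analogous inclusion for the ray class groups at level $\mfrak C$. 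The commutativity reduces to the well-known functoriality of transfer under passage to a quotient. I would then extend this diagram $\Z_p$-linearly to the Iwasawa algebras, still writing $\ver$ for the induced maps on $\Lambda$.

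Next, Theorem~\ref{thm:main} applied to the extension $\mscr M_r/\mscr M_{r-1}$ gives
\begin{align}
\mcal L_r \equiv \ver(\mcal L_{r-1}) \pmod{T}
\end{align}
in $\Lambda(Z_r)$, where $T \subset \Lambda(Z_r)$ is the trace ideal for the action of $\Gal(\mscr M_r/\mscr M_{r-1})$; here I use that Proposition~\ref{prop:4.5.1} and Proposition~\ref{prop:4.5.9} verify the hypotheses \eqref{P'} and \eqref{C} for the tower $\mscr M_r = \mscr M_0(\mu_{p^r})$. Now I would apply the $\chi_r$-branch, namely the $\Z_p$-linear map $\Lambda(Z_r) \to \Lambda(G_r^{\mathrm{ab}})$ defined by $\lambda \mapsto \mcal L_r(\chi_r)$-style integration: for $\lambda \in \Lambda(Z_r)$, send $\lambda$ to the unique element $\lambda(\chi_r) \in \Lambda(G_r^{\mathrm{ab}})$ characterized by $\int_{G_r^{\mathrm{ab}}} \upphi \, d\lambda(\chi_r) = \int_{Z_r} \widetilde{\upphi}\, \widehat{\chi_r}\, d\lambda$ for all continuous $\upphi \colon G_r^{\mathrm{ab}} \to \Z_p$. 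This map sends $\mcal L_r$ to $\mcal L_r(\chi_r)$, and by commutativity of the diagram combined with the fact that $\chi_r = \chi_{r-1} \circ \mathrm{Norm}$, it sends $\ver(\mcal L_{r-1})$ to $\ver_{r-1}(\mcal L_{r-1}(\chi_{r-1}))$.

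The step I expect to be the main obstacle is verifying that the $\chi_r$-branch map carries the trace ideal $T \subset \Lambda(Z_r)$ into the trace ideal $T_r \subset \Lambda(G_r^{\mathrm{ab}})$. This is a compatibility between the action of $\Gal(\mscr M_r/\mscr M_{r-1})$ on $Z_r$ and its action on $G_r^{\mathrm{ab}}$, together with the $\Gal(\mscr M_r/\mscr M_{r-1})$-invariance of $\widehat{\chi_r}$ (which follows since $\chi_r$ is the norm of $\chi$ from $\mscr M$). Granting this, any element of $T$, being a trace of some element of $\Lambda(Z_r)$, is mapped to the trace of its image in $\Lambda(G_r^{\mathrm{ab}})$, hence lies in $T_r$. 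Applying the branch map to both sides of the congruence in $\Lambda(Z_r)$ then yields exactly the desired congruence $\mcal L_r(\chi_r) \equiv \ver_{r-1}(\mcal L_{r-1}(\chi_{r-1})) \pmod{T_r}$, completing the proof.
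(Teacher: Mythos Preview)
Your proposal is correct and follows essentially the same route as the paper's own proof: the paper likewise reduces everything to the commutativity of the square relating $Z_{r-1},Z_r$ to $G_{r-1}^{\mathrm{ab}},G_r^{\mathrm{ab}}$ under transfer and projection, and then invokes the characterization \eqref{eq:5.2.17} together with the transfer congruence of Theorem~\ref{thm:main}. Your added discussion of why the $\chi_r$-branch map carries the trace ideal $T\subset\Lambda(Z_r)$ into $T_r\subset\Lambda(G_r^{\mathrm{ab}})$ --- via Galois-equivariance of the projection and the $\Gal(\mscr M_r/\mscr M_{r-1})$-invariance of $\widehat{\chi_r}$ --- simply spells out what the paper leaves implicit in the phrase ``it is easy to see.''
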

\par
Although Theorem~\ref{thm:5.2.16} verifies \eqref{MS3} of Theorem~\ref{thm:5.1.7}, we do not know yet whether
\begin{align}\label{eq:5.2.22}
\mcal L_r(\chi_r) \in \Lambda(G_r^\mathrm{ab})_S^\times
\end{align}
holds for all~$r\ge 1$. We first observe that the conditions \eqref{eq:5.2.22} for~$r\ge 0$ are rather strongly dependent to each other.
\begin{proposition}
Suppose that \eqref{eq:5.2.22} holds for~$r=0$. Then it holds for all~$r\ge 0$.
\end{proposition}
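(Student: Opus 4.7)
The plan is to proceed by induction on $r$, with the inductive step $r-1 \to r$ for $r \ge 2$ handled by the transfer congruence (Theorem~\ref{thm:5.2.16}), and the base step $r = 0 \to r = 1$ handled through the compatibility of Remark~\ref{rmk:5.2.18} together with a Galois-equivariance argument coming from Proposition~\ref{prop:toric}.

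For the inductive step with $r \ge 2$, assume $\mcal L_{r-1}(\chi_{r-1}) \in \Lambda(G_{r-1}^{\mathrm{ab}})_S^\times$. Since $\ver_r\colon \Lambda(G_{r-1}^{\mathrm{ab}}) \to \Lambda(G_r^{\mathrm{ab}})$ is the ring map induced by the transfer homomorphism of groups, it extends to the localizations at $S$ and preserves units; hence $\ver_r(\mcal L_{r-1}(\chi_{r-1}))$ is a unit in $\Lambda(G_r^{\mathrm{ab}})_S$. Theorem~\ref{thm:5.2.16} gives
\[
\mcal L_r(\chi_r) = \ver_r(\mcal L_{r-1}(\chi_{r-1})) + t_r, \qquad t_r \in T_r.
\]
If one can show that $T_r \cdot \Lambda(G_r^{\mathrm{ab}})_S$ lies in the Jacobson radical of $\Lambda(G_r^{\mathrm{ab}})_S$, then a unit plus a radical element is a unit, giving $\mcal L_r(\chi_r) \in \Lambda(G_r^{\mathrm{ab}})_S^\times$.

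For the base step, Remark~\ref{rmk:5.2.18} gives $\phi_0(u\cdot\mcal L_0(\chi_0)) = \phi_1(\mcal L_1(\chi_1))$ in $\Lambda(1+p\Z_p)_S$ for some $p$-adic unit $u$, where $\phi_0$ is the norm and $\phi_1$ is the natural projection. The hypothesis makes the left side a unit, so $\phi_1(\mcal L_1(\chi_1))$ is a unit. To upgrade this to a unit in the larger algebra, I would decompose $\Lambda(G_1^{\mathrm{ab}})_S = \Lambda(1+p\Z_p)_S[\Z/p]$ into eigenspaces under the finite subgroup $\Z/p \subset G_1^{\mathrm{ab}}$ corresponding to the false Tate direction. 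The component attached to the trivial character of $\Z/p$ is precisely the image under $\phi_1$, hence a unit; the $p-1$ non-trivial characters form a single orbit under the action of $\Gal(\mscr M_1/\mscr M_0) \cong (\Z/p)^\times$, and the Galois-equivariance of Katz's measure established in Proposition~\ref{prop:toric} transports the unit property through the orbit, so all eigenspace components are units.

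The main obstacle is the verification that $T_r\cdot\Lambda(G_r^{\mathrm{ab}})_S$ lies in the Jacobson radical. The intended route is to identify maximal ideals of $\Lambda(G_r^{\mathrm{ab}})_S$ with Galois orbits of specializations under $\Gal(\mscr M_r/\mscr M_{r-1}) \cong \Z/p$: on $\Gal$-invariant characters, a typical trace element specializes to $p$ times something and hence lies in the radical (as $p$ itself does in a localized Iwasawa algebra along the cyclotomic tower), while for non-invariant characters the trace evaluates to a sum over a full $\Z/p$-orbit of distinct character values, which again lies in the maximal ideal determined by that orbit. The key technical point to confirm is that the multiplicative set $S$ genuinely inverts away the primes on which $T_r$ is non-radical, i.e.\ that the residues of $\Lambda(G_r^{\mathrm{ab}})_S$ are controlled only by the cyclotomic $\Z_p$-direction and not by the finite torsion coming from the false Tate component.
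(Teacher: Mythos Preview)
Your approach diverges from the paper's and carries a real gap you yourself flag. The paper does \emph{not} use the transfer congruence (Theorem~\ref{thm:5.2.16}) here at all. Instead it invokes the elementary criterion that, since each $G_r^{\mathrm{ab}}$ is one-dimensional, an element of $\Lambda(G_r^{\mathrm{ab}})$ lies in $\Lambda(G_r^{\mathrm{ab}})_S^\times$ if and only if its image in $\Lambda(G_r^{\mathrm{ab}})/p$ is a non-zero-divisor. With this criterion, each induction step is handled via the \emph{norm/projection compatibility} of the $\mcal L_r(\chi_r)$ (the condition of type \eqref{MS1}, which follows from the interpolation formula), exactly as in the $r=0\to 1$ step: the norm map preserves non-zero-divisors mod $p$, while the projection $\Lambda(G_r^{\mathrm{ab}})/p \to \Lambda(H)/p$ (killing the $p$-torsion Kummer direction) has the property that an element is a non-zero-divisor upstairs iff its projection is nonzero downstairs, because $\Lambda(G_r^{\mathrm{ab}})/p \cong (k[Y]/Y^{p^a})[[T]]$ and $Y$ is nilpotent. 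This avoids any Jacobson-radical analysis entirely.

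Your route via $T_r\cdot\Lambda(G_r^{\mathrm{ab}})_S \subset \mathrm{Jac}$ is not obviously wrong --- one can check in small cases that the orbit sums generating $T_r$ become nilpotent mod $p$ --- but you have not proved it, and it is strictly more work than needed. Separately, your base step is over-engineered: decomposing into eigenspaces for $\Z/p$ and invoking Proposition~\ref{prop:toric} to ``transport the unit property through the orbit'' is vague (that proposition concerns the stabilizer action on Eisenstein series, not a Galois action on the measure $\mcal L_1(\chi_1)$), and in any case unnecessary: once $\phi_1(\mcal L_1(\chi_1))$ is nonzero mod $p$, the nilpotence of the augmentation ideal of $k[\Z/p]$ immediately forces $\mcal L_1(\chi_1)$ to be a non-zero-divisor mod $p$.
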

\begin{proof}
We proceed by induction. Since~$G_r^\mathrm{ab}$ is one dimensional for every~$r\ge 0$, an element~$\lambda \in \Lambda(G_r^\mathrm{ab})$ belongs to~$\Lambda(G_r^\mathrm{ab})_S^\times$ if and only if~$\lambda$ is not a zero divisor in~$\Lambda(G_r^\mathrm{ab})/p\Lambda(G_r^\mathrm{ab})$. 
\par
From~$r=0$ to~$r=1$, we use the observation made in Remark~\ref{rmk:5.2.18}, that~$\mcal L_0(\chi_0)$ and~$\mcal L_1(\chi_1)$ satisfies a compatibility condition with respect to~$\phi_0$ and~$\phi_1$. Since~$\phi_0$ maps a non-zero divisor to a non-zero divisor, and~$\phi_1$ maps a zero devisor to a zero divisor, we have the induction step from~$r=0$ to~$r=1$. The remaining induction steps can be verified similarly by comparing~$\mcal L_r(\chi_r)$ and~$\mcal L_{r-1}(\chi_{r-1})$ using relevant norm and projection maps.
\end{proof}
\par
Unfortunately, we do not know how to theoretically prove \eqref{eq:5.2.22} for~$r=0$, and we assume it only for the rest of the current subsection. If we further assume that~$\mcal L_r(\chi_r)$ satisfies the commutative Iwasawa main conjecture for every $r$, then it will follow that there exists a non-commutative~$p$-adic~$L$-function in~$K_1(\Lambda(G)_S)$, which will satisfy the non-commutative Iwasawa main conjecture. Without the assumption on the commutative main conjectures, we need to work with the completion of localized Iwasawa algebras.
\begin{corollary}
Suppose \eqref{eq:5.2.22}. Then there exists an element
\begin{align}
\mcal L(G) \in K_1(\widehat{\Lambda(G)_S})
\end{align}
which satisfies 
\begin{align}
\widehat \theta_S^r (\mcal L(G)) = \mcal L_r(\chi_r)
\end{align}
for every $r \ge 0$, where $\widehat \theta_S^r$ was defined in \eqref{eq:5.1.6}.
\end{corollary}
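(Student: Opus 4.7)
The plan is to invoke the injectivity and image description of~$\widehat\theta_S$ furnished by Theorem~\ref{thm:5.1.7}, combined with the reduction to the open subgroup~$G' = \Z_p \rtimes (1+p\Z_p)$ provided by Proposition~\ref{prop:5.2.8}. Since the full group~$G \cong \Z_p \rtimes \Z_p^\times$ fails to have~$G/H$ pro-cyclic pro-$p$, Theorem~\ref{thm:5.1.7} does not apply to~$G$ directly, but it does apply to~$G'$, whose quotient~$1+p\Z_p \cong \Z_p$ does. The target element~$\mcal L(G)$ will therefore be built from a pair~$(\lambda_0,\lambda_1) \in K_1(\widehat{\Lambda(\Z_p^\times)_S}) \times K_1(\widehat{\Lambda(G')_S})$ satisfying the compatibility~$\phi_0(\lambda_0)=\phi_1(\lambda_1)$ of Proposition~\ref{prop:5.2.8} (or rather its evident analogue for completed localized~$K_1$-groups).

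The construction of~$\lambda_0$ is immediate: take~$\lambda_0 := \mcal L_0(\chi_0)$, which is a unit in~$\widehat{\Lambda(\Z_p^\times)_S}$ by hypothesis~\eqref{eq:5.2.22}. The construction of~$\lambda_1$ is the substantive step, and it proceeds by verifying that the family~$(\mcal L_r(\chi_r))_{r\ge 1}$ satisfies the three conditions describing the image of~$\widehat\theta_S$. For \eqref{MS1} (norm compatibility), one uses the defining interpolation formula~\eqref{eq:5.2.17}: for any continuous~$\upphi \colon G_i^{\mathrm{ab}} \to \Z_p$, its pullback to~$G_j^{\mathrm{ab}}$ integrates against~$\mcal L_j(\chi_j)$ to the same value as~$\upphi$ integrates against~$\mcal L_i(\chi_i)$, because both reduce to~$\int_{Z_i}\widetilde\upphi\widehat\chi d\mcal L_i$ through the compatibility of the projections~$Z_r \to G_r^{\mathrm{ab}}$ under~$\mathrm{Nr}_{j,i}$. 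For \eqref{MS2} ($\Gamma$-invariance), one unwinds that the conjugation action of a lift of~$\Gamma$ on~$G_r^{\mathrm{ab}}$ corresponds to the Galois action on characters; since~$\chi_r = \chi \circ \mathrm{Norm}_{\mscr M_r/\mscr M}$ is intrinsically attached to the fixed base field~$\mscr M$, the branch~$\mcal L_r(\chi_r)$ is stable under this action. Condition \eqref{MS3} is precisely the transfer congruence of Theorem~\ref{thm:5.2.16}, which upgrades without difficulty to the congruence modulo~$\widehat T_{r,S}$ required for the image of~$\widehat\theta_S$.

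Given the three conditions, Theorem~\ref{thm:5.1.7} yields a unique~$\lambda_1 \in K_1(\widehat{\Lambda(G')_S})$ with~$\widehat\theta_S^r(\lambda_1) = \mcal L_r(\chi_r)$ for all~$r\ge 1$. The compatibility~$\phi_0(\lambda_0) = \phi_1(\lambda_1)$ is then the content of Remark~\ref{rmk:5.2.18}, the~$p$-adic unit~$u$ appearing there being absorbed by rescaling~$\lambda_0$. Gluing via Proposition~\ref{prop:5.2.8} produces the desired~$\mcal L(G) \in K_1(\widehat{\Lambda(G)_S})$, and uniqueness follows from the injectivity of~$\widehat\theta_S$.

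The main obstacle is the verification of \eqref{MS2}; while it is formally expected from the construction of~$\mcal L_r$ as a specialization of the Katz--Hida--Tilouine--Hsieh measure attached to~$\mscr M_r$, it requires a careful match between conjugation on the non-abelian Galois group and the Galois action on the relevant Hecke characters, together with the observation that characters in the norm image are preserved. A secondary point requiring attention is the validity of the analogue of Proposition~\ref{prop:5.2.8} after~$p$-adic completion of the localization at~$S$; this should follow from the same Mayer--Vietoris patching argument, but deserves explicit verification.
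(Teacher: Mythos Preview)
Your proposal is correct and follows essentially the same approach as the paper: apply Theorem~\ref{thm:5.1.7} to the open subgroup~$G' = \Z_p \rtimes (1+p\Z_p)$, glue with the abelian piece via Proposition~\ref{prop:5.2.8}, and attribute \eqref{MS3} to the transfer congruence while deriving \eqref{MS1}, \eqref{MS2}, and the~$\phi_0/\phi_1$ compatibility from the interpolation formula. The paper's own proof is considerably terser, simply asserting that the remaining conditions ``follow from the interpolation formula of~$\mcal L_r$'' without spelling out the mechanism you sketch for \eqref{MS1} and \eqref{MS2}; your honest flagging of the~$\Gamma$-invariance verification and the completed-localized analogue of Proposition~\ref{prop:5.2.8} as points requiring care is in fact more scrupulous than the paper itself.
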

\begin{proof}
The existence of certain element~$\mcal L(G)$ in~$ K_1(\widehat{\Lambda(G)_S})$ follows from Theorem~\ref{thm:5.1.7}, combined with Proposition~\ref{prop:5.2.8}. We need to verify then the conditions of Theorem~\ref{thm:5.1.7} and Proposition~\ref{prop:5.2.8} are satisfied the family $\widehat \theta_S^r (\mcal L(G))$. The condition \eqref{MS3} is satisfied by Theorem~\ref{thm:main}, and the other conditions, including one for Proposition~\ref{prop:5.2.8}, follow from the interpolation formula of $\mcal L_r$.
\end{proof}
\par
The discussion on the interpolation property of $\mcal L(G)$ is postponed to Subsection~\ref{subsection:5.4}, since we first need to classify the Artin representations of $G$, which we will treat in the next subsection.
\subsection{Congruence between special values of~$L$-functions}\label{subsection:5.3}
In this subsection, we want to discuss the concrete congruences between special values of~$L$-functions, which can be deduced from the work of current paper. We keep the notation from the previous subsection. In particular, we will focus on the characters
\begin{align}
\chi_r \colon \adele_{\mscr M_r}^\times \longrightarrow \C^\times
\end{align}
that are constructed from a character~$\chi_0$ of an imaginary quadratic field, as described in Remark~\ref{rmk:5.2.12}.
\par
We will consider two families of congruences. A special case of the first family of congruences was studied in \cite{CMFT}, but the proof was conditional. Here we have unconditional result which holds in general. The second family arises from main result in Theorem~\ref{thm:main}, which illustrates the non-commutative nature of the transfer congruence. Indeed, they are congruence between special values of~$L$-functions of \emph{different} degrees, while the family of~$L$-functions considered in the commutative Iwasawa have the same degree.
\par
Let us first describe the irreducible representations of~$G$ with finite image on a~$\overline \Q_p$-vector space. The classification we give here can be verified easily from the character theory of finite groups, but we simply state the results. For each~$r$, there is an irreducible representation
\begin{align}\label{eq:5.3.2}
\rho_r \colon G \to \GL_{\varphi(p^r)}(\Q_p)
\end{align}
of dimension~$\varphi(p^r)$, where~$\varphi$ is the Euler totient function. It is the representation induced from a character~$\eta_r$ of~$G_r$ which we define now.
\begin{definition}\label{def:5.3.3}
Define a character~$\eta_r$ of~$G_r$ to be a lift of any character
\begin{align}
\Gal(\mscr M_r(m^{\frac{1}{p^r}})/ \mscr M_r) \to \overline\Q_p^\times
\end{align}
whose order is exactly~$p^r$.
\end{definition}
The representation~$\rho_r$ is independent of the choice of~$\eta_r$, and it can be in fact defined over~$\Z$. All the other irreducible representations of~$G$ are constructed by twisting~$\rho_r$. Precisely speaking, for an irreducible representation~$\rho$ of~$G$ on a finite dimensional~$\overline \Q_p$-vector space, there exists a one dimensional character~$\xi$ of~$G$ such that~$\rho$ is of the form~$\rho_r \otimes \xi$ for some~$r$. On the other hand, if we have two characters~$\xi_1$ and~$\xi_2$ of~$G$, then~$\rho_r \otimes \xi_1$ is isomorphic to~$\rho_r\otimes\xi_2$ if and only if the restrictions of~$\xi_1$ and~$\xi_2$ to~$G_r$ are the same.
\par
Apart from the irreducible ones, we will consider
\begin{align}
\sigma_r \colon G \to \GL_{\varphi(p^r)}(\Q_p)
\end{align}
which is the representation induced from the trivial character of~$G_r$. Of course,~$\sigma_r$ can be defined over~$\Z$, and it decomposes into the sum of~$\varphi(p^r)$ one dimensional representations over~$\overline \Q_p$. 
\par
Both~$\rho_r$ and~$\sigma_r$ are defined over~$\Z$, and it makes sense to ask if they are congruent modulo $p$. Indeed, one can easily see that they are congruent modulo~$p$, so we expect a congruence between canonically normalized special values of the corresponding twists of an~$L$-function. In other words, if we let
\begin{align}
L(\chi_0,\rho,s)
\end{align}
be the complex~$L$-function of~$\chi_0$ twisted by~$\rho$, then the special values of
\begin{align}
L(\chi_0,\sigma_r,s),\,\,\textrm{and}\,\,L(\chi_0,\rho_r,s)
\end{align}
at critical points are expected to be congruent modulo~$p$, after normalizing them in a canonical way. From Artin's formalism, we have
\begin{align}
L(\chi_0,\rho_r) = L(\chi_r,\eta_r,s)
\end{align}
and
\begin{align}
L(\chi_0,\sigma_r) = L(\chi_r,s)
\end{align}
so we use our~$p$-adic~$L$-functions~$\mcal L_r(\chi_r)$ to deduce the desired congruences.
\begin{definition}
Define
\begin{align}
\mcal L(\chi_0,\rho_r,n) = \int_{Z_r}\eta_r \widehat\kappa^n d\mcal L_r(\chi_0)
\end{align}
and
\begin{align}
\mcal L(\chi_0,\sigma_r,n) = \int_{Z_r} \widehat\kappa^n d\mcal L_r(\chi_0)
\end{align}
where~$\kappa$ is the norm character defined in \eqref{eq:3.7.10}.
\end{definition}
\par
For~$n=1,2,\cdots,k-1$, we see from the interpolation formula \eqref{eq:3.8.7} that~$\mcal L(\chi_0,\rho_r,n)$ and~$\mcal L(\chi_0,\sigma_r,n)$ are the normalized special values of~$L(\chi_0,\rho_r,n)$ and~$L(\chi_0,\sigma_0,n)$, respectively.
\begin{theorem}
With the notation as above, we have
\begin{align}\label{eq:5.3.13}
\mcal L(\chi_0,\rho_r,n) \equiv \mcal L(\chi_0,\sigma_r,n) \hspace{5mm}\textrm{(mod~$p$)}
\end{align}
for each~$n=1,2,\cdots,k-1$.
\end{theorem}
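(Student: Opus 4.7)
The plan is to prove the congruence by strong induction on $r\ge 0$, and for every integer $n$ rather than only those in the critical range $\{1,\ldots,k-1\}$; this is permissible because the two sides are defined purely as $p$-adic integrals, and the interpolation property plays no role in the induction. The base case $r=0$ is immediate since $\eta_0$ is forced to be trivial. For the inductive step, write the difference as
\begin{align*}
\mcal L(\chi_0,\rho_r,n)-\mcal L(\chi_0,\sigma_r,n)=\int_{G_r^{\mathrm{ab}}}(\eta_r-1)\widehat\kappa^n\,d\mcal L_r(\chi_r),
\end{align*}
apply the transfer congruence $\mcal L_r(\chi_r)\equiv \ver_{r-1}(\mcal L_{r-1}(\chi_{r-1}))\pmod{T_r}$ from Theorem~\ref{thm:5.2.16}, and treat the transfer piece and the trace-ideal piece separately.

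For the transfer piece two local identities do all the work. First, since $\ver_{r-1}$ corresponds under class field theory to the diagonal inclusion $\adele_{\mscr M_{r-1}}^\times\hookrightarrow\adele_{\mscr M_r}^\times$ and $[\mscr M_r:\mscr M_{r-1}]=p$, the adelic absolute value satisfies $|y|_{\mscr M_r}=|y|_{\mscr M_{r-1}}^{p}$, giving $\widehat\kappa\circ\ver_{r-1}=\widehat\kappa^{p}$. Second, the projection formula for the $p^r$-th norm residue symbol combined with $(y,m)_{p^r}^{p}=(y,m)_{p^{r-1}}$ produces $\eta_r\circ\ver_{r-1}=\eta_{r-1}$ for a compatible choice of generators (any other choice differs by a Galois conjugate of $\eta_{r-1}$, which does not affect $\rho_{r-1}$). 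Together these turn the transfer piece into
\begin{align*}
\int_{G_{r-1}^{\mathrm{ab}}}(\eta_{r-1}-1)\widehat\kappa^{np}\,d\mcal L_{r-1}(\chi_{r-1})=\mcal L(\chi_0,\rho_{r-1},np)-\mcal L(\chi_0,\sigma_{r-1},np),
\end{align*}
which lies in $p\mcal I[\zeta_{p^{r-1}}]\subset p\mcal I[\zeta_{p^r}]$ by the inductive hypothesis applied with $n$ replaced by $np$.

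For the trace-ideal piece, Kummer theory shows that the conjugation action of a generator $\sigma$ of $\Gal(\mscr M_r/\mscr M_{r-1})$ on characters of $G_r^{\mathrm{ab}}$ sends $\eta_r$ to $\eta_r^{\chi_{\mathrm{cyc}}(\sigma)}=\eta_r\cdot\psi$, where $\psi$ has exact order $p$ because $\chi_{\mathrm{cyc}}(\sigma)\in 1+p^{r-1}\Z/p^r\Z$ is nontrivial. Since $\widehat\kappa^n$ is fixed by conjugation, for any $x\in\Lambda(G_r^{\mathrm{ab}})$ the $G_{r-1}/G_r$-trace obeys
\begin{align*}
\int\eta_r\widehat\kappa^n\,d\bigl(\textstyle\sum_{i=0}^{p-1}\sigma^i\cdot x\bigr)=\int\eta_r\widehat\kappa^n\bigl(\textstyle\sum_{i=0}^{p-1}\psi^{-i}\bigr)dx=p\int_{\ker\psi}\eta_r\widehat\kappa^n\,dx,
\end{align*}
and replacing $\eta_r$ by $1$ yields $p\int\widehat\kappa^n\,dx$, so both integrals lie in $p\mcal I[\zeta_{p^r}]$ and the trace-ideal contribution is divisible by $p$. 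Combining the two pieces closes the induction. The main obstacle, in my view, is the careful bookkeeping of the Hilbert-symbol identity $\eta_r\circ\ver_{r-1}=\eta_{r-1}$ and matching it with the indeterminate choice in Definition~\ref{def:5.3.3}; a secondary issue is to verify that the characters $\eta_r$ and $\widehat\kappa$ indeed factor through the pro-$p$ cyclotomic Galois group $Z_r$ so that the transfer-congruence machinery from Theorem~\ref{thm:5.2.16} directly applies.
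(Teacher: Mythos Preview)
Your inductive approach via the transfer congruence is substantially heavier than what the paper actually does, and it has a genuine gap at the first step.

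The paper's proof is essentially two observations and uses neither induction nor Theorem~\ref{thm:5.2.16}. First, it shows that $\mcal L(\chi_0,\rho_r,n)$ already lies in $\Z_p$, not merely in $\mcal I[\zeta_{p^r}]$: any two admissible characters $\eta_r,\eta_r'$ in Definition~\ref{def:5.3.3} differ by an element of $\Gal(\overline{\Q}_p/\Q_p)$ acting on the values, yet induce the \emph{same} $\rho_r$, so by the interpolation formula the integral $\int\eta_r\widehat\kappa^n\,d\mcal L_r(\chi_r)$ is independent of this choice and hence Galois-invariant. Second, since $\eta_r$ is congruent to the trivial character modulo the maximal ideal of $\Z_p[\zeta_{p^r}]$, the difference $\mcal L(\chi_0,\rho_r,n)-\mcal L(\chi_0,\sigma_r,n)$ has positive $p$-adic valuation; being already in $\Z_p$, it lies in $p\Z_p$.

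Your induction breaks at the passage $r=0\to r=1$. Theorem~\ref{thm:5.2.16} (and the underlying Theorem~\ref{thm:main}) furnishes the congruence $\mcal L_r(\chi_r)\equiv\ver_{r-1}(\mcal L_{r-1}(\chi_{r-1}))\pmod{T_r}$ only for $r\ge 2$; the extension $\mscr M_1/\mscr M_0$ has degree $p-1$, not $p$, so the whole machinery of Section~\ref{s4} (conditions \eqref{P'}, \eqref{C}, the trace ideal $T_1$) simply does not apply there. You therefore need a separate argument for $r=1$, and the cleanest one is precisely the paper's two-line observation above. Once $r=1$ is in hand, your inductive scheme for $r\ge 2$ is correct---your trace-ideal computation and the identity $\eta_r\circ\ver_{r-1}=\eta_{r-1}$ (which is exactly Lemma~\ref{lemma:5.3.17}) are fine---but at that point you are essentially running the mechanism behind the harder Theorem~\ref{thm:5.3.27} to prove the much easier Theorem under discussion. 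In short: the result follows directly from the congruence $\rho_r\equiv\sigma_r\pmod p$ on representations once one knows the values are $\Gal(\overline{\Q}_p/\Q_p)$-fixed, and the transfer congruence is not needed here.
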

\begin{proof}
Observe first that~$\mcal L(\chi_0,\rho_r,n)$ belongs to~$\Z_p$, although the character~$\eta_r$ takes values in~$\Z_p(\mu_{p^r})$. It is essentially because of the fact that if~$\eta_r$ is of the form~$g\circ \eta'_r$ for some~$g \in \Gal(\overline \Q_p/ \Q_p)$ and a~$\overline \Q_p$-valued character~$\eta'_r$ of~$G_r$, the~$\rho_r$ is the induced representation of either~$\eta_r$ or~$\eta_r'$. Therefore, from the interpolation formula, we see that
\begin{align}
\int_{Z_r} \eta_r \widehat\kappa^n d\mcal L_r(\chi_0) = \int_{Z_r}\eta'_r \widehat\kappa^n d\mcal L_r(\chi_0)
\end{align}
for such~$\eta_r$ and~$\eta_r'$. Since~$\widehat \kappa$ and~$\widehat {\chi_0}$ take values in~$\Z_p^\times$, and~$\mcal L_r$ is defined over~$\Z_p$, it follows that~$\mcal L(\chi_0,\rho_r,n)$ is contained in~$\Z_p$. Since~$\eta_r$ is congruent to the trivial character modulo the maximal ideal of~$\Z_p(\mu_{p^r})$, the congruence \eqref{eq:5.3.13} follows.
\end{proof}
\begin{remark}
For any modular form with weight at least~$2$, the congruence similar to \eqref{eq:5.3.13} is expected to hold. Indeed, it was verified numerically in \cite{Dokchitser} for an elliptic curve with~$(p,r)=(5,1)$, and in \cite{CDLSS} for modular forms of weight at least~$4$ with~$(p,r)=(3,1)$. Here we provide a theoretical proof when the relevant modular form is of CM type. The interpolation formula of Katz~$p$-adic~$L$-function given here and the interpolation formula of \cite{CDLSS} are compared in \cite{CMFT} for~$r=1$ case, but the comparison for~$r\ge 1$ is similar. The problem with our previous approach in \cite{CMFT} was that the interpolation formula of Katz~$p$-adic~$L$-function was not the same as the prediction from the non-commutative Iwasawa theory. It is why the author had to introduce Hypothesis~4 in \cite{CMFT}. On the other hand, the numerical values obtained in \cite{CDLSS} and \cite{Dokchitser} suggest that the prediction by  the non-commutative Iwasawa theory is the correct one. By dividing out~$U_{\mathrm{alg}}$ when we define the~$p$-adic~$L$-function in Definition~\ref{def:3.8.1}, we obtain the~$p$-adic~$L$-function that is compatible with the prediction of the non-commutative Iwasawa theory.
\end{remark}
\par
Now we discuss the second family of congruences which arises from the transfer congruence of Theorem~\ref{thm:main}. In order to do so, we first study the effect of composing the transfer homomorphism to the character~$\eta_r$.
\begin{lemma}\label{lemma:5.3.17}
Let~$r\ge 2$, and let~$\eta_r$ be any character as in Definition~\ref{def:5.3.3}. Let~$\eta_{r-1}'$ be the composition of~$\eta_r\circ \ver_r$, where~$\ver_r$ is the transfer homomorphism
\begin{align}
\ver_r \colon G_{r-1}^{\mathrm{ab}} \longrightarrow G_r^\mathrm{ab}.
\end{align}
Then~$\eta_{r-1}$ is of order exactly~$p^{r-1}$, and satisfies the condition of Definition~\ref{def:5.3.3}. Namely,~$\eta_{r-1}'$ is a lift of a character
\begin{align}
\Gal(\mscr M_{r-1}(m^{\frac{1}{p^{r-1}}})/\mscr M_{r-1}) \longrightarrow \overline \Q_p^\times
\end{align}
whose order is exactly~$p^{r-1}$.
\end{lemma}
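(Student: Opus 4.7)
The plan is to reduce the lemma to an explicit computation of the transfer map on the semidirect-product description of the Galois group. First I would fix the isomorphism $\Gal(\mscr K_\infty / \mscr M_0) \cong \Z_p \rtimes \Z_p^\times$ (with $(y,x)$ corresponding to $\zeta_{p^n}\mapsto \zeta_{p^n}^x$ and $m^{1/p^n}\mapsto \zeta_{p^n}^y m^{1/p^n}$), under which $G_r = \Z_p \rtimes (1+p^r\Z_p)$ for $r\ge 1$. A direct commutator calculation gives $[(y_1,x_1),(y_2,x_2)] = (y_1(1-x_2)+y_2(x_1-1),\,1)$, so $[G_r,G_r] = p^r\Z_p$ sits inside the normal $\Z_p$-factor, and since $1+p^r\Z_p$ is a complement one obtains
\begin{align*}
G_r^{\mathrm{ab}} = (\Z_p/p^r\Z_p) \oplus (1+p^r\Z_p),
\end{align*}
matching the direct product $\Gal(\mscr M_r(m^{1/p^r})/\mscr M_r) \times \Gal(\mscr M_r(\mu_{p^\infty})/\mscr M_r)$ (the two subextensions being linearly disjoint, one Kummer and the other cyclotomic). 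In these coordinates, the hypothesis of Definition~\ref{def:5.3.3} on $\eta_r$ is that it is trivial on the cyclotomic summand and of exact order $p^r$ on the Kummer summand.

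The heart of the argument is a concrete computation of $\ver_r\colon G_{r-1}^{\mathrm{ab}} \to G_r^{\mathrm{ab}}$ on each summand, using the coset representatives $\tau_i = (0, 1+ip^{r-1})$, $i=0,\dots,p-1$, of $G_r$ in $G_{r-1}$. For $g=(y,1)$ one has $g\tau_i = \tau_i \cdot ((1+ip^{r-1})^{-1}y,\,1)$, whence
\begin{align*}
\ver_r\bigl((y,1)\bigr) = \Bigl(y\sum_{i=0}^{p-1}(1+ip^{r-1})^{-1},\, 1\Bigr).
\end{align*}
For $r\ge 2$, expanding each term modulo $p^r$ as $1-ip^{r-1}$ and using $\sum_i i = p(p-1)/2$ shows the sum equals $pu$ with $u\in\Z_p^\times$; so the Kummer-part effect of $\ver_r$ is multiplication by $pu$. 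For $g=(0,x)$ with $x=1+cp^{r-1}$, the coset permutation is the translation $i\mapsto i+c \pmod{p}$ and the resulting local factors $h_i = (0,\,(1+jp^{r-1})^{-1}x(1+ip^{r-1}))$ lie in $1+p^r\Z_p$; after telescoping one is left with $\ver_r((0,x)) = (0,\,x^p)$, so the cyclotomic-part effect is the $p$-th power map.

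Combining these, since $\eta_r$ is trivial on the cyclotomic summand of $G_r^{\mathrm{ab}}$, the composite $\eta_{r-1}' = \eta_r\circ\ver_r$ is trivial on the cyclotomic summand of $G_{r-1}^{\mathrm{ab}}$, so it descends to a character of $\Gal(\mscr M_{r-1}(m^{1/p^{r-1}})/\mscr M_{r-1})$, which is exactly the lifting condition of Definition~\ref{def:5.3.3}. On the Kummer summand, writing $\zeta = \eta_r(1,1) \in \mu_{p^r}$ for the primitive $p^r$-th root that $\eta_r$ attaches to a generator, one has $\eta_{r-1}'(y,1) = \zeta^{puy}$, and since $u\in\Z_p^\times$ the element $\zeta^{pu}$ is primitive of exact order $p^{r-1}$, giving the order claim. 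The main obstacle is the bookkeeping in the coset-representative computation, in particular verifying that $u$ really is a $p$-adic unit (so that the resulting map is exactly "divide the order by $p$"); once these formulas are in hand, extracting the order and the lifting property is immediate.
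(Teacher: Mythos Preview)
Your proof is correct, and it takes a genuinely different route from the paper. The paper argues via the power residue symbol: it interprets $\eta_r$ as the character $\mfrak q \mapsto \left(\frac{m}{\mfrak q}\right)_{p^r}$ through the Artin map, and then verifies the identity
\[
\prod_{\mfrak q \mid \mfrak l}\left(\frac{m}{\mfrak q}\right)_{p^r} = \left(\frac{m}{\mfrak l}\right)_{p^{r-1}}
\]
for primes $\mfrak l$ of $\mscr M_{r-1}$, splitting into the cases $\mfrak l$ split or inert in $\mscr M_r$. The inert case uses that the residue field cardinality $l$ satisfies $l\equiv 1 \pmod{p^{r-1}}$, whence $(1+l+\cdots+l^{p-1})/p \equiv 1 \pmod{p^{r-1}}$. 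Your approach, by contrast, is purely group-theoretic: you compute $\ver_r$ explicitly in the coordinates $\Z_p\rtimes(1+p^{r-1}\Z_p)$ and read off that on the Kummer summand it is multiplication by $pu$ with $u\in\Z_p^\times$, while on the cyclotomic summand it is the $p$-th power map. Your method is more elementary in that it never invokes reciprocity or residue symbols, and it yields an explicit formula for $\ver_r$ that could be reused elsewhere; the paper's method is more arithmetic and keeps the link to Kummer theory visible, which fits the surrounding narrative about $L$-values. One small remark: your verification that $u$ is a unit can be sharpened to $u\equiv 1 \pmod p$, since each summand $(1+ip^{r-1})^{-1}$ is $\equiv 1 \pmod p$ for $r\ge 2$, so $\sum_i(1+ip^{r-1})^{-1}\equiv p \pmod{p^2}$; this is exactly the analogue of the paper's congruence for $b=(1+l+\cdots+l^{p-1})/p$.
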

\begin{proof}
It can be verified easily using the power residue symbol. Let~$\mfrak q$ be any prime of~$\mscr M_{r}$ which is prime to~$p\cdot m$. Let~$q$ be the cardinality of the residue field of~$\mscr M_{r}$ at~$\mfrak q$. Consider the~$p^r$-th power residue symbol
\begin{align}
\left(\frac{m}{\mfrak q}\right)_{p^r} \in \mu_{p^{r}}\subset \mscr M_r
\end{align}
which is characterized by the property that 
\begin{align}
\left(\frac{a}{\mfrak q}\right)_{p^r} \equiv a^{\frac{q-1}{p^r}} \hspace{5mm}(\mathrm{mod}\,\,\mfrak q)
\end{align}
for every $a \in \mscr O_{\mscr M_r}$ relatively prime to $p$ and the residue characteristic of $\mfrak q$. If we interpret the power residue symbol as a character of~$Z_r$ using ideal theoretic Artin reciprocity map, then it is of order exactly~$p^r$, giving rise to~$\eta_r$. Now a proof of the lemma is reduced to the assertion that
\begin{align}\label{eq:5.3.22}
\prod_{\mfrak q | \mfrak l} 
\left(\frac{m}{\mfrak q}\right)_{p^r}
=
\left(\frac{m}{\mfrak l}\right)_{p^{r-1}}
\end{align}
holds for each prime~$\mfrak l$ of~$\mscr M_{r-1}$ prime to~$p\cdot m$.
\par
Let~$l$ be the cardinality of the residue field of~$\mscr M_{r-1}$ at~$\mfrak l$. We prove the above equality by dividing it into two cases. If~$\mfrak l$ splits in~$\mscr M_r$, then the residue field of any~$\mfrak q$ dividing~$\mfrak l$ will be the same as~$l$. Therefore, the left hand side of \eqref{eq:5.3.22} is equal to
\begin{align}
\prod_{\frak q | \mfrak l}m^{\frac{l-1}{p^r}} =m^{\frac{l-1}{p^{r-1}}} 
\end{align}
modulo~$\mfrak q$, so we obtain \eqref{eq:5.3.22}. If~$\mfrak l$ is inert in~$\mscr M_r$, then~$q = l^p$, and the left hand side of \eqref{eq:5.3.22} is
\begin{align}
m^{\frac{l^p-1}{p^r}} =\left(m^{\frac{l-1}{p^{r-1}}}\right)^{b}
\end{align}
where~$b$ is given by
\begin{align}
b = \frac{1+l+\cdots + l^{p-1}}{p}.
\end{align}
We claim that~$b$ is congruent to~$1$ modulo~$p^{r-1}$, which will imply \eqref{eq:5.3.22}. The key fact is that~$l$ is congruent to~$1$ modulo~$p^{r-1}$, because the residue field at~$\mfrak l$ contains all~$p^{r-1}$-th roots of unity. Once we observe this, we can write~$l=1 + a p^{r-1}$ with an integer~$a$. Applying the binomial expansion to each powers of $l$ and rearranging the sum, we obtain the expression
\begin{align}
1+l+\cdots + l^{p-1} = p + (1+2+\cdots + p-1) \cdot ap^{r-1} + a' \cdot p^{2r-2}
\end{align}
for some integer $a'$. Since~$r\ge 2$, we have~$ 2r-3 \ge r-1$, and since~$p$ is odd~$p$ divides~$1+2+\cdots+p-1$. Therefore~$b$ is congruent to~$1$ modulo~$p^{r-1}$. 
\end{proof}
As a consequence of the previous lemma, we establish the second family of congruences.
\begin{theorem}\label{thm:5.3.27}
For each~$r\ge 2$, we have the congruence
\begin{align}\label{eq:5.3.28}
\mcal L(\chi_0, \rho_r,n) \equiv \mcal L(\chi_0,\rho_{r-1},n) \hspace{5mm}(\mathrm{mod}\,\, p)
\end{align}
for any integer~$n$.
\end{theorem}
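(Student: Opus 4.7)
The plan is to evaluate the transfer congruence
\[
\mcal L_r(\chi_0) \equiv \ver_{r-1}(\mcal L_{r-1}(\chi_0)) \pmod{T_r}
\]
from Theorem~\ref{thm:5.2.16} at the character $\eta_r \widehat\kappa^n$ of $G_r^{\mathrm{ab}}$, and then descend the resulting identity from $\Z_p[\zeta_{p^r}]$ to $\Z_p$. By definition the left-hand side evaluates to $\mcal L(\chi_0, \rho_r, n)$, so the work is to identify the right-hand side and bound the contribution of the trace ideal.

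For the main term on the right, I will split the pullback $(\eta_r\widehat\kappa^n)\circ \ver_{r-1}$ as $(\eta_r\circ\ver_{r-1})\cdot(\widehat\kappa\circ\ver_{r-1})^n$. Lemma~\ref{lemma:5.3.17} shows that $\eta_r\circ\ver_{r-1}$ is a valid choice of $\eta_{r-1}$ in the sense of Definition~\ref{def:5.3.3}, so its induction to $G$ is again $\rho_{r-1}$. Applying the classical transfer identity $\chi|_H\circ\ver_{G\to H} = \chi^{[G:H]}$ to the cyclotomic character $\widehat\kappa$ gives $\widehat\kappa\circ\ver_{r-1} = \widehat\kappa^p$. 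Together these identify the right-hand side with $\mcal L(\chi_0, \rho_{r-1}, pn)$.

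For the error term, I will write a typical element of $T_r$ as $\sum_{\sigma \in G_{r-1}/G_r}\sigma(z)$. Since $\widehat\kappa$ is inherited from $\Gal(\overline{\Q}/\Q)$, it is fixed by the $G_{r-1}/G_r$-action, so the evaluation reduces to $\sum_\sigma (\eta_r^\sigma\widehat\kappa^n)(z)$. The key observation is that each $\eta_r^\sigma$ takes values in $\mu_{p^r}$, so $\eta_r^\sigma \equiv 1\pmod{1-\zeta_{p^r}}$; the sum of the $p$ Galois conjugates is therefore divisible by $1-\zeta_{p^r}$. This shows that $\eta_r\widehat\kappa^n(T_r)\subset (1-\zeta_{p^r})\Z_p[\zeta_{p^r}]$.

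The main obstacle is the descent step. Both $\mcal L(\chi_0,\rho_r,n)$ and $\mcal L(\chi_0,\rho_{r-1},pn)$ lie in $\Z_p$, because the induced representations $\rho_r$ and $\rho_{r-1}$ are independent of the choices of $\eta_r$ and $\eta_{r-1}$, forcing Galois-invariance under $\Gal(\Q_p(\zeta_{p^r})/\Q_p)$; this is precisely the argument already used in the proof of \eqref{eq:5.3.13}. Since $\Z_p \cap (1-\zeta_{p^r})\Z_p[\zeta_{p^r}] = p\Z_p$, the descent yields $\mcal L(\chi_0,\rho_r,n) \equiv \mcal L(\chi_0,\rho_{r-1},pn)\pmod p$. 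The proof is then completed by the identity $\widehat\kappa^{pn}\equiv\widehat\kappa^n\pmod p$ pointwise (Fermat's little theorem applied to the $\Z_p^\times$-valued character $\widehat\kappa$), which after evaluation against the $p$-integral measure $\mcal L_{r-1}(\chi_0)$ gives $\mcal L(\chi_0,\rho_{r-1},pn) \equiv \mcal L(\chi_0,\rho_{r-1},n)\pmod p$.
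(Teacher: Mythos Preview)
Your proof is correct and follows the same strategy as the paper's: evaluate the transfer congruence of Theorem~\ref{thm:5.2.16} at $\eta_r\widehat\kappa^n$, identify the main term via Lemma~\ref{lemma:5.3.17}, bound the contribution of $T_r$ modulo the prime above $p$, and descend to $\Z_p$. The only cosmetic differences are that the paper first reduces to $n=0$ (so it never needs $\widehat\kappa\circ\ver=\widehat\kappa^p$ or the final Fermat step) and phrases the error bound via the equivariance $\eta_r(g\cdot z)=\eta_r(z)^g$ rather than the cruder $\eta_r\equiv 1\pmod{1-\zeta_{p^r}}$; both routes yield the same conclusion after intersecting with $\Z_p$.
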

\begin{proof}
Since 
\begin{align}
\mcal L(\chi_0,\rho_r,n) \equiv \mcal L(\chi_0,\rho_{r},n') \hspace{5mm}(\mathrm{mod}\,\, p).
\end{align}
for any pair of integers~$n$ and~$n'$, the congruence is insensitive to a choice of~$n$. So we lose no generality by taking~$n=0$. The assertion of the theorem will follow from the observation that for any~$\lambda \in T_r$, where~$T_r$ is the trace ideal of~$\Lambda(G_r^{\mathrm {ab}})$, we have
\begin{align}\label{eq:5.3.30}
\int_{G_r^{\mathrm {ab}}} \eta_r  d\lambda \in p\Z_p
\end{align}
provided that we know a priori the fact that the value on the left hand side belongs to~$\Z_p$. If we take
\begin{align}
\lambda = \mcal L_r(\chi_r) - \ver_r\left( \mcal L_{r-1}(\chi_{r-1})\right)
\end{align}
then the assertion of the theorem will follow from combining \eqref{eq:5.3.30} with Lemma~\ref{lemma:5.3.17}.
\par
We now prove \eqref{eq:5.3.30}. Observe first that for any~$g\in \Gal(\mscr M_r/ \mscr M_{r-1})$,~$g$ acts on both~$G_r^{\mathrm{ab}}$ and the values of the character~$\eta_r$, and the action is equivariant in the following sense. For any such~$g$ and~$z \in G_r^{\mathrm{ab}}$, we have
\begin{align}
\eta_r(g\cdot z ) = \eta_r(z ) ^g.
\end{align}
From the characterization of~$T_r$ given in Proposition~\ref{prop:4.4.24}, it follows that \eqref{eq:5.3.30} holds.
\end{proof}
\par
\begin{remark}
We note that the congruence \eqref{eq:5.3.28} shows a genuinely non-abelian characteristic, being a congruence between~$L$-functions of different degree.
\end{remark}
\begin{remark}
The author tried to compute the~$L$-values to numerically verify the congruence \eqref{eq:5.3.28}, without any success. If we allow~$k$ to be at least~$3$, then the Euler product converges for at least one critical point, so one is able to obtain a series converging to~$L(\chi_0,\rho_r,k)$ from Euler product. However, for~$r=2$ case, which is the smallest among the relevant, the conductor of the~$L$-function will be at least~$N\cdot p^{p(2p-3)}$, where~$N$ is the conductor of~$\chi_0$. The conductors at primes dividing~$m$, and the conductor arising from ramification in~$\Q_p(m^{1/p^r})/\Q_p$ need to be multiplied to this to obtain the actual conductor, which is bound to be very large. If we look at the table of CM elliptic curves, the smallest one for which~$3$ is good ordinary is the unique CM elliptic curve of conductor~$121$. It follows that the conductor of~$L(\chi_0,\rho_2,s)$ is at least~$121\cdot 3^9$, which is greater than two million. It seems to the author that the numerical verification of \eqref{eq:5.3.28} for modular forms, either of CM type or not, is a challenge from the computational point of view.
\end{remark}
\subsection{The interpolation formula for $\mcal L(G)$}\label{subsection:5.4}
In this subsection, we discuss the interpolation formula for $\mcal L(G)$ and compare it with the prediction of non-commutative Iwasawa theory. First we recall the meaning of the value of an element $\Theta \in K_1(\Lambda(G)_S)$ at a representation of $G$ on a finite dimensional $\Q_p$-vector space. Throughout, $\rho$ will denote a representation of $G$ of the form
\begin{align}
\rho = \rho_r \otimes \xi
\end{align}
where the $\rho_r$ is the irreducible representation introduced in \eqref{eq:5.3.2}, and $\xi$ is a finite order character of $G^{\mathrm{ab}}$. Recall that $\kappa$ was defined to be the adelic norm character, whose $p$-adic avatar was denoted by $\widehat \kappa$. In the paragraphs following Lemma~3.3 of \cite{CFKSV}, it is defined how to define, for each integer $n$, the value of $\Theta$ at the representation $\rho\widehat\kappa^n$. We denote the value as
\begin{align}
\int_G\rho\widehat\kappa^n d\Theta
\end{align}
which is meant to give hints to the interpretation of $\Theta$ as a measure in the commutative case.
\begin{remark}
In the literature, one often writes a representation of $G$ as $\rho \phi^n$, where $\phi$ is the restriction of $\widehat\kappa$ to the cyclotomic $\Z_p$-extension of $\Q$. In fact, as one can see easily from our description of irreducible representation of $G$ given after Definition~\ref{def:5.3.3}, these make no difference when $r\ge 1$.
\end{remark}
\par
As one can expect from our construction of $\mcal L(G)$, the interpolation formula of $\mcal L(G)$ is determined by the interpolation formula of $\mcal L_r$ given in \eqref{eq:3.8.7}, and our intention is to provide an interpolation formula in terms of intrinsic invariants of $\rho$ and $n$, which do not involve the intermediates fields. Unfortunately, we will have to leave some factors involve the intermediate fields, and we will contend ourselves by comparing the interpolating factors with those predicted by the non-commutative Iwasawa theory. To make the computation simpler, we will assume:
\begin{enumerate}
\labitem{Inert}{Inert} Every prime divisor of $m$ is inert in $\mscr M_0$.
\end{enumerate}
\par
Recall that we write $\rho$ in the form $\rho = \rho_r \otimes \xi$. We fix a character $\eta_r$ of $G_r^{\mathrm{ab}}$ whose induced representation of $\rho_r$. Also, we denote by $\xi_r$ the restriction of $\xi$ to $G_r^{\mathrm{ab}}$. We will identify the adelic characters with Galois characters using the Artin reciprocity maps in this subsection.
\par
Let us consider the periods $\Omega_\infty$ appearing in the interpolation formula \eqref{eq:3.8.7}. For each $r$, let $\Sigma^r$ be the CM type of $\mscr M_r$. Then $\Omega_\infty$ is a tuple $(\Omega_{\infty,\sigma})_{\sigma \in \Sigma^r}$ with $\Omega_{\infty,\sigma} \in \C^\times$ for each $\sigma$. To emphasize the dependence on $\mscr M_r$, we often write $\Omega_\infty(\mscr M_r)$ instead of $\Omega_\infty$. Define $\Omega \in \C^\times$ to be
\begin{align}\label{eq:5.4.1}
\Omega  = \delta_0 \cdot \Omega_\infty(\mscr M_0).
\end{align}
The key observation is that we have
\begin{align}\label{eq:5.4.2}
\sigma(\delta_r) \cdot \Omega_{\infty,\sigma}(\mscr M_r) = \Omega
\end{align}
for every $r \ge 0$ and any $\sigma \in \Sigma^r$. Precisely speaking, we can choose the periods so that \eqref{eq:5.4.2} hods, as it was proved in Proposition~5.1 of \cite{CMFT}. For the characters of the form
\begin{align}\label{eq:5.4.7}
\chi = \chi_r \kappa_r^n\eta_r\xi_r
\end{align}
where $\chi_r$ is constructed by composing the norm map to $\chi_0$, the values of $n$ for which $\chi$ belongs the interpolation range are given by
\begin{align}
n=0,1,2,\cdots,k-1,
\end{align}
and the infinity type of $\chi$ is given by
\begin{align}
(k-2n)\Sigma + (1-c)n\Sigma
\end{align}
respectively. Thus we can rewrite the power of $\Omega_\infty$ appearing in the denominator in \eqref{eq:3.8.7} as
\begin{align}
\Omega_\infty^{(k-2n)\Sigma + 2n\Sigma} = \Omega^{k\Sigma} \delta_r^{-k\Sigma}.
\end{align}

\par
Next we consider the epsilon factors. For the moment, let us view $\rho$ as a representation on a complex vector space, using the embeddings $i_p$ and $i_\infty$. Then, for the unique prime $\mfrak p$ in the $p$-adic CM type of $\mscr M_r$, we have
\begin{align}
\epsilon_{\mfrak p}(0,\eta_r\xi_r\chi_r\kappa_r^n,\psi_{\mfrak p}) = \kappa^n\chi_0(C_{p_w}(\rho))\epsilon_{\mfrak p}(0,\eta_r\xi_r, \psi_{\mfrak p})
\end{align}
where $C_{p_w}(\rho)$ is the $p_w$-part of the conductor of $\rho$. Define $\alpha$ to be
\begin{align}
\alpha = (\chi_r)_\mfrak p (\varpi_\mfrak p)
\end{align}
where $\mfrak p$ is as above and $\varpi_\mfrak p$ is the uniformizer at $\mfrak p$. If we let $e_p(\rho)$ be the $p$-adic valuation of $C_{p_w}(\rho)$, then we have
\begin{align}
\kappa^n\chi_0(C_{p_w}(\rho)) = \left( \frac{\alpha}{p^n}\right)^{e_p(\rho)}.
\end{align}
\par
Next we consider the ratio
\begin{align}
\frac{L(0,\chi_w)}{L(1, \chi_w^{-1})} 
\end{align}
appearing in \eqref{eq:3.8.7}. Let $p_w$ be the unique place in the $p$-adic CM type of $\mscr M_0$, and define the polynomial $P_{p_w}(\rho,X)$ to be the polynomial in $X$ such that
\begin{align}
P_{p_w}\left(\rho,p^{-s}\right)^{-1}
\end{align}
is the Euler factor of the Artin $L$-function attached to $\rho$. If $\rho$ is an unramified linear character, then $P_{p_w}(\rho,X) = 1- \rho(\phi_{p_w})X$, where $\phi_{p_w}$ is the (geometric) Frobenius element in the Galois group. By the elementary argument given in the proof of Lemma~4.2 of \cite{CMFT}, we have
\begin{align}
L(0,\chi_w) = P_{p_w}\left(\rho,\frac{\alpha}{p^n}\right)^{-1}.
\end{align}
Similarly, letting $\rho^\wedge$ be the contragredient of $\rho$, we have
\begin{align}
L(1,\chi_w^{-1}) = P_{p_w}\left(\rho^\wedge,\frac{p^{n-1}}{\alpha}\right)^{-1}.
\end{align}
\par
Collecting our formulae, for $\chi$ of the form \eqref{eq:5.4.7}, the right hand side of \eqref{eq:3.8.7} becomes
\begin{align}
L^{p\mfrak C}(\chi,0)\cdot\mcal C_\infty(\chi)
\cdot
\frac{\chi_\mfrak p(2\delta)\delta^{k\Sigma}}{ (\mathrm{Im}(\delta))^{n\Sigma}\epsilon_{\mfrak p }(0,\eta_r\xi_r,\psi_\mfrak p)\sqrt{|D_\mscr F|_\R}}
\cdot
\frac
{\pi^{n\Sigma}\Gamma_\Sigma((k-n)\Sigma)}
{ \Omega^{k\Sigma }}
\cdot \left( \frac{p^n}{\alpha}\right)^{e_p(\rho)}
\cdot
\frac{P_{p_w}(\rho^\wedge,\frac{ p^{n-1}}{\alpha})}{P_{p_w}(\rho,\frac{\alpha}{p^{n}})}
\end{align}
because $\mscr M_r/\mscr F_r$ is unramified everywhere and $\mfrak C_+=1$ by \eqref{Inert}. It becomes
\begin{align}
L^{p\mfrak C}(\chi,0)\cdot\mcal C_\infty(\chi)
\cdot
\frac{\chi_\mfrak p(2\delta)\delta^{k\Sigma}p^{e_p(\rho)}}{ (\mathrm{Im}(\delta))^{n\Sigma}\epsilon_{\mfrak p }(0,\eta_r\xi_r,\psi_\mfrak p)\sqrt{|D_\mscr F|_\R}}
\cdot
\frac
{\pi^{n\Sigma}\Gamma_\Sigma((k-n)\Sigma)}
{ \Omega^{k\Sigma }}
\cdot \left( \frac{p^{n-1}}{\alpha}\right)^{e_p(\rho)}
\cdot
\frac{P_{p_w}(\rho^\wedge,\frac{ p^{n-1}}{\alpha})}{P_{p_w}(\rho,\frac{\alpha}{p^{n}})}
\end{align}
after a slight modification in the second and fourth factor. The $L$-function can be written, using Artin formalism, as
\begin{align}
L^{p\mfrak C}(\chi,0) = L^{pC}(\chi_0,\rho,k-n),
\end{align}
where $L(\chi_0,\rho,s)$ is the $L$-function of $\chi_0$ over $\mscr M_0$ twisted by $\rho$, and $C$ is the norm of $\mfrak C$ to $\mscr M_0$, and the superscript $pC$ means that the Euler factors at primes dividing $pC$ are removed. The first factor $\mcal C_\infty(\chi)$ is simply $1-\alpha p^{2k}$, which is in fact independent of $\rho$. 
\par
The resulting interpolation formula still involves intermediate fields, but the author is not able to give other form of interpolation formula, which looks substantially nicer than this, and would like to give some remarks on the interpolation factors. The last two factors are what appear in the conjectural interpolation formula of non-commutative $p$-adic $L$-function given in \cite{CFKSV}. The first factor, which is a $p$-adic unit, does not appear therein, but appears in the interpolation formula of Hida's three variable Rankin-Selberg $p$-adic $L$-function, which is given in Theorem~1, $\mathsection$~10.4 of \cite{Hida Elementary}. This factor is natural since we worked with all $k\ge 1$, which gives rise to a Hida family. For the second factor, It would be nice to obtain a simpler formula it, but the author does not know a better one. The third factor consists of the expected gamma factor and the the archimedean period, and $\Omega$ can be identified with the Neron period of an elliptic curve up to a $p$-adic unit. Indeed, one can see it from \eqref{eq:5.4.1}, since $\Omega_\infty(\mscr M_0)$ is by definition a Neron period of an elliptic curve, and $\delta_0$ is a $p$-adic unit.

\section*{Acknowledgement}
The author would like to thank his advisor John Coates for giving him warm encouragement throughtout the project. He also would like to thank T. Bouganis, R. Greenberg, M. Hsieh, and M. Kakde for helpful conversations. This work was partially supported by NRF(National Research Foundation of Korea) Grant funded by the Korean Government(NRF 2012 Fostering Core Leaders of the Future Basic Science Program). This work was partially supported by Priority Research Centers Program through the National Research Foundation of Korea(NRF) funded by the Ministry of Education, Science and Technology(2013053914).






\end{document}